\title{Large deviations of Dyson Brownian motion on the circle \\ and multiradial $\SLE_{0+}$}  
\date{}
\author{
Osama Abuzaid\thanks{Department of Mathematics and Systems Analysis, Aalto University, Finland.  \protect\url{osama.abuzaid@aalto.fi}} , \, 
Vivian Olsiewski Healey\thanks{Department of Mathematics, Texas State University, US. \protect\url{healey@txstate.edu} } , \,
and \,
Eveliina Peltola\thanks{Department of Mathematics and Systems Analysis, Aalto University, Finland; and \\ Institute for Applied Mathematics, University of Bonn, Germany.   \protect\url{eveliina.peltola@aalto.fi}}
}
\setlist[enumerate]{topsep = 1ex, leftmargin=1cm, itemsep= -2pt}
\let\OLDthebibliography\thebibliography
\renewcommand\thebibliography[1]{
  \OLDthebibliography{#1}
  \setlength{\parskip}{1pt}
  \setlength{\itemsep}{2pt}
}
\newtheorem{thm}{Theorem}[section]
\newtheorem{cor}[thm]{Corollary}
\newtheorem{lem}[thm]{Lemma}
\newtheorem{prop}[thm]{Proposition}
\newtheorem{theorem}{Theorem}
\newtheorem{lemA}[theorem]{Lemma}
\newtheorem{thmA}[theorem]{Theorem}
\newtheorem{proposition}[theorem]{Proposition}
\newtheorem{corollary}[theorem]{Corollary}
\theoremstyle{definition} 
\newtheorem{df}[thm]{Definition}
\newtheorem{ex}[thm]{Example}
\newtheorem{remark}[thm]{Remark}
\numberwithin{equation}{section}
\numberwithin{figure}{section}
\global\long\def\bR{\mathbb{R}}
\global\long\def\bZ{\mathbb{Z}}
\global\long\def\bN{\mathbb{N}}
\global\long\def\bC{\mathbb{C}}
\global\long\def\bH{\mathbb{H}}
\global\long\def\bD{\mathbb{D}}
\global\long\def\bP{\mathbb{P}}
\global\long\def\cX{\mathcal{X}}
\global\long\def\cL{\mathcal{L}}
\global\long\def\cC{\mathcal{C}}
\global\long\def\ud{\,\mathrm{d}}
\global\long\def\open{O}
\global\long\def\closed{F}
\global\long\def\variable{\, \cdot \,}
\global\long\def\ii{\mathrm{i}}
\renewcommand{\liminf}{\varliminf}
\renewcommand{\limsup}{\varlimsup}
\newcommand{\SLE}{\operatorname{SLE}}
\global\long\def\compsets{\cC}
\global\long\def\one{\scalebox{1.1}{\textnormal{1}} \hspace*{-.75mm} \scalebox{0.7}{\raisebox{.3em}{\bf |}} }
\newcommand{\supnorm}[1]{\lVert #1 \rVert_{[0,\bigT]}}
\newcommand{\metric}[2]{\mathrm{d}_{[0,\bigT]}(#1,#2)}
\newcommand{\metricsymb}{\mathrm{d}_{[0,\bigT]}}
\global\long\def\lenergy{I}
\def\nBessel{J} %multiradial Dirichlet Energy
\def\nDenergy{E} %\def\nDenergy{J^0} %independent Dirichlet Energy
\def\nin{\not \in}
\DeclareMathOperator*{\argmin}{arg\,min}
\def\btheta{\boldsymbol{\theta}}
\def\bvartheta{\boldsymbol{\vartheta}}
\def\bgamma{\boldsymbol{\gamma}}
\def\boldeta{\boldsymbol{\eta}}
\def\bomega{\boldsymbol{\omega}}
\def\blambda{\boldsymbol{\lambda}}
\def\bB{\mathbf{B}}
\def\Dyson{U}
\def\bDyson{\mathbf{\Dyson}^\kappa}
\def\Bessel{\Theta}
\def\bBessel{\mathbf{\Bessel}}
\def\charge{\mathrm{c}}
\def\Borel{H}
\global\long\def\chamber{\cX_n}
\global\long\def\chamberCl{\overline{\cX}_n}
\def\chamberSingle{\cX_1}
\def\Pr{\mathsf{P}}
\def\PrPush{\mathbb{P}}%{\mathsf{Q}}
\def\PrDyson{\Pr^\kappa}
\def\bigT{T} %naming this to make it easier if we need to change the bounds later
\def\prob{\mathbb P}
\def\probSLE{\prob^\kappa}
\def\bE{\mathbb E}
\def\spaceX{X}
\def\spaceY{Y}
\newcommand{\BesselSpace} [2]{C_{#2} \big ([0, #1, \chamber \big)}
\newcommand{\BesselSpaceSingle} [2]{C_{#2} \big ([0, #1, \chamberSingle \big)}
\newcommand{\BesselSpaceSmall} [2]{C_{#2} ([0, #1, \chamber)}
\newcommand{\CameronMartinSpace} [2]{C_{#2} \big ([0, #1, \bR \big)}
\newcommand{\nCameronMartinSpace} [2]{C_{#2} \big ([0, #1, \bR^n \big)}
\newcommand{\HoneSpace} [2]{H^1_{#2} \big ([0, #1, \bR \big)}
\newcommand{\nHoneSpace} [2]{H^1_{#2} \big ([0, #1, \bR^n \big)}
\newcommand{\nWonetwoSpace} [2]{W^{1,2}_{#2} \big ([0, #1, \bR^n \big)}
\newcommand{\nradial}[5]{ \partial_{#3} {#2}_{#3}(#4) = #1 \, #2_{#3}(#4) \sum_{j=1}^n \frac{#5_{#3}+ #2_{#3}(#4)} {#5_{#3} - #2_{#3}(#4)} }
\def\bz{\boldsymbol{z}}
\newcommand{\hullspace}[1]{\mathcal K_{#1}} %the space of hulls in D with capacity nT
\newcommand{\rmultichordspace}{\mathcal A}
\newcommand{\swallowtime}[1]{\tau_{#1}}
\newcommand{\swallowtimehat}[1]{\hat{\tau}_{#1}}
\global\long\def\detblowuptime{\tau_{\mathrm{coll}}}
\global\long\def\blowuptime{\tau_{\mathrm{coll}}}
\global\long\def\blowuptimegen{\tau}
\global\long\def\BLoop{\mu^{\mathrm{loop}}}
\global\long\def\integrand{\varrho}
\newcommand{\Dset}{D}
\newcommand{\meas}[1]{\nu(#1)}
\newcommand{\tildef}{\mathfrak f}
\newcommand{\tildeg}{\mathfrak g}
\newcommand{\tildeh}{\mathfrak h}
\newcommand{\tildep}{\mathfrak p}
\newcommand{\tildeK}{K^{\tildeg}}
\newcommand{\hatg}{g}
\newcommand{\hatw}{z}
\newcommand{\timechange}{ \sigma}
\newcommand{\nbhd}{\mathcal O} %neighborhood of \theta^k_t in $\mathbb H$.
\newcommand{\bignbhd}{\mathcal V}%extend \nbhd by Schwarz reflection
\newcommand{\startheta}{\theta^*}
\newcommand{\zprocess}{y}
\newcommand{\Zprocess}{Y}
\newcommand{\spr}{\mu}
\newcommand{\nBesselSpiral}{\nBessel^{(\spr)}}%{\nBessel^{\rho}}%{\nBessel^{\A,\rho}}
\newcommand{\nBesselSpiralPrime}{\nBessel^{(\spr')}}%{\nBessel^{\rho'}}
\newcommand{\upbd}{R} %{\mathbf M}
\newcommand{\newphi}{\varphi}
\newcommand{\pairderiv}{\hat \newphi}
\newcommand{\partfn}{\mathcal E}%{\mathcal Z}%{\mathcal F}
\newcommand{\PF}{\mathbb P^{\partfn}}
\newcommand{\smoothing}{\varphi_\epsilon}%{g}
\newcommand{\upper}{R(\epsilon)}%{M}
\newcommand{\aux}{\lambda}
\newcommand{\baux}{\blambda}
\newcommand{\nradpartfn}{\mathcal Z_{\mathrm{rad}}}
\newcommand{\newphinrad}{\newphi_{\mathrm{rad}}}
\newcommand{\PhiGen}{\Phi^{\kappa,\partfn}}
\newcommand{\PhiNrad}{\Phi^{\kappa,\nradpartfn}}
\newcommand{\DPotential}{\mathcal U}
\newcommand{\Mob}{f}
\newcommand{\constphi}{\mathtt C} %This is the constant that bounds the deriv of $\hat \varphi$ below $0$. (Sec 4 only)
\newcommand{\constmult}{ \mathtt a} % This is the constant multiplied in middle term of differential inequalities
\newcommand{\constaddnew}{{\mathtt B}}
\newcommand{\lipconst}{L}
\newcommand{\jmin}{j_t}
\newcommand{\jminzero}{j_0}
\renewcommand{\j}{m} %this is to give a different index name when we are showing that the limit exists (static equally spaced config)
\newcommand{\minindexset}{\mathcal A}
\newcommand{\mgle}{M^\kappa}
\newcommand{\DeltaMin}{\delta}
\newcommand{\Pspiral}{\mathbb P^{\kappa, \spr}}
\newcommand{\two}{2}
\newcommand{\ONE}{1}
\begin{document}
\maketitle

\begin{abstract}
	We show a finite-time large deviation principle (LDP) for %a class of 
	``Dyson type'' diffusion processes, %which we call ``Dyson type,''  
	including Dyson Brownian motion on the circle, for a fixed number %$n$ 
	of particles as the coupling parameter $\beta=8/\kappa$ tends to $\infty$. We also characterize the large-time behavior of finite-energy and zero-energy systems. %, which
	 Interestingly, the latter correspond to the Calogero-Moser-Sutherland integrable system.
	
	We use these results to derive an LDP in the Hausdorff metric for multiradial Schramm-Loewner evolution, $\SLE_\kappa$, as $\kappa \to 0$,
	%in the Hausdorff metric 
	with %non-negative 
	good rate function being the multiradial Loewner energy. Here, the main %technical 
	difficulty is that the %$\SLE_\kappa$ 
	curves have a common target point, preventing the %usual 
	configurational %, or global, 
	(global) approach. Our proof thus requires topological results in Loewner theory: 
	using a derivative estimate for the radial Loewner map in terms of the energy of its driving function,
	we show that finite-energy multiradial Loewner hulls are %always 
	disjoint unions of simple curves, except possibly at their common endpoint. 
	%A key to this is a derivative estimate for the radial Loewner map in terms of the energy of its driving function.

\end{abstract}

\newpage

\tableofcontents

\newpage

\section{Introduction}

	The main motivation of the present work is to investigate the asymptotic behavior as $\kappa \to 0+$ of multiradial Schramm-Loewner evolution, $\SLE_\kappa$. We show that this process with the common parameterization satisfies a finite-time large deviation principle (LDP) in the Hausdorff metric with good rate function, the multiradial Loewner energy. We also characterize the large-time behavior of curves with finite energy and zero energy, whose driving functions correspond to the Calogero-Moser-Sutherland integrable system.

	Major parts of this article (Sections~\ref{sec:Bessel_LDP}~\&~\ref{sec:zero-energy}) are of independent interest regardless of SLE theory. \Cref{sec:Bessel_LDP} is devoted to proving a finite-time LDP (\Cref{thm:Bessel_LDP_general}) 
	for a class of diffusion processes that we call ``Dyson type'' (\Cref{def:Dyson_type_potential}),
	including Dyson Brownian motion on the circle, for a fixed number $n$ of particles as the coupling parameter $\beta=8/\kappa$ tends to $+\infty$. To our knowledge, in the literature large deviations of Dyson Brownian motion has only been considered for fixed $\beta$ and as $n$ tends to $+\infty$. While the non-Lipschitz drift precludes the application of the Freidlin-Wentzell theorem, we show that the rate function has the same form as in Freidlin-Wentzell theory for diffusions with uniformly Lipschitz drift  (\Cref{def:multiradial_Dirichlet_energy}).
	However, our analysis uses substantially different tools than Freidlin-Wentzell theory: motivated by the applications to $\SLE$ theory, change of measure plays a prominent role in our arguments.

	In \Cref{sec:zero-energy}, we establish the existence and uniqueness of zero-energy systems
	(\Cref{prop:ODE_existence_uniqueness}).
	Moreover, we analyze the large-time behavior under an additional repulsive assumption
	(\Cref{def:symmetric_Dyson_type_potential}).
	We show in particular that zero-energy systems approach exponentially fast a static equally-spaced configuration (\Cref{thm:U_equally_spaced}). 
	In turn, we show that finite-energy systems converge to an equally-spaced configuration in the long run, but the convergence rate can be slow, and the system may not become static (\Cref{prop:finite_energy}).

	In \Cref{sec:SLE_LDP}, we prove the LDP for multiradial $\SLE_{0+}$ processes 
	(\Cref{thm:radial_LDP_finite_time}).	
	Here, the main technical difficulty is that the $\SLE_\kappa$ curves have a common target point, preventing the usual configurational, or global, approach. 
	Therefore, we instead will
	make careful use of the contraction principle from the LDP for Dyson Brownian motion (\Cref{thm:Bessel_LDP}, a special case of the above LDP for Dyson-type diffusions). 
	We combine it with topological results: namely, 
	we show that finite-energy multiradial Loewner hulls are always disjoint unions of simple curves, except possibly at their common endpoint (\Cref{thm:finite-energy-gives-simple-curves}).
	A key to this is obtained from a derivative estimate for the radial Loewner map in terms of the energy of its driving function (\Cref{thm: FS_estimate}).

	We now provide references and discuss the scope and our main results in more detail.

	\subsection{Background and scope}

Schramm-Loewner evolution $(\SLE_\kappa)_{\kappa \geq 0}$ is a natural model of a random interface arising from two-dimensional conformal geometry.  
$\SLE_\kappa$ curves have two equivalent characterizations: they can be defined in purely geometric and probabilistic terms (as curves satisfying conformal invariance and the domain Markov property), or
they can be defined in terms of
a one-parameter family of slit domains arising from the solutions to the Loewner equation with driving function $\sqrt \kappa \, B$, where $B$ is a standard Brownian motion~\cite{Schramm:Scaling_limits_of_LERW_and_UST}. These two perspectives are often referred to as the ``configurational'' (or ``global'') and ``dynamical'' (or ``local'') interpretations of $\SLE_\kappa$, respectively. 
Their interplay allows for a rich theory that employs tools from diverse disciplines, including 
conformal geometry~\cite{LSW:Conformal_restriction_the_chordal_case, Wang:Energy_of_deterministic_Loewner_chain}, stochastic analysis~\cite{Rohde-Schramm:Basic_properties_of_SLE, Dubedat:Commutation_relations_for_SLE, Sheffield-Miller:QLE}, interacting particle systems~\cite{Cardy:SLE_and_Dyson_circular_ensembles, ABKM:Pole_dynamics_and_an_integral_of_motion_for_multiple_SLE0, Zhang:Multiple_radial_SLE0_and_classical_Calogero-Sutherland_system}, 
Teichm\"uller theory~\cite{Wang:Equivalent_descriptions_of_Loewner_energy, Bishop:Weil_Petersson_curves_conformal_energies_beta-numbers_and_minimal_surfaces}, 
and algebraic geometry~\cite{Peltola-Wang:LDP}.

The roughness of $\SLE_\kappa$ curves depends on a parameter $\kappa\geq 0$.
In particular, for different values of $\kappa$, variants of $\SLE_\kappa$ curves describe scaling limits of interfaces in a variety of statistical physics models
(e.g.,~\cite{LSW:Conformal_invariance_of_planar_LERW_and_UST, Smirnov:Towards_conformal_invariance_of_2D_lattice_models, Schramm:ICM, Schramm-Sheffield:Contour_lines_of_2D_discrete_GFF}). 
The close relationship with discrete statistical physics models also allows discrete intuition and enumerative analysis to inform conjectures about $\SLE_\kappa$ itself, as applied to the theory of multiple SLEs in~\cite{Kozdron-Lawler:Configurational_measure_on_mutually_avoiding_SLEs, BPW:On_the_uniqueness_of_global_multiple_SLEs, Healey-Lawler:N_sided_radial_SLE}. 
Interestingly, 
$\SLE_\kappa$ curves are also very closely related to conformal field theory~\cite{Bauer-Bernard:Conformal_field_theories_of_SLEs, BB:CFTs_of_SLE_radial_case, Friedrich-Werner:Conformal_restriction_highest_weight_representations_and_SLE, Friedrich-Kalkkinen:On_CFT_and_SLE,DRC:Identification_of_the_stress-energy_tensor_through_conformal_restriction_in_SLE_and_related_processes, Kontsevich-Suhov:On_Malliavin_measures_SLE_and_CFT, Dubedat:SLE_and_Virasoro_representations_localization, Peltola:Towards_CFT_for_SLEs}, the Gaussian free field~\cite{Dubedat:SLE_and_free_field, Kang-Makarov:Gaussian_free_field_and_conformal_field_theory, Sheffield-Miller:Imaginary_geometry1, Sheffield:Zipper}, 
and random matrix theory~\cite{Cardy:SLE_and_Dyson_circular_ensembles, CLM:Rate_of_convergence_in_multiple_SLE_using_random_matrix_theory}.

Natural variants of $\SLE_\kappa$ can be constructed from the so-called chordal $\SLE_\kappa$ by change of measure. For instance, multichordal $\SLE_\kappa$ (where each curve connects two distinct boundary points) has been investigated in many works, including~\cite{BBK:Multiple_SLEs_and_statistical_mechanics_martingales, Dubedat:Commutation_relations_for_SLE, Kozdron-Lawler:Configurational_measure_on_mutually_avoiding_SLEs, Lawler:Partition_functions_loop_measure_and_versions_of_SLE, Kytola-Peltola:Pure_partition_functions_of_multiple_SLEs, Peltola-Wu:Global_and_local_multiple_SLEs_and_connection_probabilities_for_level_lines_of_GFF, BPW:On_the_uniqueness_of_global_multiple_SLEs}. 
When $\kappa \in (0,4]$, it is the measure absolutely continuous with respect to the product measure on $n$ independent $\SLE_\kappa$ curves with Radon-Nikodym derivative
\begin{align}\label{eqn:chordal_RN_deriv}
\one \{\gamma^j \cap \gamma^k =\emptyset \textnormal{ for all } j \neq k \} \; \exp\bigg( \frac{\charge}{2} \sum_{i=2}^n \BLoop[L_i(\bgamma)] \bigg),
\end{align}
where $\BLoop[L_i(\bgamma)]$
is the Brownian loop measure of loops that intersect at least $i$ of the curves $\bgamma = (\gamma^1, \ldots, \gamma^n)$, 
and $\charge$ is a parameter known as the \emph{central charge}, 
\begin{align*}
\charge = \frac{(6-\kappa)(3\kappa -8)}{2\kappa}.
\end{align*}
(See, e.g.,~\cite{Kozdron-Lawler:Configurational_measure_on_mutually_avoiding_SLEs, Lawler:Partition_functions_loop_measure_and_versions_of_SLE, Peltola-Wu:Global_and_local_multiple_SLEs_and_connection_probabilities_for_level_lines_of_GFF} for this definition, 
and~\cite{LSW:Conformal_invariance_of_planar_LERW_and_UST} for the construction of Brownian loop measure.)
However, these measures are mutually singular when $\kappa=0$.

Recently, a large deviation principle (LDP) for multichordal $\SLE_\kappa$ as $\kappa \to 0+$ was established in~\cite{Peltola-Wang:LDP}. 
In that result, the convergence takes place in the Hausdorff metric, and the (good) rate function is termed the \emph{multichordal Loewner energy}. 
The  results in~\cite{Peltola-Wang:LDP} have far-reaching applications. The authors show that there is a unique arrangement of curves that minimize the multichordal Loewner energy for given boundary data, and the union of these curves is the real locus of a real rational function, thus providing an alternate proof of the Shapiro conjecture in real enumerative geometry~\cite{Sottile:Real_Schubert_calculus, Eremenko-Gabrielov:Rational_functions_with_real_critical_points_and_Shapiro_conjecture_in_real_enumerative_geometry}. The authors also show that the Loewner potential  (which differs from the Loewner energy by a function of the boundary data) has a simple expression in terms of zeta-regularized determinants of Laplacians (similar to the loop case in~\cite{Wang:Equivalent_descriptions_of_Loewner_energy})
and is the semiclassical $\charge \to -\infty$ limit of certain CFT correlation functions (see~\cite{Dubedat:SLE_and_Virasoro_representations_localization, KKP:Conformal_blocks_q_combinatorics_and_quantum_group_symmetry, Peltola:Towards_CFT_for_SLEs, ABKM:Pole_dynamics_and_an_integral_of_motion_for_multiple_SLE0}).

In the present work, we investigate the asymptotic behavior as $\kappa \to 0+$ of multiradial $\SLE_\kappa$ (a multiple $\SLE$ in the disk where all curves have the origin as their common target point). 
We show that multiradial $\SLE_\kappa$ satisfies a finite-time LDP 
in the Hausdorff metric with good rate function that we call the \emph{multiradial Loewner energy} 
(see Theorem~\ref{thm:radial_LDP_finite_time}). 
Notably, in contrast to the existing literature\footnote{Cf.~\cite{Peltola-Wang:LDP} for the chordal and multichordal cases, as mentioned before. 
Thereafter and coincidently to the present project, a finite-time parameterized single-curve LDP for chordal $\SLE_{0+}$ recently appeared in~\cite{Guskov:LPD_for_SLE_in_the_uniform_topology}, 
and is now extended to infinite time in~\cite{Abuzaid-Peltola:Large_deviations_of_radial_SLE0}, where variants of $\SLE_{0+}$ in this stronger topology are systematically considered.
An LDP for $\SLE_{0+}$ with a force point in the Hausdorff metric was proved in the very recent~\cite{Krusell:in_prep}. 
None of these works address SLE curves reaching a common target point.}, 
establishing an LDP is technically much more difficult in the present multiradial case --- 
for instance because the curves have a common target point. 
For this reason, we shall first develop tools to address an LDP for the corresponding Loewner driving functions, which are also of independent interest.

A key to our approach is that we work with \emph{parameterized} curves. 
This difference in perspective is a result of the different way that multiple $\SLE_\kappa$ is constructed in the radial (in contrast to the chordal) setting.
In the radial setting, the common target point causes essential difficulties for a ``configurational'' approach to defining multiple $\SLE_\kappa$, since the Brownian loop measure in~\eqref{eqn:chordal_RN_deriv} blows up when curves intersect. 
This difficulty was addressed in the construction of multiradial $\SLE_\kappa$ in~\cite{Healey-Lawler:N_sided_radial_SLE}, whose main result is the construction of multiradial $\SLE_\kappa$ for $\kappa \leq 4$ as the solution to the multiradial Loewner equation for driving functions that evolve according to Dyson Brownian motion on the circle\footnote{We use ``\emph{Dyson Brownian motion on the circle}'' to refer to the evolution of points on the circle, while the ``\emph{radial Bessel process}'' refers to the evolution of the arguments of the same process. See Definition~\ref{def:radial_Bessel_DBM}.} 
with a particular repulsive strength
(\Cref{def:n-radial_SLE_driving_functions}, \Cref{rmk:def_justification}, and Section~\ref{subsec: Multiradial Loewner equation background}).

The connection between multiradial $\SLE_\kappa$ and Dyson Brownian motion was first described by Cardy in the physics literature~\cite{Cardy:SLE_and_Dyson_circular_ensembles}. 
Loewner evolution driven by Dyson Brownian motion has gained recent interest in~\cite{Katori:Bessel_Processes_Schramm_Loewner_Evolution_and_the_Dyson_Model, Katori-Koshida:Three_phases_of_multiple_SLE_driven_by_non-colliding_Dyson_Brownian_motions, Chen-Margarint:Perturbations_of_multiple_SLE_with_two_non-colliding_Dyson_Brownian_motions, CLM:Rate_of_convergence_in_multiple_SLE_using_random_matrix_theory, FWW:Multiple_Ising_interfaces_in_annulus_and_2N-sided_radial_SLE}. 
An investigation of chordal Loewner evolution driven by a branching particle system (varying $n$) evolving according to Dyson Brownian motion for $\beta=\infty$ appears in \cite{Healey-Menon:Scaling_limits_of_branching_Loewner_evolutions_and_the_Dyson_superprocess}. 
However, asymptotic results linking $\SLE$ and Dyson Brownian motion have thus far focused on the setting where the number of curves tends to infinity
(cf.~\cite{Monaco-Schleissinger:Multiple_SLE_and_the_complex_Burgers_equation, Hotta-Katori:Hydrodynamic_limit_of_multiple_SLE, Hotta-Schleissinger:Limits_of_radial_multiple_SLE_and_Burgers-Loewner_differential_equation}).

The description of multiradial $\SLE_\kappa$ in terms of the corresponding driving functions provides the key tool in deriving the multiradial Loewner energy (Definition~\ref{def:multiradial_LE}).  
Accordingly, much of the present work is devoted to proving an LDP for a class of diffusion processes with
locally Lipschitz drifts (processes obtained from a potential of ``Dyson type,'' described in Definition~\ref{def:Dyson_type_potential}), 
including Dyson Brownian motion on the circle, 
which is of independent interest (see Theorems~\ref{thm:Bessel_LDP}~\&~\ref{thm:Bessel_LDP_general}).
Therefore, we have organized this article in such a way that, after the introduction of the main concepts and results, 
Sections~\ref{sec:Bessel_LDP}~\&~\ref{sec:zero-energy} only address diffusions and Dyson Brownian motion independently of Loewner theory (thus suitable for readers in a general probability audience), 
while Section~\ref{sec:SLE_LDP} contains our main results in Loewner theory (assuming some familiarity with basic techniques in stochastic analysis and complex geometry). 
We will recall concepts from LDP theory along the way.

\subsection{Dyson Brownian motion, Dyson-Dirichlet energy, and an LDP}

Fix an integer $n\geq 1$. 
Let $(\bR/2\pi\bZ)^n$ be the torus with periodic boundary conditions, 
and let $\chamber$ denote the subset of elements admitting representatives 
$\btheta = (\theta^1, \ldots, \theta^n)$ satisfying
\begin{align} \label{eq: torus ordering}
\theta^1<\theta^2< \cdots< \theta^n <\theta^1+2\pi.
\end{align}
Throughout, we use the convention that $\theta^{n+j} = \theta^j+2\pi$ for all $j$. 
Let $\BesselSpace{\infty)}{}$ denote the space of continuous functions $\btheta_t = (\theta^1_t, \ldots, \theta^n_t)$ from $[0,\infty)$ to $\chamber$. 
We will consider the unique strong solution $\bDyson_t := (\Dyson^1_t, \ldots, \Dyson^n_t)$ in $\BesselSpace{\infty)}{}$ to the system of SDEs
\begin{align}\label{eqn:driving_spde}
\ud \Dyson^j_t = \; & \newphi^j(\bDyson_t) \ud t 
+ \sqrt \kappa \ud W^j_t , \qquad \textnormal{for all } j \in \{1,\ldots,n\} , 
\end{align}
where $W^1_t, \ldots, W^n_t$ are independent standard Brownian motions, 
with drifts 
\begin{align}\label{eqn:def_of_drift}
\newphi^j =\newphi^j_\partfn : = \partial_j \log \partfn = - \partial_j \DPotential, \qquad j \in \{1,\ldots,n\} , 
\end{align}
given by a ``Dyson-type'' potential $\DPotential = - \log \partfn \geq 0$ (Definition~\ref{def:Dyson_type_potential}), 
up to the collision time
\begin{align} \label{eq: blowup time}
\blowuptime := \; & \inf \Big\{t \geq 0 \; \colon \; \min_{1 \leq i < j \leq n} \, \big| e^{\ii \Dyson^i_t} - e^{\ii \Dyson^j_t} \big| = 0 \Big\} .
\end{align}
The existence of a unique strong solution to~\eqref{eqn:driving_spde} for small enough $\kappa$ (large enough $\beta$ for Dyson Brownian motion)
is proven in \Cref{prop:martingale(new)} and \Cref{prop:Bessel_measure} in \Cref{sec:multiBessel_construction}. 

\begin{ex}
In particular, with the specific choices\footnote{Readers familiar with the partition function perspective of SLEs may observe that~\eqref{eq:DBM_choices} agrees with the semiclassical limit 
$\smash{ -\kappa \log \nradpartfn^{(\kappa)} \, \overset{\kappa \to 0+}{\longrightarrow} \, - \log \nradpartfn}$ 
of the multiradial $\SLE_\kappa$ partition function~\eqref{eq:multiradial_partition_function}.} 
\begin{align}\label{eq:DBM_choices}\tag{{\textsc{rad}}}
\begin{split}
\partfn = \;& \nradpartfn^{\two } \quad \textnormal{and} \quad \newphi^j = \newphinrad^j = \two \, \partial_j \log \nradpartfn, \quad \textnormal{where} \\
\nradpartfn(\btheta) := \; & 
\prod_{1\leq j < k \leq n} \sin^2 \bigg(\frac{\theta^k - \theta^j}{2}\bigg) 
= \prod_{1\leq j \neq k \leq n} \sin \bigg | \frac{\theta^k - \theta^j}{2}\bigg | ,
\qquad \textnormal{so}
\\ \quad 
\newphinrad^j(\btheta) =\;&
\two \phi^j(\btheta) := \two \sum_{\substack{1 \leq k \leq n \\[.1em] k\neq j}} \cot \bigg( \frac{\theta^j - \theta^k}{2} \bigg) ,
\end{split}
\end{align} 
the process $(e^{\ii \Dyson^1_t}, \ldots, e^{\ii \Dyson^n_t})$ 
is Dyson Brownian motion on the circle (Section~\ref{subsec: DMB and Bessel background}).
\end{ex}

While using the parameter $\kappa$ in the context of Dyson Brownian motion is non-standard, 
our choice of $\bDyson$ is strongly motivated by its connection to multiradial $\SLE_\kappa$ curves, as discussed in the next section. 
Setting $\kappa=0$ in~\eqref{eqn:driving_spde} motivates the next definition\footnote{This is the usual Dirichlet energy when $n=1$ (see Equation~\eqref{eqn:Dirichlet_energy_def}).}.

\begin{df} \label{def:multiradial_Dirichlet_energy} 
The \emph{Dyson-Dirichlet energy} $\nBessel =\nBessel^\partfn \colon \BesselSpace{\infty)}{} \to [0,+\infty]$ is the limit 
\begin{align*}
\nBessel(\btheta)
:= \; & \lim_{\bigT \to \infty} \nBessel_\bigT(\btheta) \; \in \; [0,+\infty] ,
\qquad \btheta \in \BesselSpace{\infty)}{} ,
\end{align*}
where for each $\bigT > 0$, the \emph{\textnormal{(}truncated\textnormal{)} Dyson-Dirichlet energy} of $\btheta$ is
\begin{align*}
\nBessel_\bigT(\btheta) 
:= \; &
\begin{dcases} 
\tfrac{1}{2} \int_0^\bigT \sum_{j=1}^n \big| \tfrac{\ud}{\ud s} \theta^j_s -  \newphi^j (\btheta_s) \big|^2 \ud s ,
\quad & \textnormal{if } \btheta 
\textnormal{ is absolutely continuous on $[0,\bigT]$,} \\
\infty , & \textnormal{otherwise.}
\end{dcases}
\end{align*}
In the case where the drift is $\newphinrad^j$ given by~\eqref{eq:DBM_choices}, we also call $\nBessel^\partfn = \nBessel^{\nradpartfn^{\two }}$ the \emph{multiradial Dirichlet energy}
and correspondingly, $\nBessel_\bigT$ the \emph{\textnormal{(}truncated\textnormal{)} multiradial Dirichlet energy}.
\end{df}

The first main result of this work is \Cref{thm:Bessel_LDP_general}:
an LDP for the solution $\bDyson$ of~\eqref{eqn:driving_spde} as $\kappa \to 0+$, 
with good\footnote{A \emph{good rate function} is a rate function for which all level sets are compact.} 
rate function being the multiradial Dirichlet energy $\nBessel_\bigT$ 
(Definition~\ref{def:multiradial_Dirichlet_energy}). 
Our result also implies an LDP for Dyson Brownian motion on the circle. 
To state it, for fixed $\bigT \in (0,\infty)$ and $\btheta_0 \in \chamber$ we denote by $\BesselSpace{\bigT]}{\btheta_0}$ the space of continuous functions $\btheta$ from $[0,\bigT]$ to $\chamber$ started at $\btheta_0$.
We endow $\BesselSpace{\bigT]}{\btheta_0}$ with the metric
\begin{align} \label{eqn:sup-metric}
\metric{\btheta}{\bomega} 
:= \sup_{t \in [0,\bigT]} |\btheta_t - \bomega_t| 
= \sup_{t \in [0,\bigT]} \Big( (\theta_t^1-\omega_t^1)^2 + \cdots + (\theta_t^n-\omega_t^n)^2 \Big)^{1/2} .
\end{align}
Then, $\big( \BesselSpace{\bigT]}{\btheta_0}, \metricsymb \big)$ is a Polish space (as a separable complete metric space).

\begin{thm}[LDP for Dyson Brownian motion on the circle]
\label{thm:Bessel_LDP}
Fix $\bigT \in (0,\infty)$. 
Let $\bDyson$ be the unique strong solution to~\eqref{eqn:driving_spde} with drift given by~\eqref{eq:DBM_choices}, started at $\bDyson_0 = \btheta_0 \in \chamber$. 
The family $(\PrDyson)_{\kappa > 0}$ of laws induced by $\bDyson$ satisfies the following LDP in $\BesselSpace{\bigT]}{\btheta_0}$ with good rate function given by the multiradial Dirichlet energy $\nBessel_\bigT$\textnormal{:}

For any closed subset $\closed$ and open subset $\open$ of $\BesselSpace{\bigT]}{\btheta_0}$, we have
\begin{align} 
\label{eq: limsup claim basic}
\limsup_{\kappa \to 0+} \kappa \log \PrDyson \big[ \bDyson \in \closed \big] 
\; & \leq - \inf_{\btheta \in \closed} \nBessel_\bigT(\btheta) ,
\\
\label{eq: liminf claim basic}
\liminf_{\kappa \to 0+} \kappa \log \PrDyson \big[ \bDyson \in \open \big] 
\; & \geq - \inf_{\btheta \in \open} \nBessel_\bigT(\btheta).
\end{align}
\end{thm}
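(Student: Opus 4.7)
The plan is to reduce the LDP to Schilder's theorem for scaled Brownian motion by a Girsanov change of measure, exploiting the key observation that the singular drift is a gradient. Setting
\begin{align*}
H(\btheta) \; := \; 2 \sum_{1 \leq j < k \leq n} \log \big| 2 \sin \big( (\theta^j - \theta^k)/2 \big) \big| ,
\end{align*}
a direct calculation shows $\phi^j(\btheta) = \partial_{\theta^j} H(\btheta)$ on the interior of $\chamber$. Let $\tilde{\mathbb P}^\kappa$ denote the law on $\BesselSpace{\bigT]}{\btheta_0}$ of $\btheta_0 + \sqrt{\kappa}\,\mathbf{W}$, under which the canonical process is just a scaled Brownian motion. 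Girsanov's theorem, combined with It\^o's formula applied to $H(\btheta_t)$ to convert the stochastic integral $\int \phi \cdot \ud \mathbf{W}$ into a pathwise expression, rewrites the Radon--Nikodym derivative up to the collision time $\blowuptime$ of~\eqref{eq: blowup time} as
\begin{align*}
\frac{\ud \PrDyson}{\ud \tilde{\mathbb P}^\kappa} \bigg|_{\mathcal{F}_{t \wedge \blowuptime}}
\; = \; \exp \bigg( \frac{2}{\kappa} \big[ H(\btheta_{t \wedge \blowuptime}) - H(\btheta_0) \big] - \int_0^{t \wedge \blowuptime} \Delta H(\btheta_s) \, \ud s - \frac{2}{\kappa} \int_0^{t \wedge \blowuptime} |\phi(\btheta_s)|^2 \, \ud s \bigg) .
\end{align*}
The essential gain is that the Girsanov exponent is now a continuous pathwise functional of $\btheta$ on paths avoiding $\partial \chamber$.

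To prove the upper bound~\eqref{eq: limsup claim basic}, I would split the event $\{\bDyson \in \closed\}$ according to whether $\blowuptime > \bigT$ or not. The contribution from $\{\blowuptime \leq \bigT\}$ is super-exponentially small in $1/\kappa$ by comparison with the single-particle radial Bessel process. On the complementary event, Schilder's theorem gives an LDP for $\tilde{\mathbb P}^\kappa$ at speed $\kappa$ with good rate $\tfrac{1}{2} \int_0^\bigT |\tfrac{\ud}{\ud s} \btheta_s|^2 \ud s$. The exponent in the Radon--Nikodym density is bounded above --- indeed $H \leq n(n-1)\log 2$ on $\chamber$ and $|\phi|^2 \geq 0$ --- so a Varadhan-type upper bound on the tilted measure yields
\begin{align*}
\limsup_{\kappa \to 0+} \kappa \log \PrDyson[\bDyson \in \closed]
\; \leq \; - \inf_{\btheta \in \closed} \bigg\{ \tfrac{1}{2} \int_0^\bigT \big|\tfrac{\ud}{\ud s}\btheta_s\big|^2 \ud s - 2 \big[ H(\btheta_\bigT) - H(\btheta_0) \big] + 2 \int_0^\bigT |\phi(\btheta_s)|^2 \, \ud s \bigg\} ,
\end{align*}
and the bracketed expression equals $\nBessel_\bigT(\btheta)$ by the chain-rule identity $\int_0^\bigT \tfrac{\ud}{\ud s} \btheta_s \cdot \phi(\btheta_s) \, \ud s = H(\btheta_\bigT) - H(\btheta_0)$, valid for any absolutely continuous $\btheta$ of finite energy (hence avoiding $\partial\chamber$).

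For the lower bound~\eqref{eq: liminf claim basic}, the key preliminary step is to show that any $\btheta$ with $\nBessel_\bigT(\btheta) < \infty$ is uniformly bounded away from $\partial \chamber$: near a collision $\phi$ diverges like $1/\dist$, and the $L^2$-bound implicit in $\nBessel_\bigT(\btheta) < \infty$ forces the trajectory not to approach the singular set. Given $\btheta^* \in \open$ with finite energy, the drift $\phi$ is then Lipschitz in a tubular neighborhood of $\btheta^*$, and a standard tube lower bound --- inserting the Girsanov formula above into a small Schilder tube around $\btheta^* - \btheta_0$ for $\sqrt{\kappa}\,\mathbf{W}$ --- produces trajectories of $\bDyson$ close to $\btheta^*$ with probability at least $\exp\!\big(-\nBessel_\bigT(\btheta^*)/\kappa + o(1/\kappa)\big)$. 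Goodness of $\nBessel_\bigT$ follows from the same uniform separation: each sub-level set $\{\nBessel_\bigT \leq L\}$ is bounded away from $\partial\chamber$ and equicontinuous (via the $L^2$ control on $\tfrac{\ud}{\ud s}\btheta$ together with the Lipschitz bound on $\phi$ on the corresponding compact subset of $\chamber$), hence relatively compact by Arzel\`a--Ascoli.

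The hard part will be precisely these boundary estimates, which distinguish the argument from classical Freidlin--Wentzell. Three interlocking estimates are required: (i) super-exponential smallness of $\PrDyson[\blowuptime \leq \bigT]$ in $1/\kappa$; (ii) uniform separation of the $\nBessel_\bigT$-sublevel sets from $\partial\chamber$; and (iii) sufficient moment control on the Girsanov density to legitimize the Varadhan upper bound. I expect all three to stem from the coercivity $H(\btheta) \to -\infty$ at $\partial\chamber$, which through the Girsanov representation above automatically penalizes trajectories approaching the singular set, combined with quantitative comparisons with the radial Bessel process whose non-attainment of the boundary is classical.
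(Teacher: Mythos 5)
Your strategy --- Girsanov change of measure to scaled Brownian motion, completing the square to identify the rate function, and Varadhan's lemma on top of Schilder's theorem --- is exactly the paper's. The gradient structure $\phi^j = \partial_{\theta^j}H$ and the pathwise Girsanov density agree (up to normalization) with the paper's functional $\PhiA_\bigT$ in Definition~\ref{def:Phi_kappa}, and your chain-rule identity is Lemma~\ref{lem: J as sum of independent and interaction terms}. However, the upper-bound argument has a genuine gap. The split on $\{\blowuptime \le \bigT\}$ is vacuous: the strong solution to~\eqref{eqn:driving_spde} has $\blowuptime = +\infty$ $\PrDyson$-almost surely, so that event has probability zero and the decomposition accomplishes nothing. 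The real difficulty lives on $\{\blowuptime > \bigT\}$: after multiplying the Girsanov exponent by $\kappa$, one gets $2[H(\btheta_\bigT)-H(\btheta_0)] + \tfrac{\kappa}{2}\int_0^\bigT \psi(\btheta_s)\,\ud s - 2\int_0^\bigT|\phi(\btheta_s)|^2\,\ud s$, where $\psi = -2\Delta H \ge 0$. Your justification that ``$H\le n(n-1)\log 2$ and $|\phi|^2\ge 0$'' addresses only the first and third pieces; the middle term $\tfrac{\kappa}{2}\int\psi$ is nonnegative and unbounded on any closed set containing paths coming arbitrarily close to $\partial\chamber$. Since Varadhan's lemma requires a single $\kappa$-independent upper-semicontinuous function bounded from above, one cannot simply plug in a $\kappa$-dependent exponent, and no fixed $\Phi$ that majorizes it on all of $\closed$ will recover $\Phi^{0,A}_\bigT$ in the limit.

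The paper resolves this in two interlocking steps that are the crux of the argument and are absent from your sketch. First, it proves a quantitative tail bound (Lemma~\ref{lem:bad event bound}): with $\tau_\epsilon$ the first time the minimum gap drops below $\epsilon$ (crucially, \emph{not} the collision time), $\PrDyson[\tau_\epsilon\le\bigT]\le C e^{-\upbd(\epsilon)/\kappa}$ with $\upbd(\epsilon)\to\infty$ as $\epsilon\to 0$; the proof is via optional stopping of the Girsanov martingale and the identity~\eqref{eqn:Phi_identity}, not a comparison to a one-particle Bessel process. Second, on $\closed\cap\{\tau_\epsilon>\bigT\}$ the supremum $\Psi := \sup\int_0^\bigT\psi$ is finite, so for $\kappa\le\varepsilon$ the exponent is dominated by the $\kappa$-independent continuous function $\max\{\Phi^{0,A}_\bigT + \tfrac{\varepsilon}{2}\Psi, -M\}$, to which Varadhan applies; one then sends $M\to\infty$, $\varepsilon\to 0$, and finally $\epsilon\to 0$. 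Your step (iii), ``moment control,'' is precisely this truncation-plus-tail estimate, and without making it concrete the upper bound does not follow. The lower bound and goodness of $\nBessel_\bigT$ go through essentially as you describe (the paper also uses Varadhan with a locally modified lower-semicontinuous functional rather than a bare tube bound, but both routes work).
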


This will follow as a special case of our more general result, \Cref{thm:Bessel_LDP_general}, which we discuss in the next section. 
In essence, the proof of the LDP is a careful application of Varadhan's lemma (Lemma~\ref{lemma:Varadhan}) relying on properties of the Dyson-Dirichlet energy (derived in Section~\ref{subsec: Multiradial Dirichlet Energy}), 
which enable us to transport the well-known LDP of Brownian motion from Schilder's theorem (Theorem~\ref{thm:Schilder}),
which is also the basic case of $n=1$ of \Cref{thm:Bessel_LDP}.

\begin{remark} \label{rem:BM_circle_LDP}
Theorem~\ref{thm:Bessel_LDP} is stated for the radial Bessel process $\bDyson$, 
but it is equivalent to an LDP for Dyson Brownian motion on the circle as $\beta = \frac{8}{\kappa}\to \infty$, 
by considering $\exp(\ii \bDyson_t)$ and applying the contraction principle (\Cref{thm:contraction-principle}) 
to the continuous function\footnote{Throughout, we use the principal branch of the logarithm, so that angles are taken to lie in $[0,2\pi)$.} 
$-\ii \log(\variable)$. 
\end{remark}

Large deviation results for Dyson Brownian motion (for fixed $\beta$) as $n\to \infty$ have been considered, e.g., in~\cite{Guionnet-Zeitouni:Large_deviations_asymptotics_for_spherical_integrals},  and are closely connected to random matrix theory. 
In contrast, our Theorem~\ref{thm:Bessel_LDP} holds for fixed $n$ as $\beta \to \infty$, thereby filling a gap in the literature.

The original study of Dyson Brownian motion dates back to~\cite{Dyson:Brownian‐motion_model_for_the_eigenvalues_of_a_random_matrix}, one of the founding articles of random matrix theory.
For fixed $n\in \bN$ and $\beta\geq 1$, Dyson Brownian motion may be defined as the unique strong solution $(X^1_t, \ldots, X^n_t)$ in the Weyl chamber of type $A_{n-1}$,
\begin{align*}
\big\{ (x^1, \ldots, x^n) \in \bR^n \; | \; x^1 < x^2 < \cdots < x^n \big\},
\end{align*} 
to the SDEs
\begin{align}\label{eqn:DBM_real}
\ud X^j_t = \sum_{\substack{1 \leq k \leq n \\[.1em] k\neq j}} \frac{ \ud t }{X^j_t - X^k_t} + \sqrt{\frac{2}{\beta}} \ud W^j_t ,
\qquad \textnormal{for all } j \in \{1,\ldots,n\} .
\end{align}
In the present work, we consider the analogue of this process on the circle,  
where the radial Bessel-type equation~\eqref{eqn:driving_spde} plays the role of Equation~\eqref{eqn:DBM_real}. 
In particular, after a suitable time change (see Section~\ref{subsec: DMB and Bessel background}), 
we see that the relationship between $\beta$ and $\kappa$ is 
\begin{align}\label{eqn:beta_kappa_dependence}
\beta = \frac{8}{\kappa} ,
\end{align}
which matches the prediction of John Cardy from the physics literature~\cite{Cardy:SLE_and_Dyson_circular_ensembles}.

For particular values of $\beta$, Dyson Brownian motion describes the evolution of the ordered eigenvalues of symmetric, Hermitian, and symplectic matrix Brownian motions 
(corresponding to the self-dual Gaussian ensembles GOE, GUE, and GSE, for $\beta=1,2,4$, respectively --- see~\cite[Chapter~4]{AGZ:An_introduction_to_random_matrices}). 
For $\beta=2$, Dyson Brownian motion has the same law as $n$ independent Brownian motions conditioned on nonintersection~\cite{Katori-Tanemura:Noncolliding_Brownian_motions_and_Harish-Chandra_formula}. 
Furthermore, for general $\beta \in (0, +\infty]$ one can construct ensembles of Jacobi matrices whose eigenvalues correspond to~\eqref{eqn:DBM_real}, see~\cite{Dumitriu-Edelman:Matrix_models_for_beta_ensembles, Gorin-Kleptsyn:Universal_objects_of_the_infinite_beta_random_matrix_theory}.  
It would be particularly interesting to investigate the fluctuations near the large deviation limit of the Dyson Brownian motion~\eqref{eqn:DBM_real} in the sense of \Cref{thm:Bessel_LDP} and its relation with the $\beta=+\infty$ process considered in~\cite{Gorin-Kleptsyn:Universal_objects_of_the_infinite_beta_random_matrix_theory}.
Lastly, let us remark that a
new geometric construction of Dyson Brownian motion for general $\beta\in (0,+\infty]$ has recently appeared 
in~\cite{HIM:Motion_by_mean_curvature_and_Dyson_Brownian_motion} --- interestingly, this construction relies on tools from Riemannian geometry and mean curvature flow.

\subsection{General Dyson-type diffusions and their large deviations}

Next, we formulate a more general version of \Cref{thm:Bessel_LDP}: \Cref{thm:Bessel_LDP_general} stated below.
This key result is of independent interest, and will be useful, e.g., in applications to $\SLE$ variants. 
To state it, we need the following definitions (used throughout for the SDEs~\eqref{eqn:driving_spde}). 

\begin{df} \label{def:Dyson_type_potential} 
We say that a function $\DPotential \in C^2(\chamber, [0,\infty))$ is a \emph{Dyson-type potential} if 
\begin{itemize}
\item
the function $\partfn := \exp(-\DPotential) \in C^2(\chamber, (0,1])$ satisfies the asymptotic repulsive behavior
\begin{align}\label{eqn:growth_conditions}\tag{{\textsc{rep}}}
\lim_{\epsilon \to 0} \max_{\btheta \in \partial \chamber^\epsilon} \partfn(\btheta) = 0 
\qquad \textnormal{and}  \qquad 
\min_{\btheta \in \chamberCl^\epsilon} \partfn(\btheta) > 0, 
\end{align}
where $\chamber^\epsilon := \{\btheta\in \chamber \; | \; \DeltaMin_{\btheta} > \epsilon \}$, for $\epsilon > 0$, and $\smash{\DeltaMin_{\btheta} 
:= \underset{1\leq j\leq n}{\min} \, \big| \theta^{j+1} - \theta^j \big| \in \big[0,  \tfrac{2\pi}{n} \big]}$; 

\item 
and there exist constants $\constmult>0$ and $\constaddnew \geq 0$ such that 
\begin{align} \label{eq:differential_inequalities}\tag{{\textsc{de}}}
- \constaddnew \le \constmult\, \Delta \DPotential(\btheta) \le | \nabla \DPotential (\btheta) |^2 + \constaddnew , \qquad \textnormal{for all }  \; \btheta \in \chamber.
\end{align}
\end{itemize}
\end{df}

We have not seen \Cref{def:Dyson_type_potential} in the literature. 
Our motivation to refer to the potentials $\DPotential$ in it as ``Dyson-type'' stems from our application to Dyson Brownian motion. 
They could also be termed (one-dimensional) ``Log-gas type'' or ``electrostatic'' potentials, say, due to their similarity with 
Coulomb gas literature (see~\cite{Serfaty:Lectures_on_Coulomb_and_Riesz_gases} and references therein).

As the condition~\eqref{eq:differential_inequalities} may not appear natural at first, let us briefly comment on its significance. 
Roughly speaking, away from singularities the derivatives $\newphi^j = -\partial_j \DPotential$ of the potential $\DPotential$ --- yielding the drift~\eqref{eqn:def_of_drift} in the SDE~\eqref{eqn:driving_spde} --- are locally Lipschitz.
The upper bound in~\eqref{eq:differential_inequalities} together with the asymptotics in~\eqref{eqn:growth_conditions} guarantees 
a (non-Lipschitz) repulsion of strength at most of order $1/\theta$ at particle collisions (see \Cref{fig:example_plot} and \Cref{ex:differential_inequalities_Phi}). 
In turn, the lower bound in~\eqref{eq:differential_inequalities} prevents non-Lipschitz attraction of the particles. 
One may also think of the Laplacian $\Delta \DPotential$ as the mean curvature of the graph of $\DPotential$. 
Then, the lower bound in~\eqref{eq:differential_inequalities} gives a lower bound for the mean curvature.
(Note also that requiring both a constant upper and lower bound in~\eqref{eq:differential_inequalities} would yield a Lipschitz constraint that would make the potential to fall into the scope of Freidlin-Wentzell theory.)

\begin{figure}
\centering
\includegraphics[width=0.8\textwidth]{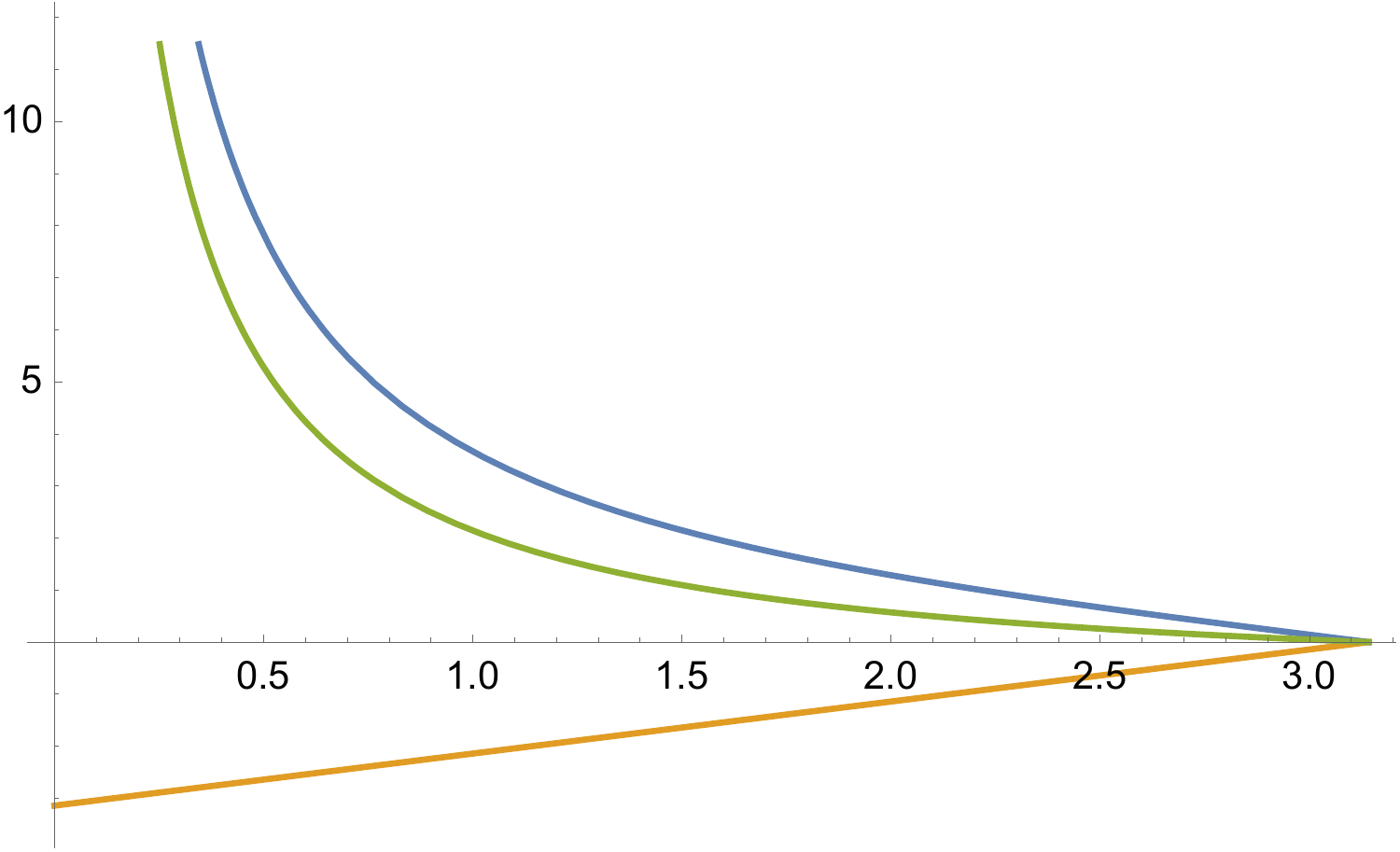}
\caption{\label{fig:example_plot} 
Plots of the functions $\theta \mapsto 2\cot(\frac{\theta}{2})$ (blue), $\theta \mapsto \theta - \pi$ (orange), and $\theta \mapsto \frac{\pi - \theta}{\theta}$ (green) 
in \Cref{ex:differential_inequalities_Phi}. 
}
\end{figure}

Before stating the result, let us discuss a couple of examples that concretely illustrate its scope.
\Cref{ex: differential_inequalities} concerns the particular choice used in our application to SLE theory,
and \Cref{ex:differential_inequalities_Phi} together with \Cref{fig:example_plot} illustrates the assumptions in \Cref{def:Dyson_type_potential}.

\begin{ex} \label{ex: differential_inequalities}
If $\partfn=\nradpartfn$ as in~\eqref{eq:DBM_choices}, condition~\eqref{eq:differential_inequalities} holds for $\constmult=2$ and $\constaddnew=\frac{n(n^2-1)}{3}$,
with equality in the lower bound, as in~\cite[Lemma~5.1]{Healey-Lawler:N_sided_radial_SLE}:
\begin{align}\label{eqn:trig_identity}
- 2 \sum_{j=1}^n \partial_j  \phi^j(\btheta) 
= \; &
\sum_{j=1}^n \sum_{\substack{1 \leq k \leq n \\[.1em] k\neq j}} 
\csc^2 \bigg( \frac{ \theta^j - \theta^k }{2} \bigg) \\
\; = \; & 
\sum_{j=1}^n \bigg(\sum_{\substack{1 \leq k \leq n \\[.1em] k\neq j}} 
\cot \bigg( \frac{ \theta^j - \theta^k }{2} \bigg)\bigg)^2
+ \frac{n(n^2-1)}{3} 
\; = \; \sum_{j=1}^n \big(\phi^j(\btheta)\big)^2 + \frac{n(n^2-1)}{3}.
\nonumber
\end{align}
Similarly, if $\partfn=\nradpartfn^2$, condition~\eqref{eq:differential_inequalities} holds for $\constmult=4$ and $\constaddnew=\frac{4n(n^2-1)}{3}$. 
\end{ex}

\begin{ex}\label{ex:differential_inequalities_Phi}
To better understand the behavior of functions that satisfy the differential inequalities~\eqref{eq:differential_inequalities}, let us consider a function $\varphi(\theta)$ satisfying, for some $B \geq 0$, the bounds
\begin{align} \label{eq:differential_inequalities_Phi}
-B \le -4 \, \varphi'(\theta) \le \big(\varphi(\theta)\big)^2 + B , \qquad \theta \in (0,\pi) .
\end{align}
Let us assume the boundary value $\varphi(\pi) = 0$ so that the maximal distance $\theta = \pi$ of two particles at $0$ and $\theta$ is a critical point for their dynamics.
Then, the upper bound in~\eqref{eq:differential_inequalities_Phi} is satisfied by the solution 
$F(\theta) = \sqrt{B} \tan \big(\frac{\sqrt{B}}{4} (\pi - \theta)\big)$ to the initial value problem (IVP)
\begin{align*}
- 4  \, F'(\theta) = F(\theta)^2 + B , \qquad F(\pi) = 0 ;
\end{align*}
and the lower bound in~\eqref{eq:differential_inequalities_Phi} is satisfied by the solution 
$G(\theta) = \frac{B}{4} \, \big( \theta-\pi \big)$ to the IVP
\begin{align*}
4 \, G'(\theta)= B , \qquad G(\pi) = 0 .
\end{align*}
In particular, by general theory of differential inequalities (see, e.g.,~\cite[Chapter~1]{Lakshmikantham-Leela:Differential_and_integral_inequalities}), 
the function $\varphi(\theta)$ has the behavior (illustrated in \Cref{fig:example_plot})
\begin{align*}
\tfrac{B}{4} \, \big( \theta-\pi \big)
\leq \varphi(\theta) \leq \sqrt{B} \tan \big(\tfrac{\sqrt{B}}{4} (\pi - \theta)\big)  , \qquad \theta \in [0,\pi] .
\end{align*}
Writing $\varphi(\theta) = \two \cot( \tfrac{\theta}{2})$, 
we see that for the potential obtained from $\partfn=\nradpartfn^\two$ as in~\eqref{eq:DBM_choices}, we have 
$\varphi(\theta) = \phi^2(\theta) = -\phi^1(\theta)$, so the inequalities~\eqref{eq:differential_inequalities} with $\constmult=4$ and $\constaddnew=8$ 
read\footnote{Let us also observe that choosing other values for $B$ in~\eqref{eq:differential_inequalities_Phi} may give different behavior as $\theta \to 0+$. 
From~\eqref{eqn:growth_conditions} we expect that $\varphi(0+) = +\infty$, which singles out the choices for $B$ such that 
$\tfrac{\sqrt{B}}{2} = m = 1,3,5,\ldots$ is an odd integer.
If we also require that $\varphi(\theta) \in (-\infty, +\infty)$ for all $\theta \in (0,\pi)$, the only possible choice is $B=4$. 
In turn, if we require that $\varphi(0+) \in (-\infty, +\infty)$, too, then 
for all $B \in (0,4)$, the function $F(\theta) = \sqrt{B} \tan \big(\frac{\sqrt{B}}{4} (\pi - \theta)\big)$ is uniformly Lipschitz continuous, with Lipschitz constant diverging as $B \to 4+$.
Such functions cannot satisfy the repulsive asymptotics~\eqref{eqn:growth_conditions}, though.}
\begin{align*}
-8 \leq -8\, \varphi'(\theta) \leq 2(\varphi(\theta))^2 + 8 , \qquad \theta \in (0,\pi) .
\end{align*}
Dividing these by $2$, we obtain~\eqref{eq:differential_inequalities_Phi} with $B=4$. 
Note also that $\two \cot( \tfrac{\theta}{2}) = 2 \, \tan \big(\frac{\pi - \theta}{2}\big)$.
\end{ex}

We can now state the general LDP for Dyson-type potentials ---
the LDP for the radial Bessel process (\Cref{thm:Bessel_LDP}) is the special case when $\partfn=\nradpartfn^{\two}$ and $\newphi^j_\partfn=\newphinrad^j$ as in~\eqref{eq:DBM_choices}. 

\begin{restatable}{thm}{BesselLDPgeneral}\textnormal{(LDP for Dyson-type diffusions)\textbf{.}}
\label{thm:Bessel_LDP_general}
Fix $\bigT \in (0,\infty)$. 
Let $\DPotential=-\log \partfn$ be a Dyson-type potential
\textnormal{(}\Cref{def:Dyson_type_potential}\textnormal{)}, 
and let $\bDyson$ be the associated unique strong solution to~\eqref{eqn:driving_spde}, started at $\bDyson_0 = \btheta_0 \in \chamber$. 
The family $(\PrDyson)_{\kappa > 0}$ of laws induced by $\bDyson$ satisfies the LDP~\textnormal{(}\ref{eq: limsup claim basic},~\ref{eq: liminf claim basic}\textnormal{)} in $\BesselSpace{\bigT]}{\btheta_0}$ with good rate function~$\nBessel_\bigT$ \textnormal{(}\Cref{def:multiradial_Dirichlet_energy}\textnormal{)}.
\end{restatable}

Large deviation theory for stochastic differential equations (or SPDEs) 
is a field of independent interest. Theorem~\ref{thm:Bessel_LDP_general} fits very naturally in this context. 
The Dyson-Dirichlet energy $\nBessel_\bigT$  
is exactly the rate function that would be predicted by applying Freidlin-Wentzell theory to the diffusion~\eqref{eqn:driving_spde} (see~\cite{Freidlin-Wentzell:Random_Perturbations_of_Dynamical_Systems}, originally published in Russian in 1979, and~\cite{Dembo-Zeitouni:Large_deviations_techniques_and_applications} for a survey). 
However, since the drift in~\eqref{eqn:driving_spde} is not uniformly Lipschitz continuous, the Freidlin-Wentzell theorem does not apply directly. 
Our Theorem~\ref{thm:Bessel_LDP_general} can thus be considered as an extension of the Freidlin-Wentzell theorem to a class of diffusions with non-Lipschitz drift.
However, our analysis uses substantially different tools than Freidlin-Wentzell theory: motivated by the applications to $\SLE$ theory, change of measure plays a prominent role in our arguments. 
We implement this via the interaction potentials described in \Cref{def:Dyson_type_potential}.

Our results also immediately yield an LDP for diffusions including a linear drift $\spr \ud t$, which are used 
in~\cite{Sheffield-Miller:Imaginary_geometry4, KWW:Commutation_relations_for_two-sided_radial_SLE, HPW:Multiradial_SLE_with_spiral} to study
$\SLE_\kappa$ curves with ``spiraling rate'' $\spr \in \bR$. 
Note that the potential associated to these processes is not bounded from below.

\begin{restatable}{cor}{BesselLDPspiral}\textnormal{(LDP for Dyson Brownian motion on the circle with spiral)\textbf{.}}
\label{cor:Bessel_LDP_spiral}
Fix $\bigT \in (0,\infty)$ and $\spr \in \bR$. 
Let $\bDyson$ be the unique strong solution to
\begin{align}\label{eqn:diffusion_in_new_measure_spiral}
\ud \Dyson^j_t = \big(2\phi^j(\bDyson)  + \spr \big)\ud t  + \sqrt{\kappa} \ud W^j_t  , \qquad \textnormal{for all } j \in \{1,\ldots,n\} ,
\end{align}
with drift involving $\phi^j$ as in~\eqref{eq:DBM_choices}, 
started at $\bDyson_0 = \btheta_0 \in \chamber$. 
The family $(\Pspiral)_{\kappa > 0}$ of laws induced by $\bDyson$ satisfies the LDP~\textnormal{(}\ref{eq: limsup claim basic},~\ref{eq: liminf claim basic}\textnormal{)} in $\BesselSpace{\bigT]}{\btheta_0}$ with good rate function
\begin{align*}
\nBesselSpiral_{\bigT}(\btheta) 
:= \; &
\begin{dcases} 
\tfrac{1}{2} \int_0^\bigT \sum_{j=1}^n \big| \tfrac{\ud}{\ud s} \theta^j_s - \big( 2 \phi^j(\btheta_s) + \spr \big)\big|^2 \ud s ,
\quad & \textnormal{if } \btheta 
\textnormal{ is absolutely continuous on $[0,\bigT]$,} \\
\infty , & \textnormal{otherwise,}
\end{dcases}
\end{align*}
associated to\footnote{Readers familiar with the partition function perspective of SLEs may observe that this agrees with the semiclassical limit 
$\smash{ -\kappa \log \big( \exp\big(\frac{\spr}{\kappa} \sum_{j} \theta^j \big) \nradpartfn^{(\kappa)} \big)
\, \overset{\kappa \to 0+}{\longrightarrow} \, - \log\partfn_{\mathrm{rad}}^{(\spr)} }$ 
of the multiradial $\SLE_\kappa^\spr$ partition function.}  
the potential $\DPotential_{\mathrm{rad}}^{(\spr)} = - \log \partfn_{\mathrm{rad}}^{(\spr)}$ defined by
\begin{align}\label{eq:spiral_pf}
\partfn_{\mathrm{rad}}^{(\spr)} 
:= \nradpartfn^\two(\btheta)\exp\bigg(\spr \sum_{j=1}^n \theta^j \bigg) 
\, = \,\underset{1\leq i < j \leq n}{\prod} \, \sin^{4} \bigg(\frac{\theta^j - \theta^i}{2}\Big) \, \exp \Big( {\spr}\sum_{j=1}^n\frac{ \theta^j}{2} \bigg) .
\end{align}
\end{restatable}

\begin{proof}
This follows from \Cref{thm:Bessel_LDP}, e.g., by applying the contraction principle (\Cref{thm:contraction-principle}) to the continuous map sending the function $t \mapsto \btheta_t$ to the function $t \mapsto \btheta_t + \spr \, t$.
\end{proof}
An analogue of \Cref{cor:Bessel_LDP_spiral} of course also holds with more general drifts $(\newphi^j + \spr) \ud t$.

\subsection{Large deviations of multiradial $\SLE_{0+}$}

We will mainly work on Loewner flows in the following setup. 
The \emph{multiradial Loewner equation} with the common parameterization is 
the solution to the boundary value problem
\begin{align}\label{eqn:multiradial_Loewner_1common}
\nradial{}{g}{t}{z}{z^j}, 
\qquad g_0(z) = z ,
\qquad z \in \overline{\bD} , \quad t \geq 0 ,
\end{align}
where $z^1_t, \ldots, z^n_t \in \partial \bD$ are cyclically ordered points on the unit circle, called the \emph{driving functions} (drivers).
It is most common to study~\eqref{eqn:multiradial_Loewner_1common} for drivers which are non-intersecting and continuous in time, in which case the maps $g_t$ 
that satisfy~\eqref{eqn:multiradial_Loewner_1common} generate a locally growing family of compact subsets $K_t$ of $\overline{\bD}$ 
(hulls\footnote{A hull is a compact set $K \subset \overline{\bD}$ such that $\bD\smallsetminus K$ is simply connected, $0 \in \bD\smallsetminus K$, and $\overline{K \cap \bD} = K$.}) satisfying $g_t(\bD\smallsetminus K_t)=\bD$. 
In fact, each $g_t \colon \bD\smallsetminus K_t \to \bD$ 
is the unique conformal mapping that satisfies $g_t(0)=0$ and $g_t'(0)>0$. 
Throughout, we refer to this map as the \emph{uniformizing map normalized at the origin}.

The parameterization in~\eqref{eqn:multiradial_Loewner_1common} guarantees that $g_t'(0) = e^{nt}$. 
If each hull $K_t$ is a union of $n$ disjoint connected components (e.g., as in Figure~\ref{fig:spider}), the ``common parameterization'' implies that, roughly, 
each component is locally growing at the same rate (see~\cite{Healey-Lawler:N_sided_radial_SLE} for more details). 
See also Equation~\eqref{eqn:multiradial_Loewner_general} for a more general case involving weights.

It will be convenient to use the angle coordinates $z^j_t = \exp(\ii \theta^j_t)$, where the driving function becomes $\btheta = (\theta^1, \ldots, \theta^n) \in \BesselSpace{\infty)}{}$. 
For each $t \geq 0$, 
the \emph{Loewner transform} 
$\cL_t \colon \BesselSpace{t]}{} \to \compsets$ sends driving functions to hulls, 
\begin{align} \label{eq: Loewner transform}
K_t = \cL_t(\btheta) 
:= \; & \{z \in \overline{\bD} \; \colon \; \swallowtime{z} \le t\} \; \subset \; \compsets ,
\end{align}
where $\compsets$ is the set of non-empty  compact subsets of $\overline{\bD}$, and
$\swallowtime{z}$ is the swallowing time of $z$,
\begin{align*}
\swallowtime{z} := \; & \sup \Big\{t \geq 0 \; \colon \;  \inf_{s \in [0,t]} \, \min_{1 \leq j \leq n} \, \big| g_s(z) - e^{\ii \theta_s^j} \big| > 0 \Big\} \; \in \; [0,\infty] .
\end{align*}
We endow $\compsets$ with the Hausdorff metric $d_{\compsets} \colon \compsets \times \compsets \to [0,\infty)$ defined by
\begin{align} \label{eq: Hausdorff metric}
d_{\compsets}(K, K') 
: = \inf \big\{r > 0 \; \colon \; K \subset \mathcal{B}_{K'}(r) \textnormal{ and } K' \subset \mathcal{B}_K(r)\big\},
\end{align}
where $\mathcal{B}_K(r) := \displaystyle{\bigcup_{x \in K}} \mathcal{B}_x(r)$. 
Then, $(\compsets,d_{\compsets})$ is a compact metric space.

\begin{df} \label{def:radial_multichord}
Fix distinct points $x^1, \ldots, x^n \in \partial \bD$. 
We call an $n$-tuple $\bgamma = (\gamma^1, \ldots, \gamma^n)$ such that $\gamma^1, \ldots, \gamma^n$ are curves\footnote{Note that the definition of a radial multichord allows the curves to intersect.}
 in $\overline{\bD}$ with  
$\gamma^j(0) = x^j$, and $\lim_{t\to \infty} \gamma^j_t=0$ for each $j$, 
a~\emph{radial multichord} in $(\bD;x^1, \ldots, x^n)$. 
We naturally identify $\bgamma$ with the union $\cup_j \gamma^j \in \compsets$. 
\end{df}

\begin{df} \label{def:multiradial_LE}
For each $\bigT \in (0,\infty)$, 
we define the \emph{\textnormal{(}truncated\textnormal{)} multiradial Loewner energy} of a radial multichord to be the multiradial Dirichlet energy $\nBessel_\bigT$ 
of its driving function in the common parameterization (as in Definition~\ref{def:multiradial_Dirichlet_energy} with drift given by~\eqref{eq:DBM_choices}). 
More generally, we define the energy functional 
$\lenergy_\bigT \colon \compsets \to [0,+\infty]$ 
on the metric space $(\compsets, d_{\compsets})$ by 
\begin{align}\label{eqn:nradial-energy}
\lenergy_\bigT(K) := \inf_{\btheta \in \cL_\bigT^{-1}(K)} \nBessel_\bigT(\btheta) , \qquad \bigT \in (0,\infty) , \quad K \in \compsets .
\end{align}
Note that $\lenergy_\bigT(K)=\infty$ if there is no driving function that generates $K$ in time $\bigT$  in the common parameterization. 
We can also define the energy in a general domain $D$ by conformal equivalence using a conformal mapping $\Mob \colon D \to \bD$ with $\Mob(0)=0$ and $\Mob'(0)>0$:
\begin{align*}
\lenergy_\bigT(\tilde{K}; D) := \lenergy_\bigT(K) ,
\qquad \textnormal{where $\tilde{K} \subset \overline{D}$ and $\Mob(\tilde{K})=K \subset \overline{\bD}$.}
\end{align*}
\end{df}

We next give the definition of $n$-radial $\SLE_\kappa$ that we will use for the remainder of this work. 
\Cref{rmk:def_justification2} and the discussion following it in Section~\ref{subsec: Multiradial Loewner equation background} offer additional justification for this definition and comparison to~\cite{Healey-Lawler:N_sided_radial_SLE}.

\begin{df}\label{def:n-radial_SLE_driving_functions}
Fix $\btheta_0 \in \chamber$ and $\bz_0 = (e^{\ii \theta^1_0}, \ldots, e^{\ii \theta^n_0})$. 
For each parameter $0<\kappa\leq 4$, 
\emph{$n$-radial $\SLE_\kappa$ with the common parameterization} started from $\bz_0$ 
is the random radial multichord $\bgamma$ for which the uniformizing conformal maps $g_t \colon \bD \smallsetminus \bgamma[0,t] \to \bD$ satisfy  Equation~\eqref{eqn:multiradial_Loewner_1common}
with drivers $z^j_t = e^{\ii \Dyson^j_t}$ for $1 \leq j \leq n$, 
where $\bDyson_t = (\Dyson^1_t, \ldots, \Dyson^n_t)$ is the unique strong solution in $\BesselSpace{\infty)}{\btheta_0}$ to the SDEs~\eqref{eqn:driving_spde} 
with drift given by~\eqref{eq:DBM_choices}, started at $\bDyson_0 = \btheta_0$.
Note that in this case, the collision time~\eqref{eq: blowup time} is almost surely infinite, i.e., $\blowuptime = \infty$
(see \Cref{prop:Bessel_measure}).
(Compare with~\eqref{eqn:DBM_real} for $\beta\geq 1$~\cite{AGZ:An_introduction_to_random_matrices}.) 
\end{df}

A key aspect of Definition \ref{def:n-radial_SLE_driving_functions} is the specific weight ``$2$'' in front of the drift term $\newphinrad^j = \two \phi^j $ in~\eqref{eq:DBM_choices}. 
This is the drift strength that appears when considering large-time $\bigT$ truncations of the chordal Radon-Nikodym derivative~\eqref{eqn:chordal_RN_deriv} 
when all curves are growing simultaneously (in the common parameterization) and then taking $\bigT \to \infty$, as in \cite{Healey-Lawler:N_sided_radial_SLE}.  
Other values of this weight give rise to other variants of $\SLE$, including so-called ``locally independent'' $\SLE_\kappa$ when the drift is instead multiplied by ``$1$'', see~\cite{Healey-Lawler:N_sided_radial_SLE}. 
In Section~\ref{subsec: Multiradial Loewner equation background}, we discuss Loewner evolutions with various weight functions.

\begin{remark} \label{rmk:def_justification}
The drift strength $\newphinrad^j =\two \phi^j $ for simultaneously growing multiradial $\SLE_\kappa$ 
can also be derived %in another way 
via the superposition of the individually growing (marginal) processes 
in the following manner (see also~\cite{Graham:Multiple_SLEs, Healey-Lawler:N_sided_radial_SLE}). 
The multiradial $\SLE_\kappa$ partition function~\cite{{Cardy:SLE_and_Dyson_circular_ensembles}} is 
\begin{align}\label{eq:multiradial_partition_function}
\nradpartfn^{(\kappa)}(\btheta) := 
\big ( \nradpartfn (\btheta) \big)^{1/\kappa} =
\underset{1\leq i < j \leq n}{\prod} \, \sin^{2/\kappa} \Big(\frac{\theta^j - \theta^i}{2}\Big).
\end{align} 
Growing one curve at a time (in this case $\gamma^j$)  yields the marginal dynamics
\begin{align}\label{eq:indiv_dynamics}
\begin{aligned}
\ud \theta^j_t &= \kappa \, \partial_j \log \nradpartfn^{(\kappa)}(\btheta) \ud t + \sqrt{\kappa} \ud W^j_t
=  \phi ^j (\btheta) \ud t + \sqrt{\kappa} \ud W^j_t,\\
\ud \theta^i_t&= \cot \bigg( \frac{\theta^i_t-\theta^j_t}{2}\bigg) \ud t, \quad i\neq j , 
\end{aligned}
\end{align}
--- see~\cite{Dubedat:Commutation_relations_for_SLE, Lawler:Partition_functions_loop_measure_and_versions_of_SLE, Peltola-Wu:Global_and_local_multiple_SLEs_and_connection_probabilities_for_level_lines_of_GFF} for this point of view. 
Simultaneous growth in the common parameterization is obtained by the superposition of these dynamics (as in, e.g.,~\cite{BBK:Multiple_SLEs_and_statistical_mechanics_martingales, ABKM:Pole_dynamics_and_an_integral_of_motion_for_multiple_SLE0}). 
In particular, to determine the flow of $\theta^\ell$ in the common parameterization, we will sum the systems~\eqref{eq:indiv_dynamics} over $j=1, \ldots n$ and collect all of the $\ud\theta^\ell_t$ terms: if $j=\ell$, the first line contributes a drift of $\phi^\ell (\btheta)$, while for each $j\neq \ell$ the second line contributes a single term $\cot \big( (\theta^\ell_t-\theta^j_t)/2\big)$. 
Adding these up, we obtain an SDE of the same form as~\eqref{eqn:driving_spde}: 
\begin{align}
\begin{split}
\ud \theta^\ell_t &=\phi^\ell (\btheta) \ud t + \sqrt{\kappa} \ud W^\ell_t+ \sum_{\substack{1 \leq j\leq n \\[.1em] j\neq \ell}}  \cot \bigg( \frac{\theta^\ell_t-\theta^j_t}{2}\bigg) \ud t \\
\label{eqn:driving_spde_radial}
&= 2\phi^\ell (\btheta) \ud t + \sqrt{\kappa} \ud W^\ell_t , \qquad  \textnormal{for all } \ell \in \{1,\ldots,n\} ,
\end{split}
\end{align}
with the weight ``$2$'' appearing. 
\end{remark}

Our second main result is the following finite-time LDP for multiradial $\SLE_{0+}$.

\begin{thm}[LDP for multiradial $\SLE$]\label{thm:radial_LDP_finite_time}
Fix $\bigT \in (0,\infty)$. 
The initial segments $\bgamma^\kappa_{[0,\bigT]} \in \compsets $ of multiradial $\SLE_\kappa$ curves 
satisfy the following LDP in $\compsets$ with good rate function $\lenergy_\bigT$\textnormal{:}

For any Hausdorff-closed subset $\closed$ and Hausdorff-open subset $\open$ of $\compsets$, we have
\begin{align} 
\label{eq: limsup claim curves}
\limsup_{\kappa \to 0+} \kappa \log \probSLE \big[ \bgamma^\kappa_{[0,\bigT]} \in \closed \big] 
\; & \leq - \inf_{K \in \closed} \lenergy_\bigT (K) ,
\\
\label{eq: liminf claim curves}
\liminf_{\kappa \to 0+} \kappa \log \probSLE \big[ \bgamma^\kappa_{[0,\bigT]} \in \open \big] 
\; & \geq - \inf_{K \in \open} \lenergy_\bigT (K) .
\end{align}
\end{thm}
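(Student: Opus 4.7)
My plan is to derive Theorem~\ref{thm:radial_LDP_finite_time} from Theorem~\ref{thm:Bessel_LDP} by a contraction-principle argument. Since $\bgamma^\kappa_{[0,\bigT]} = \cL_\bigT(\bDyson|_{[0,\bigT]})$, if the Loewner transform $\cL_\bigT \colon \BesselSpace{\bigT]}{\btheta_0} \to \compsets$ were continuous from the uniform topology to the Hausdorff topology, then the standard contraction principle applied to the LDP from Theorem~\ref{thm:Bessel_LDP} would immediately yield the desired LDP with rate function $\inf_{\btheta \in \cL_\bigT^{-1}(K)} \nBessel_\bigT(\btheta) = \lenergy_\bigT(K)$, matching Definition~\ref{def:multiradial_LE}. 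The essential obstruction is that $\cL_\bigT$ is \emph{not} globally continuous between these topologies: a small uniform perturbation of the driving function can drastically alter the generated hull if tips collide, get swallowed, or change topology.

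My remedy is to prove that continuity does hold on the compact sublevel sets $\{\nBessel_\bigT \leq c\}$ of the rate function. The key geometric ingredient, announced in the abstract, is a derivative estimate bounding $|g_t'|$ near each marked tip in terms of $\nBessel_\bigT(\btheta)$. I would obtain it by adapting the Cauchy--Schwarz argument used for chordal Loewner energy to the present multiradial setting, rewriting $\nBessel_\bigT(\btheta)$ as a quadratic control on the velocity $\tfrac{d}{ds}\theta^j_s - 2\phi^j(\btheta_s)$ and propagating it through the multiradial backward flow. Such a bound prevents branches from swallowing each other and shows that finite-energy hulls are disjoint simple arcs meeting only at the origin, matching the topological statement highlighted in the abstract. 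A standard stability-of-ODE estimate for~\eqref{eqn:multiradial_Loewner_1common}, using this control on $|g_t'|$, then upgrades the finite-energy geometry into uniform continuity of $\cL_\bigT$ restricted to each $\{\nBessel_\bigT \leq c\}$.

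With this in hand, both bounds follow routinely. For the upper bound~\eqref{eq: limsup claim curves}, fix Hausdorff-closed $\closed \subset \compsets$ and $c > 0$; by the sublevel continuity, $\cL_\bigT^{-1}(\closed) \cap \{\nBessel_\bigT \leq c\}$ is closed in $\BesselSpace{\bigT]}{\btheta_0}$, and splitting the event $\{\bgamma^\kappa_{[0,\bigT]} \in \closed\}$ according to whether $\nBessel_\bigT(\bDyson|_{[0,\bigT]}) \leq c$, applying Theorem~\ref{thm:Bessel_LDP} to each piece, and letting $c \to \infty$ yields~\eqref{eq: limsup claim curves}. For~\eqref{eq: liminf claim curves}, any $K \in \open$ with $\lenergy_\bigT(K) < \infty$ admits $\btheta \in \cL_\bigT^{-1}(K)$ with $\nBessel_\bigT(\btheta)$ arbitrarily close to $\lenergy_\bigT(K)$; continuity of $\cL_\bigT$ at this finite-energy $\btheta$ produces a uniform neighborhood $\nbhd$ with $\cL_\bigT(\nbhd) \subset \open$, and Theorem~\ref{thm:Bessel_LDP} then delivers the bound after optimizing over $K$ and $\btheta$. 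The principal obstacle is thus the finite-energy structure theorem with its derivative estimate: the multiradial Loewner flow has poles at all $n$ marked points, and uniformly bounding their mutual interactions on $[0,\bigT]$ in terms of the Dirichlet energy is the technical crux of the argument.
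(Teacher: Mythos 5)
Your high-level plan matches the paper's: push the LDP of Theorem~\ref{thm:Bessel_LDP} through the Loewner transform, handle the discontinuity by showing finite-energy hulls are disjoint simple arcs (Theorem~\ref{thm:finite-energy-gives-simple-curves}), and your observation that $\cL_\bigT$ is then continuous on each sublevel set $\{\nBessel_\bigT\leq c\}$ is correct. But the way you deploy this for the two bounds has two genuine gaps. For the lower bound you invoke ``continuity of $\cL_\bigT$ at this finite-energy $\btheta$'' to produce a uniform neighborhood $\nbhd$ with $\cL_\bigT(\nbhd)\subset\open$. That statement is false: continuity on the sublevel set only controls sequences $\btheta_{(k)}\to\btheta$ \emph{all} satisfying $\nBessel_\bigT(\btheta_{(k)})\leq c$, whereas a uniformly arbitrarily small perturbation of a finite-energy driver can have arbitrarily large or infinite energy and can generate a hull that is far from $\cL_\bigT(\btheta)$ in the Hausdorff metric (a rapidly oscillating driver of vanishing amplitude produces hulls that do converge in the Carath\'eodory sense but whose Hausdorff limit is strictly bigger; this is precisely why the ambient Loewner transform is discontinuous). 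So no such $\nbhd$ exists in general, and the lower bound does not follow as stated. For the upper bound your truncation requires $\limsup_{\kappa\to0+}\kappa\log\PrDyson[\nBessel_\bigT(\bDyson)>c]\to-\infty$ as $c\to\infty$, but $\{\nBessel_\bigT>c\}$ is \emph{open} (by lower semicontinuity of the rate function), so the LDP upper bound does not apply to it. This can in principle be rescued by exponential tightness of $(\PrDyson)_\kappa$, but (a) you do not invoke it, and (b) the compacts from exponential tightness are not sublevel sets, so they do not directly match your continuity statement; an additional argument is needed.

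The paper avoids both issues via the ``restricted LDP'' (Proposition~\ref{prop:restricted-LDP}): it first pushes the LDP through $\cL_\bigT'$ into the space of hulls with the Carath\'eodory topology (where the transform \emph{is} continuous, no energy restriction needed), then restricts the resulting LDP to the set $\rmultichordspace$ of simple radial multichords --- which is legitimate because $\rmultichordspace$ has full $\probSLE$-measure for $\kappa\leq 4$ and contains all finite-energy hulls by Theorem~\ref{thm:finite-energy-gives-simple-curves} --- and only then applies a second contraction via the inclusion $\iota|_{\rmultichordspace}\colon\rmultichordspace\hookrightarrow\compsets$, whose Carath\'eodory-to-Hausdorff continuity is what Lemma~\ref{lem:caratheodory-vs-hausdorff-convergence} supplies. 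Thus no tail estimate is needed and no continuity is asserted at any point of the ambient driver space. A smaller discrepancy: you propose to derive the derivative estimate directly for the multiradial backward flow; the paper instead proves it only for $n=1$ (Theorem~\ref{thm: FS_estimate}, with a general weight $\lambda$) and reduces $n\geq 2$ to $n=1$ via a conformal-restriction-type removal of the other hulls (Proposition~\ref{prop:removal-of-hulls-loewner}), which avoids controlling the singular interaction terms all at once.
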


We prove \Cref{thm:radial_LDP_finite_time} in Section~\ref{subsec: LDP for SLE}. 
The idea is to make careful use of the contraction principle and derive Theorem~\ref{thm:radial_LDP_finite_time} 
from the LDP for Dyson Brownian motion 
(Theorem~\ref{thm:Bessel_LDP}).
The usage of the contraction principle will be justified by topological results concerning Loewner theory and finite-energy hulls.
These results enable us to essentially disregard the discontinuities of the Loewner transform. 
We show that finite-energy multiradial Loewner hulls are always disjoint unions of simple curves  
(see Theorem~\ref{thm:finite-energy-gives-simple-curves} below). 
For this, our strategy is to first derive a derivative estimate for the single-chord radial Loewner map in terms of the energy of its driving function (see \Cref{thm: FS_estimate}), 
and then to use complex analysis techniques to pull this result to the case of several curves.

\begin{thm}\label{thm:finite-energy-gives-simple-curves}
Consider a multiradial Loewner chain $(K_t)_{t \geq 0} = (\cL_t(\btheta))_{t \geq 0}$ 
with the common parameterization for which the uniformizing conformal maps $g_t \colon \bD \smallsetminus K_t \to \bD$ solve~\eqref{eqn:multiradial_Loewner_1common}
with drivers $z^j_t = e^{\ii \theta^j_t}$ for $1 \leq j \leq n$, 
where $\btheta = (\theta^1, \ldots, \theta^n) 
\in C_{\btheta_0}([0,\bigT], \chamber)$.
If $\lenergy_\bigT(K_\bigT) < \infty$, then the hull $K_\bigT$ consists of $n$ pairwise disjoint simple curves.
\end{thm}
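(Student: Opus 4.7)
The plan is to reduce the multiradial setting to the single-chord radial one, where \Cref{thm: FS_estimate} gives a derivative estimate sufficient to conclude simplicity, and to obtain the reduction by a conformal decomposition that isolates each component.

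\textbf{Step 1 (non-collision and initial splitting of $K_t$).} Finiteness of $\nBessel_\bigT(\btheta)$ forces $\btheta$ to be absolutely continuous on $[0,\bigT]$ with $\dot{\theta}^j - 2\phi^j(\btheta) \in L^2([0,\bigT])$ for each $j$. Since $\phi^j$ contains the singular term $\cot((\theta^j-\theta^k)/2)$, which is non-integrable near coincidence, a Cauchy--Schwarz / contradiction argument rules out two angles colliding on $[0,\bigT]$. Hence the driving points $z^j_t=e^{\ii\theta^j_t}$ stay pairwise distinct and~\eqref{eqn:multiradial_Loewner_1common} is classically well-posed on $[0,\bigT]$, so $K_t$ decomposes, at least locally in $t$, into $n$ connected components $K_t^1,\ldots,K_t^n$ with $K_t^j$ seeded near $z^j_0$.

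\textbf{Steps 2--3 (decoupling and energy transport).} Fix $j$ and set $D_t^j := \bD \setminus \bigcup_{k \neq j} K_t^k$; let $\psi_t^j \colon D_t^j \to \bD$ denote the normalized conformal map with $\psi_t^j(0)=0$ and $(\psi_t^j)'(0)>0$. Then $h_t^j := g_t \circ (\psi_t^j)^{-1}$ uniformizes $\bD \setminus \psi_t^j(K_t^j)$. Differentiating $g_t=h_t^j\circ\psi_t^j$ in $t$ and using~\eqref{eqn:multiradial_Loewner_1common}, one verifies that, after a time change $s=s^j(t)$ given by the logarithmic capacity of $\psi_t^j(K_t^j)$ seen from $0$, the family $h^j$ satisfies a single-chord radial Loewner equation driven by an explicit function $\vartheta^j_s$ built out of $\theta^j$ and the values $\psi_t^j(z^k_t)$ for $k \neq j$. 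The key observation is that the cotangent drift $2\phi^j(\btheta)$ appearing in~\eqref{eqn:driving_spde} coincides with the drift produced by differentiating $\psi_t^j$ at the $j$-th boundary point. Hence the drift-corrected increment $\dot{\theta}^j-2\phi^j(\btheta)$ becomes, after the time change, precisely the time derivative of $\vartheta^j$ in the single-chord Loewner picture, and the single-chord Dirichlet energy of $\vartheta^j$ is thus bounded by the $j$-th summand of $\nBessel_\bigT(\btheta)$, in particular finite.

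\textbf{Steps 4--5 (simplicity and disjointness).} Applying \Cref{thm: FS_estimate} to $\vartheta^j$ yields a uniform derivative estimate for the single-chord radial Loewner map associated with $\psi_t^j(K_t^j)$, which in turn forces $\psi_t^j(K_t^j)$ to be a simple curve in $\overline{\bD}$ (via the standard passage from a boundary derivative bound to equicontinuity of $(h_t^j)^{-1}$, hence to simplicity of the trace). Pulling back through the conformal map $\psi_t^j$ shows that $K_t^j$ is a simple curve, and continuity up to the boundary of $\psi_t^j$ at the $j$-th tip ensures that $\gamma^j := K_\bigT^j$ has no self-intersection. Pairwise disjointness on $[0,\bigT]$ follows from the prime-end correspondence of each $\psi_t^j$ together with the non-collision of driving points, which keeps the boundary prime ends marking the $j$-th and $k$-th tips distinct for all $t\in[0,\bigT]$.

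I expect the main obstacle to be the energy transport in Steps 2--3: verifying that the decomposition $g_t=h_t^j\circ\psi_t^j$ sends the multiradial energy of $\btheta$ to the single-chord energy of $\vartheta^j$ without losing control of the interaction terms. This requires that the extra drifts generated by the pull-backs of the other driving points precisely realize the $2\phi^j(\btheta)$ cotangent drifts in~\eqref{eqn:driving_spde}, up to terms absorbable into the $L^2$ energy. A secondary subtlety is the global-in-$t$ well-definedness of the components $K_t^j$, which should be handled by a continuity / bootstrap argument: as long as the components remain separated the decomposition yields a simple curve for each, and the simplicity itself together with non-collision of driving points prevents subsequent mergers on $[0,\bigT]$.
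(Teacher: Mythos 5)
Your Step~1 and Steps~4--5 describe the right global plan --- isolate each component by a conformal decomposition, reduce to a single radial chord, and invoke the derivative estimate of \Cref{thm: FS_estimate} --- which is precisely the route taken by the paper, via \Cref{prop:hulls_avoid_boundary_and_each_other} and \Cref{prop:removal-of-hulls-loewner}. However, there are three substantive gaps in the execution.

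First, the decomposition in Steps~2--3 is set up so that it does not produce a Loewner chain. You write $h^j_t = g_t\circ(\psi^j_t)^{-1}$ with $\psi^j_t$ uniformizing $\bD\smallsetminus\bigcup_{k\ne j}K^k_t$, so that $h^j_t$ uniformizes $\bD\smallsetminus\psi^j_t(K^j_t)$. But $\psi^j_t$ is a \emph{time-dependent} change of coordinates applied on the source side, and the sets $\psi^j_t(K^j_t)$ are not monotonically increasing in $t$ (as $t$ grows, both $K^j_t$ and the removed hulls $\bigcup_{k\ne j}K^k_t$ change, and the images under the changing map $\psi^j_t$ have no inclusion relation). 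Without monotone locally growing hulls there is no radial Loewner equation to feed into \Cref{thm: FS_estimate}. The correct decomposition, carried out in \Cref{prop:removal-of-hulls-loewner}, removes the \emph{entire final} complementary hull $R^k_\bigT$ once and for all via a fixed map $\varphi^k_0$, and composes with a time-dependent map $\varphi^k_t$ only on the \emph{target} side ($\hat g^k_t = \varphi^k_t\circ g_t\circ(\varphi^k_0)^{-1}$). Then $\hat K^k_t = \varphi^k_0(K^k_t)$ is manifestly increasing, and one gets a genuine weighted single-chord radial Loewner chain.

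Second, the claimed "energy transport" identity is not correct. You assert that $\dot\theta^j - 2\phi^j(\btheta)$ becomes, after the time change, \emph{precisely} $\dot\vartheta^j$, so that the single-chord energy is bounded by the $j$-th summand of $\nBessel_\bigT(\btheta)$. There is no such identity: writing $\hat\theta^k_t = \chi^k_t(\theta^k_t)$ for the pushed-forward driver, one has $\tfrac{\ud}{\ud t}\hat\theta^k_t = \partial_t\chi^k_t(\theta^k_t) + (\chi^k_t)'(\theta^k_t)\,\tfrac{\ud}{\ud t}\theta^k_t$, and the deterministic drift $\partial_t\chi^k_t(\theta^k_t)$ produced by the evolution of the ambient conformal map is not the cotangent drift $2\phi^k$. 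Additionally, the decomposition introduces a nontrivial weight $\lambda^k_t = ((\chi^k_t)'(\theta^k_t))^2$, which is why the paper needs the weighted version of \Cref{thm: FS_estimate}. What the paper actually proves is only that $\nDenergy_\bigT(\hat\theta^k)<\infty$ and $\nDenergy_\bigT(\lambda^k)<\infty$, deduced from the joint continuity (hence boundedness on $[0,\bigT]$) of $(\chi^k_t)'$, $(\chi^k_t)''$, $\partial_t\chi^k_t$, $\partial_t(\chi^k_t)'$ together with $\nDenergy_\bigT(\theta^k)<\infty$ (from \Cref{cor:finite-energy-characterization}); this finiteness is all that is needed, and it requires no miraculous cancellation.

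Third, Step~1 establishes only that the driving angles do not collide, but the splitting $K_t = \bigsqcup_j K^j_t$ with $K^j_t\cap\partial\bD=\{e^{\ii\theta^j_0}\}$ requires more: one must rule out that some boundary point $x\in\partial\bD\smallsetminus\{e^{\ii\theta^j_0}\}$ is swallowed, which is a separate energy-contradiction argument (\Cref{prop:hulls_avoid_boundary_and_each_other}) analogous to but distinct from the non-collision bound. Non-collision of drivers alone permits, in principle, a swallowing event; the multiradial Dirichlet energy must blow up \emph{for that specific boundary point's flow}. Your bootstrap suggestion in Steps~4--5 ("simplicity prevents mergers") is circular as stated, since simplicity is what is being proven; the clean way to break the circle is to first prove the boundary-avoidance proposition outright.
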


We prove Theorem~\ref{thm:finite-energy-gives-simple-curves} in Section~\ref{subsec:finite-energy-gives-simple-curves}. 
The key inputs are the derivative estimate in the case of $n=1$ (\Cref{thm: FS_estimate} in \Cref{subsec:Derivative estimate}), 
which is a weighted, radial generalization of a result appearing in~\cite{Friz-Shekhar:Finite_energy_drivers}, 
and a sort of generalized conformal restriction property 
(see Proposition~\ref{prop:removal-of-hulls-loewner}), which we will utilize to pull the $n=1$ result to general $n \geq 2$. 

\begin{remark}
Multiradial $\SLE_\kappa^\spr$ with spiraling rate $\spr \in \bR$ 
is the random radial multichord $\bgamma$ for which the uniformizing conformal maps $g_t \colon \bD \smallsetminus \bgamma[0,t] \to \bD$ satisfy~\eqref{eqn:multiradial_Loewner_1common}
with driving functions $z^j_t = e^{\ii \Dyson^j_t}$ for $1 \leq j \leq n$, 
where $\bDyson_t = (\Dyson^1_t, \ldots, \Dyson^n_t)$ is the strong solution in $\BesselSpace{\infty)}{\btheta_0}$ to the SDEs~\eqref{eqn:diffusion_in_new_measure_spiral} 
\cite{Sheffield-Miller:Imaginary_geometry4, KWW:Commutation_relations_for_two-sided_radial_SLE, HPW:Multiradial_SLE_with_spiral}. 
Our results apply directly to derive a finite-time LDP for this process as well (i.e., a version of \Cref{thm:radial_LDP_finite_time}), 
with good rate function obtained from \Cref{cor:Bessel_LDP_spiral} 
similarly as in Equation~\eqref{eqn:nradial-energy}. 
\end{remark}

\subsection{Finite-energy and zero-energy systems}

In the final Section~\ref{sec:zero-energy} we analyze the interacting particle system corresponding to finite-energy drivers of Dyson type governed by Definitions~\ref{def:multiradial_Dirichlet_energy}~\&~\ref{def:Dyson_type_potential}, 
under the additional assumption that the potential is symmetric and separately convex (\Cref{def:symmetric_Dyson_type_potential}). 

In Theorem~\ref{thm:U_equally_spaced}, we characterize the large-time behavior of such driving functions:
they eventually approach an equally-spaced configuration~\eqref{eq:U_equally_spaced}. 
Moreover, for zero-energy systems the limit is static~\eqref{eq:equally-spaced-static}.
Interestingly enough, the zero-energy case can also be viewed in terms of the dynamics of a Calogero-Moser-Sutherland integrable system, as we briefly discuss at the end of this section.

\begin{df}\label{def:symmetric_Dyson_type_potential}
We say that a Dyson-type potential $\DPotential=-\log\partfn$ is \emph{symmetric} if
\begin{align*}
\newphi^j= \sum_{\substack{1 \leq k \leq n \\[.1em] k\neq j}} \pairderiv(\theta^j - \theta^k),
\end{align*}
for an odd function (pair interaction) $\pairderiv \in C^1 (\chamberSingle\setminus\{0\}, \bR)$ satisfying
\begin{align*}
\pairderiv (\pi) =0 \qquad \textnormal{and} \qquad \lim_{\theta\to 0^+}\pairderiv(\theta)\in (0, \infty] .
\end{align*}
Furthermore, we say that such $\DPotential$ is \emph{separately convex} if moreover 
\begin{align} \label{eqn:deriv_bound_def_intro}\tag{{\textsc{cvx}}}
\constphi := - \sup_{\theta\in (0, 2\pi)}  \pairderiv' (\theta) > 0.
\end{align}
\end{df}

In particular, the Dyson-type potential~\eqref{eq:DBM_choices} is symmetric: $\pairderiv (\theta) = \two \cot(\frac{\theta}{2})$, and separately convex: $\constphi=\ONE$. 
Our analysis of the large-time behavior of finite-energy systems relies on the assumption that $\constphi$ is strictly greater than zero --- 
indeed, this constant appears in the rate of convergence for zero-energy systems in \Cref{thm:U_equally_spaced}. % the next result.
Moreover, by~\eqref{eqn:def_of_drift}, the condition $\constphi > 0$ implies that $\Delta \DPotential (\btheta) > 0$ for all $\btheta \in \chamber$ --- that is, $\DPotential$ is strictly subharmonic
(the mean curvature of the graph of $\DPotential$ is strictly positive). Note that this is a weaker condition than 
convexity\footnote{More precisely, by~\eqref{eqn:def_of_drift}, the condition $\constphi > 0$ implies that the Hessian matrix of $\DPotential$ has strictly positive diagonal entries and strictly negative off-diagonal entries, not guaranteeing convexity as such.}.
The strict subharmonicity also implies that $\DPotential$ satisfies a Poisson equation with a strictly positive source term, and in the spirit of Coulomb gas or electrostatics, 
$\DPotential$ could thus be regarded of as a potential associated to a strictly positive density.
Combined with the differential inequalities~\eqref{eq:differential_inequalities}, the condition $\constphi > 0$ results in
\begin{align*}
0 < \constmult \, \constphi n (n-1)< \constmult\, \Delta \DPotential(\btheta) \le | \nabla \DPotential (\btheta) |^2 + \constaddnew , \qquad \textnormal{for all }  \; \btheta \in \chamber.
\end{align*}

\begin{thm}[Asymptotic configuration of finite-energy systems]
\label{thm:U_equally_spaced}
Fix an integer $n \geq 2$. Let $\DPotential$ be a symmetric and separately convex Dyson-type potential. 
If $\nBessel(\btheta) < \infty$, then 
\begin{align}\label{eq:U_equally_spaced}
\lim_{t \to \infty}(\theta^{j+1}_t-\theta^j_t) = \frac{2\pi}{n} , \qquad \textnormal{for all } j \in \{1, \ldots, n\}.
\end{align}
Furthermore, if $\nBessel(\btheta) = 0$, then there exists $\zeta \in [0, 2\pi)$ such that 
\begin{align} \label{eq:equally-spaced-static}
\lim_{t\to \infty} \btheta_t = \big( \zeta , \, \zeta + \tfrac{2\pi}{n} , \, \ldots , \, \zeta+ \tfrac{(n-1)2\pi}{n} \big) ,
\end{align}
and the convergence is exponentially fast with exponential rate $\constphi n$, 
for $\constphi$ as in~\eqref{eqn:deriv_bound_def_intro}.
\end{thm}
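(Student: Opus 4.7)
The plan is to introduce a Lyapunov functional $F$ on $\chamber$ whose gradient is (half) the drift $\phi$, and to leverage the identity this produces when combined with the definition of $\nBessel_T$. Concretely, set
\begin{equation*}
F(\btheta) \; := \sum_{1 \le j < k \le n} \log\Big|2\sin\big(\tfrac{\theta^j-\theta^k}{2}\big)\Big|
\; = \; \log\big|\Delta(e^{\ii\btheta})\big|,
\end{equation*}
the logarithm of the Vandermonde on the unit circle. A direct computation gives $\partial_{\theta^l} F(\btheta) = \tfrac{1}{2}\phi^l(\btheta)$ for each $l$. The function $F$ is rotation-invariant and proper on $\chamber$ modulo global rotation (tending to $-\infty$ at the collision boundary), and by classical Fekete-point theory for the unit circle $F$ attains its global maximum $F_{\max} = \tfrac{n}{2}\log n$ precisely at the equally-spaced configurations --- at which $\phi$ vanishes --- and these are the only critical points of $F$ in the quotient.

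Completing the square in the integrand defining $\nBessel_T$, and using $\sum_l \dot\theta^l\phi^l = 2\tfrac{d}{dt}F(\btheta_t)$, gives
\begin{equation*}
\nBessel_T(\btheta)
\; = \; \tfrac{1}{2}\int_0^T \sum_j (\dot\theta^j_s)^2 \,ds
\; + \; 2\int_0^T |\phi(\btheta_s)|^2 \,ds
\; - \; 4\big(F(\btheta_T) - F(\btheta_0)\big).
\end{equation*}
When $\nBessel(\btheta) < \infty$, the bound $F \le F_{\max}$ forces both integrals on the right to be finite as $T\to\infty$, so $F(\btheta_T)$ converges to some $F_\infty \le F_{\max}$, and the trajectory $\btheta_t$ (modulo rotation) stays in a compact subset of $\chamber$ away from the collision boundary.

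Set $G := F_{\max} - F \ge 0$, which is proper on the rotational quotient with unique zero at the equally-spaced configuration. I claim that $G(\btheta_t) \to 0$: if instead $L := \lim G(\btheta_t) > 0$, then for $t$ large $\btheta_t$ lies (modulo rotation) in the compact set $K := \{L/2 \le G \le 2L\}$, which excludes both collisions and the equally-spaced point. The continuous rotation-invariant function $|\phi|^2$ vanishes only at critical points of $F$ --- all equally-spaced, by the above --- hence $|\phi|^2 \ge \delta^2 > 0$ on $K$, forcing $\int_0^\infty|\phi|^2\,ds = \infty$, a contradiction. Hence $G(\btheta_t) \to 0$, and properness of $G$ yields $\theta^{j+1}_t - \theta^j_t \to 2\pi/n$, proving~\eqref{eq:U_equally_spaced}. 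The most delicate input in this step is the classical uniqueness of critical points of the log-Vandermonde on the circle modulo rotation; this is standard but nontrivial, and I expect it to be the main obstacle.

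For the zero-energy case, $\sum_l \phi^l \equiv 0$ by antisymmetry of $\cot$, so $\sum_l \theta^l_t$ is conserved along $\dot\btheta = 2\phi(\btheta)$ (cf.~Proposition~\ref{prop:ODE_existence_uniqueness}). Combined with the gap limit just established, this uniquely determines $\zeta$ in~\eqref{eq:equally-spaced-static} via $n\zeta + (n-1)\pi \equiv \sum_l\theta^l_0 \pmod{2\pi}$. For the exponential rate I linearize at $\btheta^\star := \big(\zeta, \zeta + \tfrac{2\pi}{n}, \ldots, \zeta + \tfrac{(n-1)2\pi}{n}\big)$: writing $\btheta_t = \btheta^\star + \boldsymbol{\varepsilon}_t$, the Jacobian $M := 2D\phi(\btheta^\star)$ is the circulant matrix with off-diagonal entries $M^j_l = \csc^2\!\big((j-l)\pi/n\big)$ and diagonal $M^j_j = -S$, where $S := \sum_{m=1}^{n-1}\csc^2(m\pi/n) = \tfrac{n^2-1}{3}$. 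Discrete Fourier analysis for circulants combined with the standard trigonometric identity $\sum_{m=1}^{n-1}\csc^2(m\pi/n)\,e^{2\pi\ii mk/n} = \tfrac{n^2-1-6k(n-k)}{3}$ identifies the eigenvalues of $M$ as $0$ and $-2k(n-k)$ for $k=1,\ldots,n-1$. The zero eigenvector is $(1,\ldots,1)$, along which conservation of $\sum_l \theta^l_t$ forbids motion; on the orthogonal complement the slowest contraction rate equals $2(n-1) \ge n$. A standard Gr\"onwall argument, valid once $|\boldsymbol{\varepsilon}_t|$ is small (which holds for $t$ large by the previous paragraph), then yields $|\btheta_t - \btheta^\star| = O(e^{-nt})$.
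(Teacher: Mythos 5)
Your argument takes a genuinely different route from the paper's, and the comparison is instructive. The paper never invokes the variational structure $\phi^l = 2\,\partial_{\theta^l}\log F$ directly: for the zero-energy rate it uses the elementary telescoping inequality of \Cref{lem:phi-difference-estimate} to derive a pointwise differential inequality on the minimum gap, and for the finite-energy gap convergence (\Cref{prop:finite_energy}) it bounds $|\phi^j|$ via the triangle inequality in terms of the two $L^2$ quantities $\tfrac{\ud}{\ud t}\theta^j-2\phi^j$ and $\tfrac{\ud}{\ud t}\theta^j$, then runs a measure-theoretic argument on the set where the minimum gap stays small. You instead treat the flow as a gradient flow of the log-Vandermonde $F$, complete the square to obtain
\begin{align*}
\nBessel_T(\btheta) = \nDenergy_T(\btheta) + 2\int_0^T|\phi(\btheta_s)|^2\,\ud s - 4\big(F(\btheta_T)-F(\btheta_0)\big),
\end{align*}
conclude $\int_0^\infty|\phi|^2\,\ud s<\infty$, and argue convergence to the maximizer by a LaSalle-type invariance principle. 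The circulant eigenvalue computation $\{0,-2k(n-k):1\le k\le n-1\}$ and the use of the conserved quantity $\sum_j\theta^j_t$ to pin down $\zeta$ in the zero-energy case are both correct, and the linearized rate $2(n-1)\ge n$ is a harmless sharpening of the paper's rate $n$.

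There is, however, a genuine gap that you flag but do not close: the LaSalle step requires that the equally-spaced configurations be the \emph{only} critical points of $F$ modulo rotation --- Fekete-point theory gives uniqueness of the \emph{maximizer}, not of critical points --- and this is exactly what you need to guarantee $|\phi|$ is bounded below on the compact set $K=\{L/2\le G\le 2L\}$. Without it, the proof of~\eqref{eq:U_equally_spaced} is incomplete, and everything downstream (the Gr\"onwall step at $\btheta^\star$) also depends on it, since you must first know the trajectory eventually enters a neighborhood of $\btheta^\star$. The missing fact is true and can be supplied by a short concavity argument: since $\tfrac{\ud^2}{\ud x^2}\log\sin(x/2)=-\tfrac14\csc^2(x/2)<0$, the Hessian of $F$ equals $-\tfrac14\sum_{j<k}\csc^2\big(\tfrac{\theta^j-\theta^k}{2}\big)(e_j-e_k)(e_j-e_k)^{\mathsf T}$, which is negative semi-definite with kernel exactly $\mathrm{span}\{(1,\dots,1)\}$; thus $F$ is strictly concave on each slice $\{\sum_j\theta^j=c\}$, it has at most one critical point there, and by symmetry that is the equally-spaced configuration. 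Add this (or a precise reference for uniqueness of the trigonometric Calogero--Moser equilibrium) to make the argument rigorous. The paper's route via \Cref{lem:phi-difference-estimate} --- a direct telescoping estimate making no appeal to the variational picture --- sidesteps the issue entirely and is therefore more self-contained, at the cost of a slightly weaker constant in the rate.
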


We prove \Cref{thm:U_equally_spaced}
in Section~\ref{subsec:Finite-energy-results}, 
where we also discuss the rate of convergence for finite-energy systems (see \Cref{rem:conv_optimal}~\&~\Cref{prop:finite_time_finite_energy_energy_slowconv}). 

To understand what this result means for the zero-energy curves, let the angle $\zeta\in [0, 2\pi)$ be fixed, and let $\btheta^{\zeta}$ denote the constant configuration 
\begin{align*}
\btheta^{\zeta}_t 
\equiv \btheta^{\zeta}
:= \big( \zeta, \zeta + \tfrac{2\pi}{n}, \ldots, \zeta+ \tfrac{(n-1)2\pi}{n} \big) , \qquad \textnormal{for all } t \geq 0 . 
\end{align*}
By symmetry, we see that the constant driving functions $\exp(\ii \btheta^\zeta_t)$ generate the ``pizza pie'' configuration of curves: the union of straight lines in $\overline{\bD}$ from the points 
$\exp(\ii(\zeta + 2\pi j/n))$, $1 \leq j \leq n$, to the origin. 
\Cref{thm:U_equally_spaced} implies that for large enough times, the zero-energy driving functions approach this configuration. 
Thus, we expect that the union of curves $g_\bigT(\bgamma_{[\bigT,\infty)})$ approaches the pizza pie configuration,
though we do not prove this in the present article. 
Understanding the asymptotic configurations of general finite-energy systems in detail will be key for establishing an infinite-time LDP for radial SLE processes.

In the special case where the potential is given by~\eqref{eq:DBM_choices}, 
when considering zero-energy systems in the context of Hamiltonian dynamics, the associated particle system is called the
\emph{Calogero-Moser-Sutherland} system (or sometimes the trigonometric Calogero-Moser system). 
Its study dates back to the original articles~\cite{Calogero:Solution_of_one-dimensional_n-body_problems, Sutherland:Exact_results_for_a_quantum_many-body_problem_in_one-dimension_II, Moser:Three_integrable_Hamiltonian_systems_connected_with_isospectral_deformations}.
Interestingly, it is known in particular that the equilibrium states of systems with the Dyson-type and Calogero-type potentials coincide (see~\cite{Calogero:Equilibrium_configuration} for the case of particles on the real line). 
To see this, let us consider the (quantum) Calogero-Sutherland  Hamiltonian
\begin{align*}
H^{(\kappa)}(\btheta) 
\; = \;  \frac{1}{2} \sum_{i=1}^n \partial_i^2 \; + \; \frac{(4-\kappa)}{\kappa^2} \sum_{i=1}^n \partial_i \sum_{\substack{1 \leq j \leq n \\[.1em] j \neq i}} \cot \bigg(\frac{\theta^i - \theta^j}{2} \bigg) ,
\end{align*}
or rather, its classical limit (Calogero-Moser-Sutherland Hamiltonian)
\begin{align*}
H(\btheta) = \lim_{\kappa \to 0+} \kappa^2 H^{(\kappa)}
= \; \; & 4 \sum_{i=1}^n \partial_i \sum_{\substack{1 \leq j \leq n \\[.1em] j \neq i}} \cot \bigg(\frac{\theta^i - \theta^j}{2} \bigg) 
\; = \; 4 \sum_{i=1}^n \partial_i^2 \log \nradpartfn (\btheta) ,
\end{align*}
using the multiradial choice~\eqref{eq:DBM_choices} for the potential $- \log \nradpartfn$ with $\nradpartfn(\btheta) = \smash{\underset{i < j}{\prod} \, \sin^2 \big(\frac{\theta^j - \theta^i}{2}\big)}$.
The associated Hamiltonian equation for the momenta $\tfrac{\ud}{\ud t} \theta_t^j$ reads
\begin{align*}
\tfrac{\ud^2 }{\ud t^2} \theta_t^j
= - \partial_j H(\btheta_t)
= - 4 \sum_{i=1}^n \partial_j \partial_i \phi^i(\btheta)
= 4 \sum_{\substack{1 \leq j \leq n \\[.1em] j \neq i}} \cot \bigg(\frac{\theta^i - \theta^j}{2} \bigg) \csc^2 \bigg(\frac{\theta^i - \theta^j}{2} \bigg) , 
\qquad t \geq 0 ,
\end{align*}
with $\phi^i = \smash{\underset{j\neq i}{\sum} \, \cot \big( \frac{\theta^i - \theta^j}{2} \big) }$. 
Interestingly enough, our Proposition~\ref{prop:ODE_existence_uniqueness} also yields
\begin{align*}
\tfrac{\ud^2 }{\ud t^2} \theta_t^j 
= 2 \tfrac{\ud}{\ud t} \phi^j (\btheta_t) 
= 4 \sum_{\substack{1 \leq i \leq n \\[.1em] i \neq j}} \big( \partial_i \phi^j (\btheta_t) \big) \, \phi^i (\btheta_t) 
= 4 \sum_{\substack{1 \leq j \leq n \\[.1em] j \neq i}} \cot \bigg(\frac{\theta^i - \theta^j}{2} \bigg) \csc^2 \bigg(\frac{\theta^i - \theta^j}{2} \bigg) , 
\end{align*}
since $\newphinrad^j=2\phi^j$, so we recover the Calogero-Moser-Sutherland equations of motion.

Connections between chordal $\SLE_{0}$ and the (rational) Calogero-Moser systems have appeared recently in~\cite{ABKM:Pole_dynamics_and_an_integral_of_motion_for_multiple_SLE0}, 
and the preprint~\cite{Zhang:Multiple_radial_SLE0_and_classical_Calogero-Sutherland_system} (which appeared after our first version) also discusses the Calogero-Moser-Sutherland case. 
The existence and uniqueness (up to rotation) of a stable equilibrium for the latter system has been considered, e.g., in~\cite{Muller:2D_locus_configurations_and_the_trigonometric_Calogero-Moser_system}, though our proofs were developed independently. 
Instead of leveraging the connection to Hamiltonian dynamics, our approach depends on explicit analysis of the deterministic PDE obtained by setting $\kappa = 0$ in~\eqref{eqn:driving_spde},
and it applies to a more general setting, often relevant in SLE theory. 
The existence and uniqueness of the zero-energy flow for each starting point $\btheta_0$ is stated in Proposition~\ref{prop:ODE_existence_uniqueness}.

\subsection*{Acknowledgments}

%\begin{itemize}[leftmargin=*]
%\item
This project was initiated while~V.O.H.~and~E.P.~were participating in a program hosted by the Mathematical Sciences Research Institute (MSRI) in Berkeley, California, in Spring 2022, US,  
supported by the National Science Foundation (Grant No.~DMS-1928930).

%\item
Part of this project was performed while E.P.~was participating in a program hosted by the Institute for Pure and Applied Mathematics (IPAM) in Los Angeles, California, US, in Spring 2024,  
supported by the National Science Foundation (Grant No.~DMS-1925919).

%\item 
Part of this project was performed while the authors were participating in a program hosted by the Hausdorff Research Institute for Mathematics (HIM) in Bonn, Germany, in Summer 2025,  
supported by the Deutsche Forschungsgemeinschaft (DFG, German Research Foundation) under Germany's Excellence Strategy EXC-2047/1-390685813.

%\item 
O.A.~is supported by the Academy of Finland grant number 340461 ``Conformal invariance in planar random geometry.''

%\item 
V.O.H.~is partially supported by an AMS Simons Research Enhancement Grant for PUI Faculty.
V.O.H.~would like to thank Nestor Guillen for useful conversations about PDEs.

%\item 
This material is part of a project that has received funding from the  European Research Council (ERC) under the European Union's Horizon 2020 research and innovation programme (101042460): 
ERC Starting grant ``Interplay of structures in conformal and universal random geometry'' (ISCoURaGe) 
and from the Academy of Finland grant number 340461 ``Conformal invariance in planar random geometry.''
E.P.~is also supported by 
the Academy of Finland Centre of Excellence Programme grant number 346315 ``Finnish centre of excellence in Randomness and STructures (FiRST)'' 
and by the Deutsche Forschungsgemeinschaft (DFG, German Research Foundation) under Germany's Excellence Strategy EXC-2047/1-390685813, 
as well as the DFG collaborative research centre ``The mathematics of emerging effects'' CRC-1060/211504053.

%\end{itemize}

\section{LDP for Dyson-type diffusions on the circle}\label{sec:Bessel_LDP}

In this section, we prove our first main result, Theorem~\ref{thm:Bessel_LDP_general}, which in particular yields  
a finite-time LDP for the $n$-radial Bessel process (equivalently, for Dyson Brownian motion on the circle), Theorem~\ref{thm:Bessel_LDP}. 
We will justify the upper and lower bounds of type~(\ref{eq: limsup claim basic},~\ref{eq: liminf claim basic}) separately, 
by applying Varadhan's lemma (Lemma~\ref{lemma:Varadhan}) and 
relying on Schilder's theorem for Brownian motion (Theorem~\ref{thm:Schilder}) as key input.
We however first need to control the difference of the rate function $\nBessel_\bigT$ to the usual
Dirichlet energy $\nDenergy_{\bigT}$ appearing in Schilder's theorem 
--- see in particular Definition~\ref{def:Phi_kappa} and Lemmas~\ref{lem: limit of Phi kappa}~\&~\ref{lem: J as sum of independent and interaction terms}. 
Moreover, because the change of measure from independent Brownian motions 
contains a factor that is not uniformly bounded, we need a specific tail estimate (\Cref{lem:bad event bound}).

Before addressing the proof of the main result, in  Section~\ref{subsec: DMB and Bessel background} 
we gather definitions of the various diffusions (Dyson Brownian motion and $n$-radial Bessel processes), 
and in Section~\ref{sec:multiBessel_construction} we explicitly describe the setup in the context of changes of measures --- see in particular \Cref{prop:martingale(new)} and \Cref{prop:Bessel_measure}.
We then address salient properties of the Dyson-Dirichlet energy $\nBessel_\bigT$ (Section~\ref{subsec: Multiradial Dirichlet Energy}), the rate function in the LDP.
We finally prove the main \Cref{thm:Bessel_LDP_general} at the end of Section~\ref{subsec: proof of Bessel LDP}.

\subsection{Dyson Brownian motion and $n$-radial Bessel process}
\label{subsec: DMB and Bessel background}

\begin{df}[\cite{Healey-Lawler:N_sided_radial_SLE}] \label{def:radial_Bessel_DBM}
The \emph{$n$-radial Bessel process} on $\tfrac{1}{2}\chamber := \{\btheta \; | \;  2\btheta\in \chamber \}$ with parameter $\alpha \in \bR$ is the process 
$\bBessel^\alpha_t = (\Bessel^1_t, \ldots, \Bessel^n_t)$
satisfying
\begin{align}\label{eqn:n-radial_bessel} 
\ud \Bessel^j_t 
= \; & \alpha \sum_{\substack{1 \leq k \leq n \\[.1em] k\neq j}} \cot \big( \Bessel^j_t - \Bessel^k_t \big) \ud t + \ud W^j_t, \qquad \textnormal{for all } j \in \{1,\ldots,n\} , 
\end{align}
where $W^1_t, \ldots, W^n_t$ are independent Brownian motions.
\emph{Dyson Brownian motion on the circle} is the process $e^{2 \ii \bBessel^\alpha_t} = (e^{2 \ii \Bessel^1_t}, \ldots, e^{2 \ii \Bessel^n_t})$. 
Note that $\bDyson_t := 2\bBessel_{\kappa t/4}^\alpha \in \chamber$ satisfies
\begin{align}\label{eqn:time_change_bessel}
\ud \Dyson^j_t = 
\tfrac{\alpha }{2} \kappa \sum_{\substack{1 \leq k \leq n \\[.1em] k\neq j}} \cot \bigg( \frac{\Dyson^j_t - \Dyson^k_t}{2}\bigg) \ud t + \sqrt \kappa \ud W^j_t ,
\qquad \textnormal{for all } j \in \{1,\ldots,n\} .
\end{align}
In particular, the SDE~\eqref{eqn:driving_spde} appearing in Theorem~\ref{thm:Bessel_LDP}  is~\eqref{eqn:time_change_bessel} with $\alpha=4/\kappa$. 
\end{df}

The half-angle convention in the definition of the $n$-radial Bessel process 
is also convenient for direct comparison with Dyson Brownian motion $(X^1_t, \ldots, X^n_t)$ on the real line, which satisfies the SDEs~\eqref{eqn:DBM_real}.
For random matrix theory applications, this process is more commonly written using the time change $\tilde{X}^j_t = X^j_{t/n}$ so that~\eqref{eqn:DBM_real} is equivalent to
\begin{align*}
\ud \tilde{X}^j_t = 
\frac{1}{n}\sum_{\substack{1 \leq k \leq n \\[.1em] k\neq j}} \frac{1}{\tilde{X}^j_t - \tilde{X}^k_t} \ud t + \sqrt{\frac{1}{n}} \sqrt{\frac{2}{\beta}} \ud W^j_t ,
\qquad \textnormal{for all } j \in \{1,\ldots,n\} .
\end{align*}
Using $\alpha=4/\kappa$ in Definition~\ref{def:radial_Bessel_DBM} 
we find the relationship $\beta=8/\kappa$ between $\beta$ and $\kappa$~\cite{Cardy:SLE_and_Dyson_circular_ensembles}.

Dyson Brownian motion on the circle is also referred to in the literature as the \emph{Dyson circular ensemble}~\cite{FWW:Multiple_Ising_interfaces_in_annulus_and_2N-sided_radial_SLE}.
Although Definition~\ref{def:radial_Bessel_DBM}
holds for any $\alpha$, we will restrict our attention to $\alpha \geq 1$, which corresponds to $\kappa\leq 4$ in Equation~\eqref{eqn:beta_kappa_dependence}. 
Comparison to the usual Bessel process shows that $\alpha=1$ corresponds to the phase transition for recurrence and transience. The existence of a unique strong solution to~\eqref{eqn:n-radial_bessel} (for any $\alpha\geq\frac{1}{2}$) follows from the analogous result for Dyson Brownian motion~\cite{AGZ:An_introduction_to_random_matrices}(see also~\cite{Lawler:SLE_book_draft}).

\subsection{Dyson-type diffusions via change of measure} 
\label{sec:multiBessel_construction}

We now describe the general theory of how to obtain a diffusion of the form~\eqref{eqn:driving_spde} via a change of measure, which is a technique that has proven useful throughout $\SLE$ theory. 
In particular, the construction of $n$-radial Bessel process from~\cite{Healey-Lawler:N_sided_radial_SLE} uses this method, as described below in \Cref{rmk:HL_Bessel}. This perspective will be necessary for the application of Varadhan's Lemma in the proof of \Cref{thm:Bessel_LDP_general}.

Suppose that $\bB_t = (B^1_t, \ldots, B^n_t)$ is an $n$-dimensional standard Brownian motion in $\bR^n$ 
defined on the filtered probability space $(\Omega, \mathcal F_t, \bP)$, 
where $\mathcal F_\bullet$ is its natural right-continuous completed filtration. 
Fix $\btheta_0 \in \chamber$ and define $\bDyson_t = (\Dyson^1_t,\ldots, \Dyson^n_t)  \in \chamber$ by
\begin{align} \label{eqn:def_U_kappa}
\bDyson_t = \btheta_0 + \sqrt \kappa \, \bB_t , 
\qquad
\bDyson_0 = \btheta_0 ,
\end{align}
stopped at the collision time
\begin{align*}
\blowuptime := \; & \inf \Big\{t \geq 0 \; \colon \; \min_{1 \leq j < k \leq n} \, \big| e^{\ii \Dyson^j_t} - e^{\ii \Dyson^k_t} \big| = 0 \Big\} 
= \inf \big\{ t\geq 0 \; \colon \; \bDyson_t \nin \chamber \big\} .
\end{align*}

\begin{proposition}[See,~e.g.,~\cite{Healey-Lawler:N_sided_radial_SLE} and \cite{Kozdron-Lawler:Configurational_measure_on_mutually_avoiding_SLEs}]
\label{prop:generalmartingale}
Let $\kappa >0$ and $\bDyson$ as in~\eqref{eqn:def_U_kappa}. 
Let $\partfn \in C^2(\chamber,[0,\infty))$ with $\partfn(\btheta_0)>0$.
The process on $(\Omega, \mathcal F_t, \bP)$ defined as
\begin{align}\label{eqn:gen_local_mart}
M^\partfn_t := \partfn(\bDyson_t) \, 
\exp \bigg(\! -\frac{1}{2} \int_0^t \frac{\Delta \partfn (\bDyson_s)}{\partfn(\bDyson_s)} \ud s \bigg),
\qquad t < \blowuptimegen,
\end{align}
up to a stopping time $\blowuptimegen\leq \blowuptime$, is a continuous nonnegative local martingale satisfying
\begin{align}\label{eqn:Mart_alpha}
\frac{\ud M^\partfn_t}{M^\partfn_t}
= \sqrt{\kappa} \, \sum_{j=1}^n \newphi_\partfn^j(\bDyson_t) \ud B^j_t , 
\qquad t < \blowuptimegen,
\end{align}
where $\newphi_\partfn^j=\partial_j \log \partfn$ as in~\eqref{eqn:def_of_drift}.
Moreover, if $\PF_t$ is the probability measure absolutely continuous with respect to $\prob_t = \bP|_{\mathcal F_t}$ with Radon-Nikodym derivative $M^\partfn_t/M^\partfn_0$, then
\begin{align}\label{eqn:diffusion_in_new_measure_generalized}
\ud \Dyson^j_t = \kappa \, 
\newphi_\partfn^j(\bDyson_t) \ud t  + \sqrt{\kappa} \ud W^j_t , \qquad t < \blowuptimegen , \qquad  \textnormal{for all } j \in \{1,\ldots,n\} , 
\end{align}
where $W^1_t, \ldots, W^n_t$ are independent standard Brownian motions with respect to $\bP^{\partfn}_t$.
\end{proposition}

\begin{proof}
As $\partfn$ is a nonnegative $C^2$ function, $M^\partfn_t$ defined in~\eqref{eqn:gen_local_mart} is continuous and nonnegative. 
It\^o's formula implies that $M^\partfn_t$ is a local martingale satisfying~\eqref{eqn:Mart_alpha}, and  
Cameron-Martin-Girsanov theorem (in a local martingale form) implies that this change of measure yields Equation~\eqref{eqn:diffusion_in_new_measure_generalized} 
--- see, e.g.~\cite[Chapter~VIII]{Revuz-Yor:Continuous_martingales_and_Brownian_motion} or~\cite[Section~5]{Lawler:Stochastic_calculus} for details. 
\end{proof}

Observe that the appearance of $\kappa$ in~(\ref{eqn:Mart_alpha},~\ref{eqn:diffusion_in_new_measure_generalized}) comes from the definition of $\bDyson_t$ in~\eqref{eqn:def_U_kappa}.
A more standard application of the Cameron-Martin-Girsanov theorem is obtained by setting $\kappa=1$ in \Cref{prop:generalmartingale} (or performing a linear time change). However, we state the result with general $\kappa>0$  since we are interested in large deviations as $\kappa \to 0$.

We will use \Cref{prop:generalmartingale} for powers of partition functions --- including powers that depend on $\kappa$. 
To avoid confusion caused by the associated exponent, we now state the specific version of this result that we will use.

\begin{corollary}\label{cor:power_of_partfn}
Let $\kappa >0$ and $\bDyson$ as in~\eqref{eqn:def_U_kappa}.
Let $\partfn \in C^2(\chamber,[0,\infty))$ with $\partfn(\btheta_0)>0$, and let $a>0$. %$a \in \bR$. 
Let $M^{\partfn, a}$ be the continuous nonnegative local martingale on $(\Omega, \mathcal F_t, \bP)$ defined as
\begin{align}\label{eqn:gen_local_mart_power}
M^{\partfn, a}_t := (\partfn(\bDyson_t))^{a} \, 
\exp \bigg(\! -\frac{1}{2} \int_0^t \frac{\Delta (\partfn (\bDyson_s))^{a}}{(\partfn(\bDyson_s))^{a}} \ud s \bigg) , \qquad t < \blowuptimegen,
\end{align}
up to a stopping time $\blowuptimegen\leq \blowuptime$.
If $\prob^{\partfn^{a}}_t$
is the probability measure absolutely continuous with respect to $\prob_t = \bP|_{\mathcal F_t}$ with Radon-Nikodym derivative $M^{\partfn,a}_t/M^{\partfn,a}_0$
and $\newphi_\partfn^j=\partial_j \log \partfn$, then 
\begin{align}\label{eqn: Dyson_general}
\ud \Dyson^j_t = \; & a\kappa \, \newphi_\partfn^j(\bDyson_t) \ud t 
+ \sqrt \kappa \ud W^j_t , \qquad t < \blowuptimegen, \qquad  \textnormal{for all } j \in \{1,\ldots,n\} , 
\end{align}
where $W^1_t, \ldots, W^n_t$ are independent standard Brownian motions with respect to $\prob^{\partfn^{a}}_t$.

\noindent 
In~particular, if $a=1/\kappa$, then $\bDyson_t$ satisfies Equation~\eqref{eqn:driving_spde} up to the stopping time $\blowuptimegen$. 
\end{corollary}

\begin{remark}
Let $\DPotential=-\log \partfn$ be a Dyson-type potential. If $a>0$, then $a\, \DPotential = -\log \big(\partfn ^a\big) $ is also a Dyson-type potential. Furthermore, if~\eqref{eq:differential_inequalities} holds for $\partfn$ with constants $\constmult, \constaddnew$, then~\eqref{eq:differential_inequalities} holds for $\partfn^a$ with constants $a\constmult, a^2\constaddnew$.
In particular, the potential obtained from $ \partfn=\nradpartfn^2$ is a Dyson-type potential with $\constmult=4$.
\end{remark}

\begin{remark}\label{rmk:HL_Bessel}
Letting $\partfn=\nradpartfn^\two$ and $a=\ONE/\kappa$ in \Cref{cor:power_of_partfn} as in~\eqref{eq:DBM_choices} so that
\begin{align*}
(\nradpartfn (\btheta))^{2/\kappa} = \prod_{1\leq i<j\leq n} \sin^{4/\kappa}\Big(\frac{\theta^j-\theta^i}{2}\Big) , \qquad \btheta \in \chamber ,
\end{align*}
yields Equation~\eqref{eqn:driving_spde_radial}. 
This result is contained in \cite{Healey-Lawler:N_sided_radial_SLE}, though the authors use a different parameterization convention. Notice that $(\nradpartfn (\btheta))^{2/\kappa}$ is the square of the multiradial partition function $\nradpartfn^{(\kappa)} (\btheta)$~\eqref{eq:multiradial_partition_function}. (See also \Cref{rmk:def_justification}.)
\end{remark}

For Dyson-type potentials, the system~\eqref{eqn: Dyson_general} is valid up until the collision time $ \blowuptime$.

\begin{prop}\label{prop:martingale(new)}
If $\DPotential$ is a potential of Dyson type \textnormal{(}as in \Cref{def:Dyson_type_potential}\textnormal{)}, then \Cref{cor:power_of_partfn} holds with
$\blowuptimegen= \blowuptime$. 
Moreover, the following properties hold.
\begin{enumerate}[leftmargin=*]
\item \label{item:martingale(new)1}
For each $T < \blowuptime$, the stopped process $(M^{\partfn, a}_{t \wedge T})_{t \geq 0}$ is a uniformly integrable $\bP$-martingale. 

\item \label{item:martingale(new)2}
If $a\geq \frac{1}{2\constmult}$, then we have $\prob^{\partfn^{a}}_t$-almost surely $\blowuptime=\infty$. 
In particular, the unique strong solution $(\bDyson_t)_{t \geq 0}$ to~\eqref{eqn: Dyson_general} exists for all time.
\end{enumerate}
\end{prop}

The proof of \Cref{prop:martingale(new)} follows \Cref{prop:Bessel_measure}.

Going forward, let $\Pr^{\kappa}_t := \prob^{\partfn^{1/\kappa}}_t$ 
be the measure absolutely continuous with respect to $\bP_t$ with Radon-Nikodym derivative 
obtained from the martingale~\eqref{eqn:gen_local_mart_power} with $a=\ONE/\kappa$:  
\begin{align}\label{eq:the_good_measure}
\frac{\ud \Pr^{\kappa}_t}{\ud \bP_t} 
= \frac{\mgle_t}{\mgle_0} , \qquad t < \blowuptime ,
\qquad \textnormal{where} \quad \mgle_t := M^{\partfn, \ONE/\kappa}_t, 
\end{align}
where we keep the dependence on the Dyson-type potential $\DPotential = - \log \partfn$ implicit throughout.

\begin{cor}\label{prop:Bessel_measure} 
Suppose that $\DPotential = -\log \partfn$ is a potential of Dyson type \textnormal{(}as in \Cref{def:Dyson_type_potential}\textnormal{)}. 
If $\kappa \in (0, \two\constmult]$, 
then $\bDyson$ as in~\eqref{eqn:def_U_kappa} is the unique strong solution to the system of SDEs~\eqref{eqn:driving_spde} in the measure $\Pr^{\kappa}$ and $\blowuptime=\infty$ almost surely.  

\noindent 
In particular, if $\partfn=\nradpartfn^\two$ as in~\eqref{eq:DBM_choices} and $\kappa \in (0,8]$, then $\bDyson$ satisfy~\eqref{eqn:time_change_bessel} with $\alpha=4/\kappa$.
\end{cor}

The particular case of $\partfn=\nradpartfn^\two$ 
is covered by~\cite{Healey-Lawler:N_sided_radial_SLE} (see also~\cite{AGZ:An_introduction_to_random_matrices}), 
though the argument there is different (namely, \cite{Healey-Lawler:N_sided_radial_SLE} uses a comparison to the usual Bessel process). 

\begin{proof}
\Cref{prop:martingale(new)} implies that under $\Pr^{\kappa}$, the collision time~\eqref{eq: blowup time} is infinite when 
$\kappa \in (0, \two\constmult]$. 
In particular, setting $a=\ONE/\kappa$ 
in~\eqref{eqn: Dyson_general}, the process $\bDyson$ indeed satisfies the system of SDEs~\eqref{eqn:driving_spde}, 
where $W^1_t, \ldots, W^n_t$ are independent Brownian motions with respect to the measure $\Pr^{\kappa}$.
The SDEs~\eqref{eqn:time_change_bessel} for $\partfn=\nradpartfn^\two$ follow because $\constmult=4$ in this case.
\end{proof}

\begin{proof}[Proof of \Cref{prop:martingale(new)}]
First, we note that Novikov's criterion holds: 
\begin{align*}
\bE \Big[\exp \Big( \int_0^t  \sum_{j=1}^n \big( \newphi_\partfn^j(\bDyson_t)\big)^2 \ud t \Big) \Big] < \infty , \qquad \textnormal{for all } t < \blowuptime 
\end{align*}
(see, e.g.~\cite[Proposition~(1.15)~Chapter~VIII]{Revuz-Yor:Continuous_martingales_and_Brownian_motion}). 
Indeed, as $\partfn$ is a $C^2$ function, a blowup can only happen when $(\partfn(\bDyson))^a = 0$, but the asymptotic bounds~\eqref{eqn:growth_conditions} imply that $(\partfn(\bDyson))^a >0$ before the collision time $\blowuptime$. 
This shows Item~\ref{item:martingale(new)1}. 

It remains to prove Item~\ref{item:martingale(new)2}. 
The idea of the proof is similar to the proof in~\cite[Lemma~4.3.3]{AGZ:An_introduction_to_random_matrices} that Dyson Brownian motion is noncolliding for $\beta\geq 1$, but our use of the potential $\DPotential$ both streamlines and generalizes the argument. 
First, we introduce a cutoff to obtain a system with uniformly Lipschitz drift. For each $\epsilon>0$, we define the auxiliary system $\baux_t= \baux_t{(\epsilon)} = (\aux^1_t{(\epsilon)}, \ldots, \aux^n_t{(\epsilon)})$ by
\begin{align}\label{eqn:aux_Lip_system}
\ud \aux^j_t{(\epsilon)} = a \kappa \, \smoothing^j (\baux{(\epsilon)})  \ud t 
+ \sqrt \kappa \ud W^j_t , \qquad \textnormal{for all } j \in \{1,\ldots,n\} , 
\end{align}
where $W^1_t, \ldots, W^n_t$ are independent standard Brownian motions, 
and (using notation~\eqref{eqn:growth_conditions}) 
\begin{align}
\smoothing^j (\btheta) :=
\begin{cases}
\newphi^j(\btheta) = \partial_j  \log \partfn (\mathbf \btheta), & \btheta \in \chamberCl^\epsilon , \\[.5em]
\displaystyle \min \Big\{ \newphi^j(\btheta) , \, \max_{\bvartheta \in \chamberCl^\epsilon} \newphi^j(\bvartheta) \Big\} , & \btheta \notin \chamberCl^\epsilon .
\end{cases}
\end{align}
Notice that for each $\epsilon > 0$, the processes $\baux_t{(\epsilon)}$ and $\bDyson_t$ agree until the exit time
\begin{align*} 
\tau_\epsilon :=\;& \inf \big\{t \ge 0 \; | \; \baux_t{(\epsilon)} \notin \chamberCl^\epsilon  \big\} \; \leq \; \blowuptime .
\end{align*}
Thus, in order to prove Item~\ref{item:martingale(new)2}, it suffices to show that $\prob^{\partfn^{a}}_t \big[ \tau_\epsilon \leq t \big]\to 0$ as $\epsilon \to 0$. 
To this end, we will instead show that for each $t \geq 0$, we have $\prob^{\partfn^{a}}_t \big[ T_{\upper} \leq t \big]\to 0$ as $\epsilon \to 0$, where
\begin{align*}
T_{\upper} := \inf \big\{t\geq 0 \; | \; \DPotential(\baux_t{(\epsilon)})\geq \upper \big\} \; \leq \; \tau_\epsilon , \qquad
\upper := \underset{\bvartheta \in \chamberCl^\epsilon}{\max} \, \DPotential(\bvartheta) .
\end{align*}
By the first property in~\eqref{eqn:growth_conditions}, we know that $\smash{\underset{\epsilon \to 0}{\lim} \,\upper = \infty}$. 
Also, for all $t \leq T_{\upper} \leq \tau_\epsilon$, we have 
\begin{align*}
\partial_j\, \DPotential(\blambda_t) = - \newphi^j(\blambda_t),
\qquad \textnormal{and} \qquad
\partial_j^2 \, \DPotential (\blambda_t) =  - \partial_j \newphi^j(\blambda_t) .
\end{align*}
Therefore, It\^o's formula gives 
\begin{align} \label{eqn:Lyapunov_system}
\ud \DPotential(\blambda_t) 
= \; & - \sum_{j=1}^n  \newphi^j(\blambda_t) \Big( a \kappa \newphi^j (\blambda_t) \ud t + \sqrt \kappa \ud W^j_t\Big)
- \frac{ \kappa}{2} \, \sum_{j=1}^n  \partial_j \newphi^j(\blambda_t)  \ud t \nonumber \\
= \; &  -a \kappa \sum_{j=1}^n \big( \newphi^j(\blambda_t) \big)^2 \ud t
-  \frac{\kappa}{2} \, \sum_{j=1}^n  \partial_j \newphi^j(\blambda_t)  \ud t 
+ \underbrace{ \sqrt \kappa \, \sum_{j=1}^n  \newphi^j(\blambda_t)  \ud W^j_t}_{=: \; \ud N_t},
\end{align}
where $(N_{t \land T_{\upper}})_{t \geq 0}$ is a $\prob^{\partfn^{a}}_t$-martingale with zero expectation. 
Applying the upper bound in~\eqref{eq:differential_inequalities}, the drift term in~\eqref{eqn:Lyapunov_system} simplifies to 
\begin{align*}
\frac{\kappa}{2} \sum_{j=1}^n \Big( \! -2 a \big(\newphi^j(\blambda_t)\big)^2 -  \, \partial_j \newphi^j(\blambda_t) \Big) 
\;\leq \; & \frac{\kappa \constaddnew}{2 \constmult} +\frac{\kappa}{2}\sum_{j=1}^n \Big( \!-2 a \big(\newphi^j(\blambda_t)\big)^2 +\frac{1}{\constmult} \, \big( \newphi^j(\blambda_t) \big)^2  \Big) \\
\;= \;\; & \frac{\kappa \constaddnew}{2\constmult} 
\; + \; \frac{\kappa}{2}  \sum_{j=1}^n \Big( \frac{1}{\constmult} - 2 a  \Big)\, \big( \newphi^j(\blambda_t) \big)^2 .
\end{align*}
If $a\geq \frac{1}{2 \constmult}$, then the last term is nonpositive, so the drift is upper bounded and
\begin{align*}
\bE^{\partfn^{a}}_t  [ \,\DPotential (\blambda_{t \land T_{\upper}}) \,] 
\; \leq \; \frac{\kappa \constaddnew}{2 \constmult} \,  \bE^{\partfn^{a}}_t  [t \land T_{\upper} ] + \DPotential(\blambda_0) 
\; \leq \; \frac{\kappa \constaddnew}{2 \constmult} \,  t + \DPotential(\blambda_0),
\end{align*}
where $\bE^{\partfn^{a}}_t $ denotes expectation with respect to $ \prob^{\partfn^{a}}_t $. 
Now, if $t\geq T_{\upper}$, then we have $\DPotential(\blambda_{t\land T_{\upper}}) = \DPotential(\blambda_{T_{\upper}}) \geq \upper$, so we find that
\begin{align*}
 \prob^{\partfn^{a}}_t  \big[ T_{\upper} \leq t \big]
\; \leq \;\; & \frac{ \bE^{\partfn^{a}}_t  \big[ \, \DPotential(\blambda_{t\land T_{\upper}}) \one \{T_{\upper} \leq t\} \, \big]}{\upper}
\\ 
\; \leq \; \; & \frac{ \bE^{\partfn^{a}}_t \big[ \, \DPotential(\blambda_{t\land T_{\upper}}) \, \big]}{\upper}
\; \leq \; \frac{1}{\upper} \Big( \frac{\kappa \constaddnew}{2 \constmult}  \,  t + \DPotential(\blambda_0) \Big) 
\quad \overset{\epsilon \to 0}{\longrightarrow} \quad 0,
\end{align*}
which concludes the proof. 
\end{proof}

\subsection{Dyson-Dirichlet energy and basic properties}
\label{subsec: Multiradial Dirichlet Energy}

From Proposition~\ref{prop:Bessel_measure}, we learn that the Dyson-type process $\bDyson_t$ solving~\eqref{eqn:driving_spde}, 
equivalent to the $n$-radial Bessel process by~\eqref{eqn:time_change_bessel} --- 
and to the Dyson Brownian motion on the circle via $\exp(\ii \bDyson_t) =
\exp(2 \ii \, \bBessel_{\kappa t/4}^\alpha )$ --- 
is a Girsanov transform of $n$-dimensional standard Brownian motion $\bB$. 
From Schilder's classical theorem, one readily obtains an LDP for $n$-dimensional Brownian motion $\bB$, whose components are independent (Theorem~\ref{thm:Schilder}).

Denote by $\nCameronMartinSpace{\bigT]}{\boldsymbol{0}}$ the space of continuous functions $\btheta \colon [0,\bigT] \to \bR^n$ started at $\btheta_0 = \boldsymbol{0}$, 
equipped with the supremum norm $\smash{\supnorm{\btheta} := \underset{t \in [0,\bigT]}{\sup} \, |\btheta_t|}$. The rate function in Schilder's theorem is the $n$-dimensional Dirichlet energy 
\begin{align}\label{eqn:nDirichlet_energy_def}
\nDenergy_{\bigT}(\btheta) 
:= \sum_{j=1}^n \nDenergy_{\bigT}(\theta^j) , 
\qquad \btheta = (\theta^1, \ldots, \theta^n) \in \nCameronMartinSpace{\bigT]}{\boldsymbol{0}},
\end{align}
where $\nDenergy_{\bigT}(\theta)$ is the Dirichlet energy of $\theta \in \CameronMartinSpace{\bigT]}{0}$\textnormal{:}
\begin{align}\label{eqn:Dirichlet_energy_def}
\nDenergy_{\bigT}(\theta)
:= \; &
\begin{dcases} 
\tfrac{1}{2} \int_0^\bigT \big| \tfrac{\ud}{\ud t} \theta_t \big|^2 \ud t ,
\quad & \textnormal{if } \theta  
\textnormal{ is absolutely continuous on $[0,\bigT]$,} \\
\infty , & \textnormal{otherwise.}
\end{dcases}
\end{align}

\begin{remark}
The (Cameron-Martin) space of absolutely continuous functions on $[0,\bigT]$ with square-integrable derivative
coincides with the Sobolev space $\nWonetwoSpace{\bigT]}{\boldsymbol{0}}$ that has the norm
\begin{align*}
|| \btheta ||_{1,2; [0,\bigT]} 
:= 
\bigg( \sum_{j=1}^n \int_0^\bigT |\theta^j_t|^2 \ud t 
+ \sum_{j=1}^n \int_0^\bigT \big| \tfrac{\ud}{\ud t} \theta^j_t \big|^2 \ud t
\bigg)^{1/2} ,
\end{align*}
thanks to the ACL characterization of Sobolev spaces~\cite[Lemma~A.5.2]{AIM:Elliptic_partial_differential_equations_and_quasiconformal_mappings_in_the_plane}
(note that as such, this fails for $\bigT = \infty$). 
We will thus identify all these spaces:
\begin{align*} 
\nHoneSpace{\bigT]}{\boldsymbol{0}} 
= \nWonetwoSpace{\bigT]}{\boldsymbol{0}}  
= \big\{ \btheta \in \nCameronMartinSpace{\bigT]}{\boldsymbol{0}} \; | \; \nDenergy_{\bigT} (\btheta) < \infty \big\}
\end{align*}
Let us also note that if $\btheta \in \nHoneSpace{\bigT]}{\boldsymbol{0}}$, then $\btheta$ is $\tfrac{1}{2}$-H\"older continuous by Morrey's inequality (but may have arbitrarily large H\"older norm), cf.~\cite[Theorem~4, page~280]{Evans:Partial_Differential_Equations}.
\end{remark}

\begin{theorem}[Direct consequence of Schilder's theorem; see, e.g.,~\cite{Dembo-Zeitouni:Large_deviations_techniques_and_applications},~Chapter~5.2]\label{thm:Schilder}

Fix $\bigT \in (0,\infty)$. 
The process $\big(\sqrt \kappa \, \bB_t \big)_{t\in [0,\bigT]}$ satisfies the following LDP 
in $\nCameronMartinSpace{\bigT]}{\boldsymbol{0}}$, 
with good rate function $\nDenergy_{\bigT}$\textnormal{:}

For any closed subset $\closed$ and open subset $\open$ of $\nCameronMartinSpace{\bigT]}{\boldsymbol{0}}$, we have
\begin{align*}
\limsup_{\kappa \to 0+} \kappa \log \bP \big[ \sqrt \kappa \, \bB_{[0, \bigT]} \in \closed \big] \leq - \inf_{\btheta \in \closed} \nDenergy_{\bigT}(\btheta) , \\
\liminf_{\kappa \to 0+} \kappa \log \bP \big[ \sqrt \kappa \, \bB_{[0, \bigT]} \in \open \big] \geq - \inf_{\btheta \in \open} \nDenergy_{\bigT}(\btheta).
\end{align*}
\end{theorem}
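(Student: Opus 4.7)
My plan is to obtain this as the $n$-dimensional Schilder theorem by tensorization to the scalar case. Since the components $B^1, \ldots, B^n$ are independent under $\bP$, the law of $\sqrt{\kappa}\,\bB_{[0,\bigT]}$ is a product measure on $\nCameronMartinSpace{\bigT]}{\boldsymbol{0}}$. A standard tensorization lemma for LDPs of independent Polish-valued random variables (see, e.g., \cite[Exercise~4.2.7]{Dembo-Zeitouni:Large_deviations_techniques_and_applications}) states that if each $\sqrt{\kappa}B^j_{[0,\bigT]}$ satisfies the scalar LDP with good rate function $\nDenergy_\bigT(\theta^j)$, then the product satisfies an LDP with rate function $\sum_{j=1}^n \nDenergy_\bigT(\theta^j) = \nDenergy_\bigT(\btheta)$, where the second equality is the definition~\eqref{eqn:nDirichlet_energy_def}. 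Hence it suffices to prove the scalar ($n=1$) case.

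For the scalar upper bound, I would discretize. Fix a mesh $0 = t_0 < t_1 < \cdots < t_m = \bigT$ and let $\pi_m$ denote piecewise linear interpolation on this mesh. The finite-dimensional marginal $(\sqrt{\kappa}B_{t_k})_{k=1}^{m}$ is jointly Gaussian, so a direct exponential Chebyshev computation yields an LDP on $\bR^m$ with rate function
\begin{align*}
(x_1, \ldots, x_m) \longmapsto \tfrac{1}{2} \sum_{k=1}^{m} \frac{(x_k - x_{k-1})^2}{t_k - t_{k-1}},
\end{align*}
which equals $\nDenergy_\bigT$ evaluated on the piecewise linear interpolant with these nodal values. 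Exponential tightness of $\sqrt{\kappa}B$ in $\CameronMartinSpace{\bigT]}{0}$ at speed $1/\kappa$ then follows from the reflection principle, which produces Gaussian tail bounds on the modulus of continuity of Brownian motion that are exponentially small at speed $1/\kappa$. The inverse contraction principle lifts the finite-dimensional bound to a path-space upper bound, with limiting rate function equal to the supremum over all meshes of the piecewise linear Dirichlet energy, which by monotone convergence is precisely $\nDenergy_\bigT$.

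For the lower bound, I would apply a Cameron-Martin translation. Given $\theta^* \in \HoneSpace{\bigT]}{0}$ and a neighborhood $U$ of $\theta^*$, Girsanov's theorem identifies the law of $\sqrt{\kappa}B + \theta^*$ as absolutely continuous with respect to the law of $\sqrt{\kappa}B$, with Radon-Nikodym derivative of the form $\exp\bigl( \kappa^{-1/2} \int_0^\bigT \tfrac{\ud}{\ud t}\theta^*_t \, \ud B_t - (2\kappa)^{-1}\, \nDenergy_\bigT(\theta^*) \bigr)$. Since $\sqrt{\kappa}B \to 0$ in probability as $\kappa \to 0+$, the event $\{\sqrt{\kappa}B + \theta^* \in U\}$ has probability tending to $1$, and a Markov-type estimate on the It\^o integral together with Jensen's inequality yields $\liminf_{\kappa \to 0+} \kappa \log \bP[\sqrt{\kappa}B \in U] \geq -\nDenergy_\bigT(\theta^*)$; optimizing over $\theta^* \in U$ closes the argument. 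Goodness of $\nDenergy_\bigT$ is immediate, since Cauchy-Schwarz gives $|\theta_s - \theta_t| \leq \sqrt{2\,\nDenergy_\bigT(\theta)\,|s-t|}$, so sublevel sets are equi-H\"older and therefore compact in $C_0$ by Arzel\`a-Ascoli, while lower semicontinuity follows from weak $L^2$ lower semicontinuity of the derivative. The main delicacy in this program is the upper bound: one must simultaneously verify exponential tightness at the correct speed $1/\kappa$ and confirm that the Dirichlet energies of piecewise linear interpolants converge monotonically upward to $\nDenergy_\bigT$ along any exhausting sequence of meshes, both of which are standard but essential inputs.
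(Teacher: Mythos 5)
The paper does not actually prove this theorem: it is stated with the parenthetical ``Direct consequence of Schilder's theorem'' and a citation to Dembo--Zeitouni, Chapter~5.2, and no proof is given in the text. Your sketch is a correct reconstruction of the standard proof of Schilder's theorem, and it is essentially the argument contained in the cited reference: reduce to $n=1$ by tensorization of independent LDPs; prove the scalar upper bound via finite-dimensional Gaussian marginals, exponential tightness at speed $1/\kappa$ (from Gaussian tail bounds on the modulus of continuity), and the inverse contraction/projective-limit principle, with the rate function recovered as the monotone supremum of piecewise-linear Dirichlet energies; prove the lower bound by a Cameron--Martin translation and a Girsanov density estimate; and establish goodness via the $\tfrac12$-H\"older bound from Cauchy--Schwarz and Arzel\`a--Ascoli. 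So the two ``approaches'' coincide, the only difference being that the paper outsources the proof to the reference whereas you have written it out. The reduction to $n=1$ and the speed bookkeeping ($\epsilon = \sqrt{\kappa}$, speed $1/\epsilon^2 = 1/\kappa$) are both handled correctly; the only place to be a little more careful if you were to flesh this out is the lower-bound step, where the Girsanov density and the event $\{\sqrt{\kappa}\bB \in U\}$ need to be combined precisely (e.g.\ by restricting to a small ball around $\theta^*$ and using Jensen's inequality conditionally), but the idea as stated is right.
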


A convenient tool for proving an LDP when a family of probability measures is absolutely continuous with respect to another family for which an LDP is already known is provided by the classical Varadhan's lemma.
We will use it in combination with Theorem~\ref{thm:Schilder}.

\begin{lemA}
[Varadhan's lemma; see, e.g.,~\cite{Dembo-Zeitouni:Large_deviations_techniques_and_applications}, Lemmas~4.3.4 and 4.3.6]
\label{lemma:Varadhan}

Suppose that the probability measures $(\bP^\kappa)_{\kappa>0}$ satisfy an LDP in a topological space $\spaceX$
with good rate function $E$. 
Let $\Phi \colon \spaceX \to \bR$
be a function bounded from above. 
Then, the following hold.
\begin{enumerate}
\item \label{item1_Var} 
If $\Phi$ is upper semicontinuous, then for any closed subset $\closed$ of $\spaceX$, 
\begin{align*}
\limsup_{\kappa\to 0+} \kappa \log \bE^\kappa \Big[\exp\Big(\frac{1}{\kappa} \Phi(X) \Big) \one \{X \in \closed \} \Big]
\leq - \inf_{x \in \closed} \big(E(x) - \Phi(x)\big) .
\end{align*}
\item \label{item2_Var} 
If $\Phi$ is lower semicontinuous, then for any open subset $\open$ of $\spaceX$,
\begin{align*}
\liminf_{\kappa\to 0+} \kappa \log \bE^\kappa \Big[\exp\Big(\frac{1}{\kappa} \Phi(X) \Big) \one \{X \in \open \} \Big]
\geq - \inf_{x \in \open} \big( E(x) - \Phi(x)\big).
\end{align*}
\end{enumerate}
\end{lemA}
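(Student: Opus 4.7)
The plan is to handle the two parts separately, following the classical template for Varadhan's integral lemma and exploiting the good rate function and the LDP assumption on $(\bP^\kappa)_{\kappa>0}$.

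For the lower bound in item~\ref{item2_Var}, the argument is direct. Fix $x_0 \in \open$ with $E(x_0) - \Phi(x_0) < \infty$ (otherwise nothing to prove). Since $\Phi$ is lower semicontinuous and $\open$ is open, for any $\epsilon > 0$ there is an open neighborhood $U \subset \open$ of $x_0$ on which $\Phi > \Phi(x_0) - \epsilon$. Then
\begin{align*}
\bE^\kappa \Big[\exp\Big(\tfrac{1}{\kappa}\Phi(X)\Big)\one\{X\in\open\}\Big]
\; \geq \; e^{(\Phi(x_0)-\epsilon)/\kappa}\,\bP^\kappa(X\in U).
\end{align*}
Applying the LDP lower bound to the open set $U$ yields $\liminf_{\kappa\to 0+}\kappa\log \bP^\kappa(X\in U) \geq -E(x_0)$, and combining these gives $\liminf \geq \Phi(x_0) - \epsilon - E(x_0)$. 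Sending $\epsilon \to 0$ and then taking the supremum over $x_0 \in \open$ yields the bound $-\inf_{x\in\open}(E(x)-\Phi(x))$.

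For the upper bound in item~\ref{item1_Var}, the plan is to slice the integral along super-level sets of $\Phi$. Write $M := \sup_{x\in\spaceX} \Phi(x) < \infty$. Fix $\delta > 0$ and a large threshold $N > 0$, and partition the interval $(-N, M]$ into finitely many half-open intervals $[a_i, a_i + \delta)$ together with the tail $\{\Phi \leq -N\}$. On each slice $S_i := \closed \cap \{a_i \leq \Phi < a_i + \delta\}$, we have $\exp(\Phi(X)/\kappa) \leq e^{(a_i+\delta)/\kappa}$ pointwise, and the set $\closed \cap \{\Phi \geq a_i\}$ is closed because upper semicontinuity of $\Phi$ makes $\{\Phi \geq a_i\}$ closed. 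The LDP upper bound for this closed set together with the bound $\Phi \geq a_i$ on $S_i$ gives
\begin{align*}
\limsup_{\kappa\to 0+}\kappa\log\bE^\kappa\Big[\exp\Big(\tfrac{1}{\kappa}\Phi(X)\Big)\one\{X\in S_i\}\Big]
\; \leq \; \delta - \inf_{x\in\closed}(E(x)-\Phi(x)),
\end{align*}
while the tail contributes at most $e^{-N/\kappa}$. Summing the finitely many pieces and using the elementary identity $\limsup_{\kappa\to 0+}\kappa\log\sum_i a_i^\kappa = \max_i \limsup_{\kappa\to 0+}\kappa\log a_i^\kappa$, then sending $N\to\infty$ and $\delta\to 0$, produces the claimed bound.

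The only delicate point is reconciling the two regimes of $\Phi$ (values near $M$ and very negative values) in the upper bound; boundedness above of $\Phi$ ensures the partition is finite, and the tail estimate $e^{-N/\kappa}$ makes the contribution from $\{\Phi \leq -N\}$ negligible as $N \to \infty$. The roles of upper and lower semicontinuity are essential and symmetric: USC guarantees that super-level sets $\{\Phi \geq a\}$ are closed, so that the LDP upper bound applies on each slice; LSC guarantees that $\Phi$ stays close to its value at $x_0$ on an open neighborhood, which combines with the LDP lower bound for open sets. A merely measurable $\Phi$ would break either half of the argument.
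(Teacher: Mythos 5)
The paper does not prove this lemma: it is quoted as a known result with the citation to Dembo--Zeitouni, Lemmas~4.3.4 and 4.3.6, so there is no in-paper proof to compare against. Your argument is the standard proof of those lemmas and is correct: for the lower bound you localize around a near-optimal $x_0 \in \open$, shrink a neighborhood so that lower semicontinuity keeps $\Phi$ within $\epsilon$ of $\Phi(x_0)$, and apply the LDP lower bound to that open neighborhood; for the upper bound you slice $\closed$ along the super-level sets $\{\Phi \geq a_i\}$ (which upper semicontinuity makes closed), bound the integrand on each slice by $e^{(a_i+\delta)/\kappa}$, apply the LDP upper bound to each closed slice, control the tail $\{\Phi \leq -N\}$ by $e^{-N/\kappa}$, and invoke the finite-sum version of the Laplace principle before sending $N\to\infty$ and $\delta\to 0$. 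Two minor remarks: the identity you quote for $\limsup \kappa\log\sum_i$ holds as an equality only for a \emph{fixed finite} number of summands (here the number of slices is $O((M+N)/\delta)$, fixed before $\kappa\to 0+$, so this is fine), and in your last step you should check that $(a_i+\delta)-\inf_{\closed\cap\{\Phi\geq a_i\}}E \leq \delta - \inf_{\closed}(E-\Phi)$, which follows because $\Phi(x)\geq a_i$ on that slice gives $a_i - E(x) \leq \Phi(x)-E(x)$; you assert this bound without spelling out the small computation, but the claim is correct.
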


In order to apply Varadhan's lemma in the measure $\Pr^{\kappa}$ defined in~\eqref{eq:the_good_measure} and appearing in \Cref{prop:Bessel_measure}, we first find a suitable function $\Phi^{\kappa}_t$ 
so that the Radon-Nikodym derivative from the 
martingale $\mgle := M^{\partfn, \ONE/\kappa}_t$ defined in~\eqref{eqn:gen_local_mart_power} with $a=\ONE/\kappa$ takes the form 
\begin{align} \label{eq: desired RN}
\frac{\mgle_t}{\mgle_0} =
\exp \bigg( \frac{1}{\kappa} \Phi^{\kappa}_t (\bB) \bigg) , \qquad t \geq 0 . 
\end{align}
We see that, because 
\begin{align*}
\frac{\Delta (\partfn (\bDyson_t))^{\ONE/\kappa}}{(\partfn(\bDyson_t))^{\ONE/\kappa}}
= \; & \sum_{j=1}^n \Big( ( \partial_j \newphi^j)(\bDyson_t)
+ \frac{\ONE}{\kappa} \big( \newphi^j(\bDyson_t) \big)^2 \Big) , \qquad t \geq 0 ,
\end{align*}
\eqref{eq: desired RN} holds with the following definition of the interaction functional $\Phi^{\kappa}_t (\bB) =: \Phi^{\kappa}_t (\bDyson)$. 
\begin{df} \label{def:Phi_kappa}
Suppose that $\DPotential = -\log \partfn \geq 0$ is a potential of Dyson type \textnormal{(}as in \Cref{def:Dyson_type_potential}\textnormal{)}\footnote{The nonnegativity follows from the assumption that any potential $\DPotential = - \log \partfn$ of Dyson type is obtained from a function $\partfn \colon \chamber \to (0,1]$. (Note that this does not hold if we include a nonzero spiraling rate.)}. 
Let $\kappa \geq 0$ and $\bigT \in (0,\infty)$. 
The \emph{Dyson-type interaction functional} is
\begin{align}\label{eqn:Phi_kappa}
\begin{split}
\Phi^{\kappa}_\bigT = \PhiGen_\bigT 
\; \colon \; & \BesselSpace{\bigT]}{\btheta_0} \to \bR , \\
\Phi^{\kappa}_\bigT(\btheta) 
:= \; & \DPotential(\btheta_0) -  \DPotential(\btheta_\bigT)
\; - \; \frac{1}{2} \sum_{j=1}^n \int_0^\bigT \Big( 
\kappa \, ( \partial_j \newphi^j)(\btheta_s)
+ \big( \newphi^j(\btheta_s) \big)^2 \Big) \ud s .
\end{split}
\end{align} 
\end{df}

\begin{remark}
In the case where the potential is given by $\partfn=\nradpartfn^\two$ defined in~\eqref{eq:DBM_choices}, we have
\begin{align}\label{eqn:Phi_identity}
\PhiNrad_t(\btheta) 
= \;& 2 \log \frac{\nradpartfn(\btheta_t)}{\nradpartfn(\btheta_0)} + \kappa \frac{n (n^2 - 1)}{6} \, t + \frac{(\kappa-4)}{2} \int_0^t \sum_{j=1}^n (\phi^j(\btheta_s))^2\ud s ,
\end{align}
with $\phi^j$ as in~\eqref{eq:DBM_choices}. This follows from~\eqref{eqn:trig_identity}, or equivalently,~\cite[Lemma~5.1]{Healey-Lawler:N_sided_radial_SLE}.
\end{remark}

In the next lemma, we gather useful properties of the functional $\Phi^{\kappa}_\bigT$.

\begin{lem} \label{lem: limit of Phi kappa}
Fix $\bigT \in (0,\infty)$ and $\btheta_0 \in \chamber$. 
Equation~\eqref{eqn:Phi_kappa} defines a continuous functional with respect to the metric~\eqref{eqn:sup-metric}, and for each $\btheta \in \BesselSpace{\bigT]}{\btheta_0}$, we have 
\begin{align} \label{eq: Phi_limit} 
\Phi^{\kappa}_\bigT (\btheta) + \kappa  \, \frac{\constaddnew T}{2\constmult} \, 
= \,  \Phi^{0}_\bigT(\btheta) - \frac{\kappa}{\two} \bigg( \sum_{j=1}^n \int_0^\bigT (\partial_j \newphi^j)(\btheta_s) \ud s 
- \frac{\constaddnew T}{\constmult} \bigg)
\quad \overset{\kappa \to 0+}{\longrightarrow} \quad 
\Phi^{0}_\bigT(\btheta) ,
\end{align} 
and this limit is monotonically decreasing. 
Furthermore, 
 $\Phi^{0}_\bigT$ is bounded from above as
\begin{align} \label{eqn: Phi0_upper_bound}
\Phi^{0}_\bigT(\btheta) \, \leq \,    \DPotential(\btheta_0) 
 = -  \log \partfn (\btheta_0) 
 , \qquad \btheta \in \BesselSpace{\bigT]}{\btheta_0} .
\end{align}
\end{lem}

\begin{proof}
By the lower bound in~\eqref{eq:differential_inequalities} we have $(\sum_{j=1}^n\partial_j\newphi^j )- \frac{\constaddnew}{\constmult} \le 0$, 
so the limit~\eqref{eq: Phi_limit} is monotonically decreasing.
The bound~\eqref{eqn: Phi0_upper_bound} follows from~\eqref{eqn:Phi_kappa} and the non-negativity of the Dyson-type potential $\DPotential$. 
The continuity is clear. 
\end{proof}

The following technical tail estimate is needed in the proof of \Cref{thm:Bessel_LDP}. 
To state it, we use the notation $\chamber^\epsilon := \{\btheta\in \chamber \; | \; \DeltaMin_{\btheta} > \epsilon \}$ and $\smash{\DeltaMin_{\btheta} 
:= \underset{1\leq j\leq n}{\min} \, \big| \theta^{j+1} - \theta^j \big| \in \big[0,  \tfrac{2\pi}{n} \big]}$.

\begin{lem} \label{lem:bad event bound}
Fix $\kappa \in (0, \constmult]$. 
Fix $\epsilon > 0$ and consider the stopping time 
\begin{align} \label{eq:stopping time}
\tau_\epsilon :=\;& \inf \big\{t \ge 0 \; | \; \bDyson_t \notin \chamberCl^\epsilon \big\} \; \leq \; \blowuptime , 
\end{align}
for the process $\bDyson_t = (\Dyson^{1}_t,\ldots, \Dyson^n_t)$ satisfying the SDEs~\eqref{eqn:driving_spde} 
under the measure $\Pr^{\kappa}$ in~\eqref{eq:the_good_measure}. 
For each initial configuration $\bDyson_0 = \btheta_0 \in \chamber$, 
there exist constants $\upbd = \upbd(\epsilon, \btheta_0, \partfn) \in (0,\infty)$
and $C = C(n, \bigT, \partfn) \in (0,\infty)$ 
independent of $\kappa$ such that $\underset{\epsilon \to 0}{\lim} \, \upbd(\epsilon, \btheta_0, \partfn) = \infty$ and 
\begin{align} \label{eqn:bad event bound}
\Pr^{\kappa}\big[ \tau_\epsilon \le \bigT\big]
\le C \, e^{-\upbd/\kappa} ,
\end{align}
\end{lem}

\begin{proof}
Recall (cf.~\Cref{prop:Bessel_measure}) 
that $\Pr^{\kappa}$ is the probability measure absolutely continuous with respect to $\bP$ with Radon-Nikodym derivative~\eqref{eq: desired RN}, given by the martingale (rather than simply a local martingale)
$\mgle_t = M^{\partfn, \ONE/\kappa}_t$ from \Cref{prop:martingale(new)}. 
Therefore, since $\tau_{\epsilon}\wedge \bigT$ is a stopping time bounded by $\bigT$, by  the optional stopping theorem (OST), we have
\begin{align} \label{eqn:OST}
\begin{split}
\bE_\bigT
\bigg[ \frac{\mgle_\bigT}{\mgle_0} \;
\one \{\tau_\epsilon \le \bigT\} \bigg]  
= \; & \bE_\bigT\bigg[ \bE_\bigT \bigg[ \frac{\mgle_{\bigT} }{\mgle_0} \; \one\{\tau_\epsilon \le \bigT\} \; \Big \vert \; \mathcal F_{\tau_\epsilon \wedge \bigT}\; \bigg]\bigg] \\
= \; & 
\bE_\bigT\bigg[ \frac{\mgle_{\tau_{\epsilon} \wedge \bigT} }{\mgle_0} \; \one\{\tau_\epsilon \le \bigT\}\bigg]
= \; 
\bE_\bigT\bigg[ \frac{\mgle_{\tau_{\epsilon} } }{\mgle_0} \; \one\{\tau_\epsilon \le \bigT\}\bigg].
\end{split}
\end{align}
Thus, we obtain 
\begin{align*}
\Pr^{\kappa}\big[ \tau_\epsilon \le \bigT\big]
= \; &  \bE_\bigT \bigg[
\exp \bigg( \frac{1}{\kappa} \Phi^{\kappa}_\bigT (\bDyson) \bigg) \; \one \{\tau_\epsilon \le \bigT\} \bigg]
\\ 
= \; &  \bE_\bigT \bigg[
\exp \bigg( \frac{1}{\kappa} \Phi^{\kappa}_{\tau_\epsilon} (\bDyson) \bigg) \; \one \{\tau_\epsilon \le \bigT\} \bigg] 
&& \textnormal{[by OST,~\eqref{eqn:OST}]}
\\
\leq \;& \bE_\bigT \bigg[
\exp \bigg( 
\frac{\ONE}{\kappa} \log \frac{\partfn(\bDyson_{\tau_\epsilon})}{\partfn(\btheta_0)} 
\, + \,\frac{ \constaddnew}{\two \constmult} \, \tau_\epsilon 
\bigg) \; \one \{\tau_\epsilon \le \bigT\} \bigg] 
&& \textnormal{[by~\eqref{eq:differential_inequalities}; see below]}
\\
\leq \;& \exp\Big(\frac{ \constaddnew}{\two \constmult} \, \bigT \Big) \, \partfn(\btheta_0)^{-\ONE/\kappa} \, 
\Big( \max_{\btheta \in \partial \chamber^\epsilon} \partfn(\btheta)\Big)^{\ONE/\kappa} ,
\end{align*}
where we used the upper bound in~\eqref{eq:differential_inequalities} to control 
\begin{align}\label{eqn:control_constants}
\begin{split}
\; & - \; \frac{1}{2}  \sum_{j=1}^n\int_0^{\tau_\epsilon} \bigg( 
( \partial_j \newphi^j)(\btheta_s)
+ \frac{1}{\kappa} \, \big( \newphi^j(\btheta_s) \big)^2 \bigg) \ud s
\\
\leq \; & \tau_\epsilon\Big( \frac{\constaddnew}{2 \constmult} + \big(\frac{1}{2\constmult} - \frac{1}{2\kappa}\big)\sum_{j=1}^n \big( \newphi^j(\btheta_s) \big)^2 \Big)
= \frac{  \constaddnew}{\two\constmult} \, \tau_\epsilon , \qquad \kappa \leq \constmult. 
\end{split}
\end{align}
Hence, the desired bound~\eqref{eqn:bad event bound} holds with 
$C(n, \bigT, \partfn) = \exp\big(\frac{\constaddnew}{\two \constmult} \, \bigT \big)$ and 
\begin{align*}
\upbd(\epsilon, \btheta_0, \partfn) 
= - \log \left(\frac{\underset{\btheta \in \partial \chamber^\epsilon}{\max} \, \partfn(\btheta)}{\partfn(\btheta_0)}\right)  
\qquad \xrightarrow[]{\epsilon \searrow 0} 
\qquad + \infty ,
\end{align*}
thanks to the first limit in~\eqref{eqn:growth_conditions}. This concludes the proof. 
\end{proof}

We next express the Dyson-Dirichlet energy $\nBessel_\bigT$ of Definition~\ref{def:multiradial_Dirichlet_energy}  
in terms of the functional $\Phi^{0}_\bigT$ of \Cref{def:Phi_kappa} 
and the sum of independent Dirichlet energies~\eqref{eqn:nDirichlet_energy_def} appearing in Schilder's theorem, denoted $\nDenergy_{\bigT}$. 

\begin{lem} \label{lem: J as sum of independent and interaction terms}
Fix $\bigT \in (0,\infty)$ and $\btheta_0 \in \chamber$. 
For any $\btheta \in \BesselSpace{\bigT]}{\btheta_0}$, we have
\begin{align} \label{eq: J sum of independent and interaction}
\nBessel_\bigT(\btheta) 
= \nDenergy_{\bigT}(\btheta) - \Phi^{0}_\bigT(\btheta) ,
\qquad \textnormal{where} \qquad 
\nDenergy_{\bigT}(\btheta) 
:= \sum_{j=1}^n \nDenergy_{\bigT}(\theta^j) .
\end{align}
\end{lem}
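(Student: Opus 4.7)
The strategy is to expand the square in the definition of $\nBesselA_\bigT$ and identify the resulting cross term as a boundary term for $G_\A$ via the chain rule; the remaining pieces then match $\nDenergy_\bigT$ and $-\Phi^{0,\A}_\bigT$ by inspection.

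First, I would dispose of the case when $\btheta$ is not absolutely continuous on $[0,\bigT]$. Here $\nBesselA_\bigT(\btheta)=\infty$ by definition, and at least one coordinate $\theta^j$ fails to be AC, so $\nDenergy_\bigT(\btheta) = \sum_j \nDenergy_\bigT(\theta^j) = \infty$ as well. Because $\btheta\in C_{\btheta_0}([0,\bigT],\chamber)$ is continuous into $\chamber$ and $[0,\bigT]$ is compact, the image of $\btheta$ lies in a compact subset of $\chamber$ on which $G_\A$ and each $\phi_\A^j$ are smooth and bounded; in particular $\Phi^{0,\A}_\bigT(\btheta)$ is a finite real number. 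Hence both sides of~\eqref{eq: J sum of independent and interaction} equal $+\infty$ in this case.

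Now assume $\btheta$ is absolutely continuous. Expanding $(a-2b)^2 = a^2 - 4ab + 4b^2$ pointwise and summing over $j$, I obtain
\begin{align*}
\nBesselA_\bigT(\btheta) \;=\; \nDenergy_\bigT(\btheta) \;-\; 2\int_0^\bigT \sum_{j=1}^n \dot\theta^j_s\,\phi_\A^j(\btheta_s)\,\ud s \;+\; \frac{1}{2}\int_0^\bigT \sum_{j=1}^n \big(2\phi_\A^j(\btheta_s)\big)^2 \ud s,
\end{align*}
where each integral converges since $\phi_\A^j$ is bounded along the image of $\btheta$. The key identity is the computation $\partial_{\theta^j}\log F(\btheta) = \tfrac{1}{2}\phi^j(\btheta)$, which is recorded in~\eqref{eqn:F partials} (it appeared there with a $\sqrt{\kappa}$ factor arising from the chain rule applied to $\bDyson_t = \btheta_0+\sqrt{\kappa}\bB_t$; the purely spatial derivative has no such factor). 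Combined with the definition $\phi_\A^j = \tfrac{\A}{4}\phi^j$, this yields
\begin{align*}
\partial_{\theta^j} G_\A(\btheta) \;=\; -\A\,\partial_{\theta^j}\log F(\btheta) \;=\; -\frac{\A}{2}\phi^j(\btheta) \;=\; -2\phi_\A^j(\btheta).
\end{align*}

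Since $G_\A$ is $C^1$ on a neighborhood of the compact image of $\btheta$, the composition $G_\A\circ\btheta$ is AC, and the usual chain rule for AC functions gives
\begin{align*}
G_\A(\btheta_\bigT) - G_\A(\btheta_0) \;=\; \int_0^\bigT \sum_{j=1}^n \dot\theta^j_s\,\partial_{\theta^j}G_\A(\btheta_s)\,\ud s \;=\; -2\int_0^\bigT \sum_{j=1}^n \dot\theta^j_s\,\phi_\A^j(\btheta_s)\,\ud s.
\end{align*}
Substituting this into the expansion above and comparing with~\eqref{eqn:Phi_kappa} at $\kappa=0$ immediately yields $\nBesselA_\bigT(\btheta) = \nDenergy_\bigT(\btheta) - \Phi^{0,\A}_\bigT(\btheta)$, as claimed. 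The only mildly delicate point is ensuring the chain rule applies and the cross-term integral is absolutely convergent; both follow at once from the observation that $\btheta([0,\bigT])$ is compactly contained in $\chamber$, so $G_\A$ and $\phi_\A^j$ stay bounded along the path.
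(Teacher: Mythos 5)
Your proof is correct and takes essentially the same route as the paper's: both expand the square in $\nBesselA_\bigT$ and identify the cross term as the boundary increment of $G_\A$ via the spatial relation $\partial_{\theta^j}\log F(\btheta) = \tfrac{1}{2}\phi^j(\btheta)$, the only cosmetic difference being that the paper writes this directly as an integral formula for $\log\big(F(\btheta_\bigT)/F(\btheta_0)\big)$ while you phrase it as the chain rule for $G_\A\circ\btheta$ along the absolutely continuous path. Your parenthetical remark disentangling the $\sqrt{\kappa}$ factor in~\eqref{eqn:F partials} from the purely spatial derivative is also correct.
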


\begin{proof}
On the one hand, if $\btheta$ is not absolutely continuous,  
then $\nBessel_\bigT(\btheta)=\infty$, 
since the last term on the righthand side of~\eqref{eqn:Phi_kappa} with $\kappa=0$ is negative.
On the other hand, if $\btheta$ is 
absolutely continuous, then using the identity $\newphi^j = \partial_j \log \partfn$, we have 
\begin{align*}
\DPotential(\btheta_0) - \DPotential(\btheta_\bigT)
= \log \frac{\partfn(\btheta_\bigT)}{\partfn(\btheta_0)}
\; = \; \int_0^\bigT \sum_{j=1}^n \big( \tfrac{\ud}{\ud s} \theta^j_s \big) \, 
\newphi^j(\btheta_s)  \ud s ,
\end{align*}
which together with \Cref{def:Phi_kappa} implies that
\begin{align*}
\nDenergy_{\bigT}(\btheta) - \Phi^{0}_\bigT (\btheta)
= \; &  \frac{1}{2} \, \int_0^\bigT \sum_{j=1}^n \big| \tfrac{\ud}{\ud s} \theta^j_s \big|^2 \ud s
\; - \; \frac{1}{2} \int_0^\bigT  \, 
\sum_{j=1}^n \Big( 2 \newphi^j(\btheta_s) \big( \tfrac{\ud}{\ud s} \theta^j_s \big)
- \big(  \newphi^j(\btheta_s) \big)^2 \Big) \ud s  \\
= \; & \frac{1}{2} \int_0^\bigT \sum_{j=1}^n \big| \tfrac{\ud}{\ud s} \theta^j_s -  \newphi^j(\btheta_s) \big|^2 \ud s 
\; = \; 
\nBessel_\bigT(\btheta) .
\end{align*}
\end{proof}

As a corollary, we may characterize finite-energy drivers in finite time as non-colliding drivers having finite Dirichlet energy. 

\begin{cor}\label{cor:finite-energy-characterization}
We have $\nBessel_\bigT(\btheta) < \infty$ 
if and only if $\nDenergy_{\bigT}(\theta^j) < \infty$ for all $j \in \{1,\ldots,n\}$~and 
\begin{align} \label{eqn: deterministic collision time}
\detblowuptime = \detblowuptime(\btheta) := \; & \inf \Big\{t \geq 0 \; \colon \; \min_{1 \leq j < k \leq n} \, \big| e^{\ii \theta^j_t} - e^{\ii \theta^k_t} \big| = 0 \Big\} 
\quad  > \quad \bigT .
\end{align}
\end{cor}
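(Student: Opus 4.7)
The plan is to leverage the identity from Lemma~\ref{lem: J as sum of independent and interaction terms},
\begin{align*}
\nBesselA_\bigT(\btheta) = \nDenergy_\bigT(\btheta) - \Phi^{0,\A}_\bigT(\btheta) ,
\end{align*}
which reduces the problem to controlling $\Phi^{0,\A}_\bigT$ in both directions. The key observation is that $\Phi^{0,\A}_\bigT$ is sandwiched: it is bounded above by $\GA(\btheta_0)$ (Equation~\eqref{eqn: Phi0_upper_bound}), while from its definition~\eqref{eqn:Phi_kappa} with $\kappa=0$,
\begin{align*}
\Phi^{0,\A}_\bigT(\btheta) = \GA(\btheta_0) - \GA(\btheta_\bigT) - 2 \int_0^\bigT \sum_{j=1}^n \big( \phiA^j(\btheta_s) \big)^2 \ud s ,
\end{align*}
so $\Phi^{0,\A}_\bigT > -\infty$ iff $\GA(\btheta_\bigT)<\infty$ and the interaction integral converges.

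\textbf{Forward direction.} Assume $\nBesselA_\bigT(\btheta) < \infty$. By Definition~\ref{def:multiradial_Dirichlet_energy}, $\btheta$ is then absolutely continuous on $[0,\bigT]$. Since $\btheta_0 \in \chamber$ gives $F(\btheta_0) > 0$ and hence $\GA(\btheta_0) < \infty$, the upper bound~\eqref{eqn: Phi0_upper_bound} combined with the identity yields
\begin{align*}
\sum_{j=1}^n \nDenergy_\bigT(\theta^j) = \nDenergy_\bigT(\btheta) = \nBesselA_\bigT(\btheta) + \Phi^{0,\A}_\bigT(\btheta) \leq \nBesselA_\bigT(\btheta) + \GA(\btheta_0) < \infty ,
\end{align*}
so each $\nDenergy_\bigT(\theta^j)$ is finite. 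The condition $\detblowuptime(\btheta) > \bigT$ is immediate from $\btheta \in \BesselSpace{\bigT]}{\btheta_0}$: the image $\btheta([0,\bigT])$ is a compact subset of the open set $\chamber$, so $\min_{s \in [0,\bigT]} \min_{j<k} |e^{\ii \theta^j_s} - e^{\ii \theta^k_s}| > 0$.

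\textbf{Reverse direction.} Assume each $\nDenergy_\bigT(\theta^j) < \infty$ and $\detblowuptime(\btheta) > \bigT$. Each $\theta^j$ is then absolutely continuous on $[0,\bigT]$, and by continuity of $\btheta$ on the compact interval combined with $\detblowuptime(\btheta) > \bigT$, there is a constant $c>0$ with
\begin{align*}
\min_{s \in [0,\bigT]} \min_{1 \leq j < k \leq n} \big| e^{\ii \theta^j_s} - e^{\ii \theta^k_s} \big| \geq c .
\end{align*}
In particular, $F(\btheta_\bigT) \geq c'> 0$, so $\GA(\btheta_\bigT) < \infty$, and the functions $\phiA^j(\btheta_s)$ are uniformly bounded on $[0,\bigT]$ (as $\cot(x/2)$ is bounded away from poles), giving
\begin{align*}
\int_0^\bigT \sum_{j=1}^n \big( \phiA^j(\btheta_s) \big)^2 \ud s < \infty .
\end{align*}
Thus $\Phi^{0,\A}_\bigT(\btheta) > -\infty$, and the identity from Lemma~\ref{lem: J as sum of independent and interaction terms} gives $\nBesselA_\bigT(\btheta) = \nDenergy_\bigT(\btheta) - \Phi^{0,\A}_\bigT(\btheta) < \infty$, as desired.

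No step is really an obstacle: once Lemma~\ref{lem: J as sum of independent and interaction terms} and the upper bound~\eqref{eqn: Phi0_upper_bound} are in hand, the corollary reduces to noting that $\Phi^{0,\A}_\bigT$ is finite precisely when the path stays in the open chamber, a compactness argument. The only subtlety worth flagging is that the three ingredients --- absolute continuity, $H^1$-regularity of each coordinate, and strict non-collision up to time $\bigT$ --- play logically distinct roles: absolute continuity comes from the definition, $H^1$-regularity from the upper bound on $\Phi^{0,\A}_\bigT$, and non-collision is needed to prevent the interaction integral from diverging.
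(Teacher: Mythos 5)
Your proof is correct and uses the same two ingredients as the paper: the identity $\nBesselA_\bigT = \nDenergy_\bigT - \Phi^{0,\A}_\bigT$ from Lemma~\ref{lem: J as sum of independent and interaction terms} and the upper bound~\eqref{eqn: Phi0_upper_bound}. Your reverse direction is identical to the paper's, just with the compactness reasoning made more explicit. The one genuine divergence is in the forward direction: you dispatch $\detblowuptime(\btheta) > \bigT$ by observing that $\btheta([0,\bigT])$ is a compact subset of the open chamber $\chamber$, so no collision can occur on $[0,\bigT]$ --- a reading that is literally valid for the domain $\BesselSpace{\bigT]}{\btheta_0}$. The paper instead proves the contrapositive: if $\detblowuptime \leq \bigT$, then $\GA(\btheta_t) \to \infty$ as $t \to \detblowuptime-$, and since $\nBesselA_t(\btheta) = \nDenergy_t(\btheta) - \Phi^{0,\A}_t(\btheta) \geq \GA(\btheta_t) - \GA(\btheta_0)$, the truncated energy blows up, forcing $\nBesselA_\bigT(\btheta) = \infty$. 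Your shortcut makes the $\detblowuptime > \bigT$ clause of the corollary look vacuous on the stated domain, which is defensible; but the paper's argument actually exhibits the mechanism by which approaching $\partial\chamber$ costs infinite energy, and that computation is the prototype for the swallowing-time estimate carried out in Proposition~\ref{prop:hulls_avoid_boundary_and_each_other}. So while there is no gap in what you wrote, it is worth recognizing the blow-up argument as the substantive content the authors want to record here.
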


\begin{proof}
If $\detblowuptime > \bigT$, then we see from \Cref{def:Phi_kappa} that $\Phi^{0}_\bigT(\btheta) > -\infty$. 
If furthermore $\nDenergy_{\bigT}(\theta^j) < \infty$ for all $j$, then we see from~\eqref{eq: J sum of independent and interaction} from \Cref{lem: J as sum of independent and interaction terms} that $\nBessel_\bigT(\btheta) < \infty$. 
This proves the converse implication.
To prove (the contraposition of) the forward implication, note that $\Phi^{0}_\bigT$ is bounded from above by~\eqref{eqn: Phi0_upper_bound}, 
so $\nDenergy_{\bigT}(\theta^j) = \infty$ readily implies that $\nBessel_\bigT(\btheta) = \infty$, 
while if $\detblowuptime \leq \bigT$, then monotonicity of the energy in time gives
\begin{align*}
\nBessel_\bigT(\btheta) \geq \nBessel_{\detblowuptime}(\btheta) 
= \; & \limsup_{t \to \detblowuptime-} \nBessel_t(\btheta) \\
= \; & \limsup_{t\to\detblowuptime-} 
\big( \nDenergy_t(\btheta) - \Phi^{0}_t (\btheta) \big) 
&& \textnormal{[by~\eqref{eq: J sum of independent and interaction} from~\Cref{lem: J as sum of independent and interaction terms}]}
\\
\geq \; & \limsup_{t\to\detblowuptime-} 
\big( \DPotential(\btheta_t) - \DPotential(\btheta_0) \big) 
&& \textnormal{[by~\eqref{eqn:Phi_kappa}, as $\nDenergy_t(\btheta) \ge 0$]} \\
= \; & \infty .
&& \textnormal{{[by~\eqref{eqn:growth_conditions}, as $\DPotential = - \log \partfn$]}}
\end{align*}
This concludes the proof.
\end{proof}

We also have a similar (unidirectional) result for infinite time, to be used in \Cref{sec:zero-energy}.

\begin{cor}\label{cor:infinite-energy-characterization}
If $\nBessel(\btheta) := \underset{\bigT \to \infty}{\lim} \, \nBessel_\bigT(\btheta)<\infty$, then $\nDenergy(\btheta) := \underset{\bigT \to \infty}{\lim} \, \nDenergy_\bigT(\btheta)<\infty$.
\end{cor}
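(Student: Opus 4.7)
The plan is to deduce this immediately from the identity in \Cref{lem: J as sum of independent and interaction terms} combined with the uniform upper bound~\eqref{eqn: Phi0_upper_bound} on $\Phi^{0,\A}_\bigT$. Specifically, for each finite $\bigT$ the identity~\eqref{eq: J sum of independent and interaction} gives
\begin{align*}
\nDenergy_{\bigT}(\btheta)
\; = \; \nBesselA_\bigT(\btheta) + \Phi^{0,\A}_\bigT(\btheta)
\; \leq \; \nBesselA_\bigT(\btheta) + \GA(\btheta_0),
\end{align*}
where the inequality uses~\eqref{eqn: Phi0_upper_bound}. Since both $\nBesselA_\bigT(\btheta)$ and $\nDenergy_{\bigT}(\btheta)$ are monotonically non-decreasing in $\bigT$ (as integrals of non-negative functions over $[0,\bigT]$), both limits $\nBesselA(\btheta)$ and $\nDenergy(\btheta)$ exist in $[0, \infty]$.

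To conclude, I take $\bigT \to \infty$ on both sides. The righthand side converges to $\nBesselA(\btheta) + \GA(\btheta_0)$, which is finite by hypothesis (noting that $\GA(\btheta_0) < \infty$ because $\btheta_0 \in \chamber$ so the coordinates are distinct modulo $2\pi$, hence $F(\btheta_0) > 0$). The lefthand side converges to $\nDenergy(\btheta)$ by monotone convergence, yielding
\begin{align*}
\nDenergy(\btheta) \; \leq \; \nBesselA(\btheta) + \GA(\btheta_0) \; < \; \infty.
\end{align*}

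There is no real obstacle here beyond verifying that we may apply~\eqref{eq: J sum of independent and interaction} at each finite $\bigT$: this requires $\btheta$ to be absolutely continuous on $[0,\bigT]$, but this is automatic since $\nBesselA(\btheta) < \infty$ implies $\nBesselA_\bigT(\btheta) < \infty$ for every $\bigT$, and by \Cref{cor:finite-energy-characterization} this forces absolute continuity (and in particular $\detblowuptime > \bigT$ for every $\bigT$, so $\GA(\btheta_\bigT)$ is finite and the expression for $\Phi^{0,\A}_\bigT$ is well-defined).
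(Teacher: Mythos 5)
Your proof is correct and takes essentially the same approach as the paper: combine the identity $\nDenergy_\bigT(\btheta) = \nBesselA_\bigT(\btheta) + \Phi^{0,\A}_\bigT(\btheta)$ from Lemma~\ref{lem: J as sum of independent and interaction terms} with the upper bound $\Phi^{0,\A}_\bigT(\btheta) \leq \GA(\btheta_0)$ from~\eqref{eqn: Phi0_upper_bound}, then let $\bigT \to \infty$. Your version is slightly more careful than the paper's terse argument in spelling out the monotonicity of both energies in $\bigT$ and the finiteness of $\GA(\btheta_0)$, which is a welcome clarification rather than a deviation.
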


\begin{proof} 
Using Lemma~\ref{lem: J as sum of independent and interaction terms} and the definition~\eqref{eqn:Phi_kappa} of $\Phi^{0}_\bigT$, 
we see that
\begin{align*}
\nDenergy (\btheta) 
= \; &  \lim_{T\to \infty } \big(\nBessel_T (\btheta) + \Phi^{0}_T(\btheta) \big) 
&& \textnormal{[by~\eqref{eq: J sum of independent and interaction} from~\Cref{lem: J as sum of independent and interaction terms}]}
\\
\leq \; & \lim_{T\to \infty } \nBessel_T (\btheta) +  \DPotential(\btheta_0) 
\, = \, \nBessel (\btheta) +  \DPotential(\btheta_0) 
\, < \, \infty .
&& \textnormal{[by~\eqref{eqn: Phi0_upper_bound}, as $\nBessel_T (\btheta) \ge 0$ ]}
\end{align*}
\end{proof}

\begin{remark}\label{rem:finite-energy-characterization_spiral}
The above proofs of~\Cref{cor:finite-energy-characterization}~\&~\Cref{cor:infinite-energy-characterization} 
require that $\DPotential$ is bounded from below; in fact, without loss of generality, Dyson-type potentials are assumed to be non-negative (\Cref{def:Dyson_type_potential}), 
which corresponds to $\partfn$ taking values in $(0,1]$. (Note that this does not hold if we include a nonzero spiraling rate as in \Cref{cor:Bessel_LDP_spiral}.)
Let us note, however, that \Cref{cor:finite-energy-characterization} also holds more generally --- for example, it follows from \Cref{rem:spiral_SLE} that it does hold for multiradial Dirichlet energy with spiral. 
\end{remark}

\subsection{LDP for Dyson-type diffusions on the circle}
\label{subsec: proof of Bessel LDP}

Recall (e.g., from~\cite{Dembo-Zeitouni:Large_deviations_techniques_and_applications}) that for a topological space $\spaceX$, a~\emph{rate function} $\lenergy$ is a lower semicontinuous mapping $\lenergy \colon \spaceX \to [0, +\infty]$ (i.e,~for all $c \in [0, \infty)$, the level set $\lenergy^{-1}[0,c]$ is a closed subset of $\space X$). 
We note that in Theorems \ref{thm:Bessel_LDP} and \ref{thm:radial_LDP_finite_time}, the space $\spaceX$ is a metric space ($\BesselSpace{\bigT]}{\btheta_0}$, or $\cC$, respectively), so it is sufficient to check the lower semicontinuity property on sequences. 
Recall also that a~\emph{good rate function} is a rate function for which all level sets are compact subsets of $\space X$, which implies lower semicontinuity.

\begin{lem}\label{lemma:good_rate_function}
The Dyson-Dirichlet energy $\nBessel_\bigT$ in Definition~\ref{def:multiradial_Dirichlet_energy}  is a good rate function.
\end{lem}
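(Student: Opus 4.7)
The plan is to combine the decomposition from Lemma~\ref{lem: J as sum of independent and interaction terms},
\[
\nBesselA_\bigT(\btheta) \; = \; \nDenergy_\bigT(\btheta) - \Phi^{0,\A}_\bigT(\btheta) ,
\]
with the upper bound $\Phi^{0,\A}_\bigT(\btheta) \leq \GA(\btheta_0)$ from Lemma~\ref{lem: limit of Phi kappa} and the good-rate-function property of the $n$-dimensional Dirichlet energy in Schilder's theorem (Theorem~\ref{thm:Schilder}). The first two facts give, pointwise,
\[
\nDenergy_\bigT(\btheta) \; \leq \; \nBesselA_\bigT(\btheta) + \GA(\btheta_0) ,
\]
so every sublevel set $\{\btheta \in \BesselSpace{\bigT]}{\btheta_0} : \nBesselA_\bigT(\btheta) \leq c\}$ sits inside the Schilder sublevel set $\{\nDenergy_\bigT \leq c + \GA(\btheta_0)\}$, which (after translating by $\btheta_0$) is relatively compact in the space of continuous paths started at $\btheta_0$.

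The main obstacle is that the sup-metric limit of non-colliding paths could a priori touch $\partial \chamber$, in which case $\Phi^{0,\A}_\bigT$ loses continuity and $\nBesselA_\bigT$ can jump. To rule this out, I would apply the decomposition up to an arbitrary intermediate time $t \leq \bigT$. In the expression~\eqref{eqn:Phi_kappa} with $\kappa = 0$ the quadratic-in-$\phiA^j$ integral is non-positive, so together with $\nDenergy_t(\btheta) \geq 0$ one obtains
\[
\GA(\btheta_t) \; \leq \; \GA(\btheta_0) + \nBesselA_t(\btheta) \; \leq \; \GA(\btheta_0) + c , \qquad t \in [0,\bigT] ,
\]
uniformly across the sublevel set. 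Since $\GA(\btheta) \to +\infty$ whenever two coordinates of $\btheta$ collide, this yields a uniform $\epsilon = \epsilon(c, \btheta_0) > 0$ with $\min_{1 \leq j < k \leq n} |\theta^j_t - \theta^k_t| \geq \epsilon$ for every $\btheta$ in the sublevel set and every $t \in [0,\bigT]$.

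Finally, I would verify lower semicontinuity, which together with the above will give that the sublevel set is a closed subset of a compact set, hence itself compact. Take $\btheta^{(k)} \to \btheta$ in the metric~\eqref{eqn:sup-metric} with $\nBesselA_\bigT(\btheta^{(k)}) \leq c$. The uniform separation passes to the limit, so $\btheta \in \BesselSpace{\bigT]}{\btheta_0}$ with coordinates at distance $\geq \epsilon$ throughout $[0,\bigT]$. Continuity of $\Phi^{0,\A}_\bigT$ on this non-colliding regime (Lemma~\ref{lem: limit of Phi kappa}) gives $\Phi^{0,\A}_\bigT(\btheta^{(k)}) \to \Phi^{0,\A}_\bigT(\btheta)$, and the good-rate-function property of $\nDenergy_\bigT$ in Theorem~\ref{thm:Schilder} gives $\nDenergy_\bigT(\btheta) \leq \liminf_k \nDenergy_\bigT(\btheta^{(k)})$. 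Subtracting,
\[
\nBesselA_\bigT(\btheta) \; = \; \nDenergy_\bigT(\btheta) - \Phi^{0,\A}_\bigT(\btheta) \; \leq \; \liminf_k \big( \nDenergy_\bigT(\btheta^{(k)}) - \Phi^{0,\A}_\bigT(\btheta^{(k)}) \big) \; \leq \; c ,
\]
so the sublevel set is closed, and therefore compact, establishing that $\nBesselA_\bigT$ is a good rate function. The crux is the favourable sign of the quadratic term in~\eqref{eqn:Phi_kappa}, which gives the a priori distance-from-collision estimate needed to make the Schilder compactness argument go through in the presence of a singular drift.
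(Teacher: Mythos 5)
Your proof is correct and follows the same overall route as the paper: the decomposition $\nBesselA_\bigT = \nDenergy_\bigT - \Phi^{0,\A}_\bigT$ from Lemma~\ref{lem: J as sum of independent and interaction terms}, the upper bound $\Phi^{0,\A}_\bigT \leq \GA(\btheta_0)$, Schilder compactness of the $\nDenergy_\bigT$-level sets, and lower semicontinuity via continuity of $\Phi^{0,\A}_\bigT$ plus lower semicontinuity of $\nDenergy_\bigT$. The one genuine difference is that you explicitly derive the uniform estimate $\GA(\btheta_t) \leq \GA(\btheta_0) + c$ for all $t \in [0,\bigT]$ and all $\btheta$ in the sublevel set, which forces a uniform $\epsilon$-separation from $\partial\chamber$ and hence guarantees that the Schilder-limit $\btheta$ actually lies in $\BesselSpace{\bigT]}{\btheta_0}$ before the continuity of $\Phi^{0,\A}_\bigT$ (stated on that space in Lemma~\ref{lem: limit of Phi kappa}) is invoked; the paper's proof passes directly to the limit in $(\nDenergy_\bigT)^{-1}[0, c + \GA(\btheta_0)]$ and appeals to continuity of $\Phi^{0,\A}_\bigT$ without verifying that the limit avoids collisions, so your intermediate estimate makes explicit a point that the paper leaves implicit. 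This is a welcome tightening rather than a different method, and the favourable sign of the $\sum_j (\phiA^j)^2$ term that you single out is precisely what makes it go through.
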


\begin{proof}
First, observe that the Dirichlet energy of $\btheta \in C([0, T], \chamber)$ can be bounded from above in terms of the Dyson-Dirichlet energy as
\begin{align*}
\nDenergy_{\bigT}(\btheta) 
\leq \; & \nDenergy_{\bigT}(\btheta) - \Phi^{0}_\bigT(\btheta) + \DPotential(\btheta_0) 
&& \textnormal{[by~\eqref{eqn: Phi0_upper_bound}]}
\\
= \; & \nBessel^0_\bigT(\btheta) + \DPotential(\btheta_0) .
&& \textnormal{[by~\eqref{eq: J sum of independent and interaction} from~\Cref{lem: J as sum of independent and interaction terms}]}
\end{align*}
Next, fix $c \ge 0$, and let $(\btheta_{(k)})_{k \in \bN}$ be a sequence in
$(\nBessel_\bigT)^{-1}[0,c]  \subset (\nDenergy_{\bigT})^{-1}[c+ \DPotential(\btheta_0)]$. 
Since $\nDenergy_{\bigT}$ is a good rate function (by Schilder's theorem), 
we can pass to a subsequence, also denoted by $(\btheta_{(k)})_{k \in \bN}$, 
which converges to some element
\begin{align*}
\btheta \in (\nDenergy_{\bigT})^{-1} \big[ 0, c + \DPotential(\btheta_0) \big] .
\end{align*}
By continuity of $\Phi^{0}_\bigT$ from \Cref{lem: limit of Phi kappa} 
and lower semicontinuity of $\nDenergy_{\bigT}$ (by Schilder's theorem),
we obtain using Lemma~\ref{lem: J as sum of independent and interaction terms} the estimate
\begin{align*}
\nBessel_\bigT(\btheta) 
\; = \; \nDenergy_{\bigT}(\btheta) - \Phi^{0}_\bigT(\btheta) 
\; \le \; \liminf_{k \to \infty} \big( \nDenergy_{\bigT}(\btheta_{(k)}) - \Phi^{0}_\bigT(\btheta_{(k)}) \big) 
\; = \; \liminf_{k \to \infty} \nBessel_\bigT(\btheta_{(k)}) 
\; \le \; c,
\end{align*}
yielding  
$\btheta \in (\nBessel_\bigT)^{-1}[0,c]$. 
This shows that $(\nBessel_\bigT)^{-1}[0,c]$ is compact, so $\nBessel_\bigT$ is good. 
\end{proof}

We now conclude with the proof of the first main result of the present work. 

\BesselLDPgeneral*

\begin{proof}
We already know that $\nBessel_\bigT$ is a good rate function by Lemma~\ref{lemma:good_rate_function}. 
Hence, it remains to show that for any closed subset $\closed$ and open subset $\open$ of $\BesselSpace{\bigT]}{\btheta_0}$, we have
\begin{align} 
\label{eq: limsup claim}
\limsup_{\kappa \to 0+} \kappa \log \PrDyson \big[ \bDyson \in \closed \big] 
\; & \leq - \inf_{\btheta \in \closed} \nBessel_\bigT(\btheta) ,
\\
\label{eq: liminf claim}
\liminf_{\kappa \to 0+} \kappa \log \PrDyson \big[ \bDyson \in \open \big] 
\; & \geq - \inf_{\btheta \in \open} \nBessel_\bigT(\btheta) .
\end{align}
We will use Schilder's theorem (Theorem~\ref{thm:Schilder}) combined with Varadhan's lemma (Lemma~\ref{lemma:Varadhan}), 
to prove the upper~\&~lower bounds~(\ref{eq: limsup claim},~\ref{eq: liminf claim}).
The former is the harder one. 

Using the Radon-Nikodym derivative~\eqref{eq: desired RN},
\begin{align} \label{eqn:Radon-Nikodym} 
\frac{\ud \Pr^{\kappa}_\bigT}{ \ud \bP_\bigT} = \frac{\mgle_\bigT}{\mgle_0} 
= \exp \bigg( \frac{1}{\kappa} \Phi^{\kappa}_\bigT (\bDyson) \bigg) ,
\end{align} 
for any Borel set $\Borel \subseteq \BesselSpace{\bigT]}{\btheta_0}$, we have
\begin{align} \label{eq: RN_terms}
\kappa \log \Pr^{\kappa}_\bigT \Big[ \bDyson_{[0,\bigT]} \in \Borel \Big] 
= \; & \kappa \log \bE_\bigT
\bigg[ \exp \bigg( \frac{1}{\kappa} \Phi^{\kappa}_\bigT(\bDyson) \bigg) 
\one \{\bDyson_{[0,\bigT]} \in \Borel\} \bigg] .
\end{align}

\noindent {\bf Lower bound.} 
Fix an open set $\open \subset \BesselSpace{\bigT]}{\btheta_0}$. 
Without loss of generality, we may assume that 
\begin{align*}
M_\open := \inf_{\btheta \in \open} (\nDenergy_{\bigT}(\btheta) - \Phi^{0}_\bigT(\btheta) )  < \infty .
\end{align*} 
Fix $\varepsilon > 0$ and $\btheta^{(\varepsilon)} \in \open$ such that $\nDenergy_{\bigT}(\btheta^{(\varepsilon)}) - \Phi^{0}_\bigT (\btheta^{(\varepsilon)}) \le M_\open + \varepsilon$, which in particular implies that $\Phi^{0}_\bigT(\btheta^{(\varepsilon)}) > -\infty$. 
As $\Phi^{0}_\bigT$ is continuous by \Cref{lem: limit of Phi kappa}, we can pick an open neighborhood $\open^{(\varepsilon)} \subset \open$ of $\btheta^{(\varepsilon)} \in \BesselSpace{\bigT]}{\btheta_0}$
such that $\Phi^{0}_\bigT \ge \Phi^{0}_\bigT(\btheta^{(\varepsilon)}) - 1$ on $\open^{(\varepsilon)}$.

Now, let $\Phi$ be the lower semicontinuous function equaling $\Phi^{0}_\bigT$ on $\open^{(\varepsilon)}$ and $\Phi^{0}_\bigT(\btheta^{(\varepsilon)}) - 2$ otherwise. 
Since  $\Phi^{\kappa}_\bigT \ge \Phi^{0}_\bigT - \kappa  \, \frac{\constaddnew T}{2\constmult}$ by \Cref{lem: limit of Phi kappa}, 
by applying Item~\ref{item2_Var} of Varadhan's lemma (Lemma~\ref{lemma:Varadhan}) 
to the set $\open^{(\varepsilon)}$ and the function $\Phi$, 
combined with Schilder's theorem (Theorem~\ref{thm:Schilder}), we obtain
\begin{align*}
& \liminf_{\kappa \to 0+} \kappa \log \bE_\bigT
\bigg[ \exp \bigg( \frac{1}{\kappa} \Phi^{\kappa}_\bigT(\bDyson) \bigg)\one{\{\bDyson_{[0,\bigT]} \in \open\}} \bigg] \\
\ge \; & 
\liminf_{\kappa \to 0+} \kappa \log \bE_\bigT
\bigg[ \exp \bigg( \frac{1}{\kappa} \Phi^{0}_\bigT(\bDyson) - \frac{\constaddnew T}{2\constmult} \bigg)\one{\{\bDyson_{[0,\bigT]} \in \open^{(\varepsilon)} \}} \bigg] \\
\ge \; & 
\liminf_{\kappa \to 0+} \kappa \bigg(-\frac{\constaddnew T}{2\constmult} \;+ \; \log \bE_\bigT
\bigg[ \exp \bigg( \frac{1}{\kappa} \Phi^{0}_\bigT(\bDyson)  \bigg)\one{\{\bDyson_{[0,\bigT]} \in \open^{(\varepsilon)} \}} \bigg] \bigg) \\
\geq \; & 
- \inf_{\btheta \in \open^{(\varepsilon)}} (\nDenergy_{\bigT}(\btheta)
- \Phi (\btheta) ) 
\quad \geq \quad 
- (\nDenergy_{\bigT}(\btheta^{(\varepsilon)}) - \Phi^{0}_\bigT (\btheta^{(\varepsilon)}) ) 
\\
\ge \; &  - M_\open - \varepsilon \qquad \xrightarrow[]{\varepsilon \searrow 0} 
\qquad - M_\open.
\end{align*}

\noindent {\bf Upper bound.} 
Fix a closed set $\closed \subset \BesselSpace{\bigT]}{\btheta_0}$. By \Cref{lem: limit of Phi kappa}, we have
\begin{align*}
\Phi^{\kappa}_\bigT(\btheta) 
\; \leq \; \Phi^{0}_\bigT(\btheta) + \kappa \Psi(\closed),
\qquad \textnormal{for all }  \btheta \in \closed,
\end{align*} 
and the lower bound in~\eqref{eq:differential_inequalities} implies that
\begin{align} \label{eq:Psi_bdd}
\Psi(\closed) := \sup_{\btheta \in \closed} \bigg[\frac{\constaddnew T}{2 \constmult } \; - \;\sum_{j=1}^n \frac{1}{2} \int_0^\bigT \Big((\partial_j \newphi^j)(\btheta_s)  \,  \Big)\ud s  \bigg]  \;  \ge 0 .
\end{align} 
We now separate the proof into two parts. First, assume that $\Psi(\closed) < \infty$. 
Fix $\varepsilon, M > 0$. Note that $\Phi (\btheta; M, \varepsilon) 
:= \max \{ \Phi^{0}_\bigT(\btheta) + \varepsilon \Psi(\closed) , -M \}$ 
is a continuous function by \Cref{lem: limit of Phi kappa}, 
and $\Phi^{\kappa}_\bigT(\btheta) \leq \Phi (\btheta; M, \varepsilon)$ 
for all $\btheta \in \closed$ and for all $\kappa \in [0,\varepsilon]$. 
\begin{align*}
& \limsup_{\kappa \to 0+} \kappa \log \bE_\bigT
\bigg[ \exp \bigg( \frac{1}{\kappa} \Phi^{\kappa}_\bigT(\bDyson) \bigg)\one{\{\bDyson_{[0,\bigT]} \in \closed\}} \bigg] \\
\leq \; & 
\limsup_{\kappa \to 0+} \kappa \log \bE_\bigT
\bigg[ \exp \bigg( \frac{1}{\kappa} \Phi (\bDyson; M, \varepsilon) \bigg)\one{\{\bDyson_{[0,\bigT]} \in \closed\}} \bigg] \\
\leq \; & - \inf_{\btheta \in \closed} (\nDenergy_{\bigT}(\btheta)
- \Phi (\btheta; M, \varepsilon)) 
\qquad \xrightarrow[\varepsilon \searrow 0]{M \nearrow \infty} \qquad 
- \inf_{\btheta \in \closed} 
(\nDenergy_{\bigT}(\btheta)
- \Phi^{0}_\bigT(\btheta) ) .
\end{align*}
Next, if $\Psi(\closed) = +\infty$, then we fix $\epsilon > 0$ and consider the stopping time $\tau_\epsilon$~\eqref{eq:stopping time} from \Cref{lem:bad event bound}. 
Note that the set $\closed(\epsilon) := \closed \cap \{\tau_\epsilon \geq \bigT \}$ is closed, and $\Psi(\closed(\epsilon))<\infty$ because 
the upper bound in~\eqref{eq:differential_inequalities} implies that 
\begin{align*}
\Psi(\closed(\epsilon)) 
\; \leq \; \sup_{\btheta \in \closed(\epsilon)} 
\bigg[\frac{\constaddnew T}{ \constmult}\;+\;\sum_{j=1}^n \int_0^\bigT  \frac{(\newphi^j(\btheta_s))^2 }{2\constmult} \ud s \bigg],
\end{align*}
which is finite since any $\btheta \in \closed(\epsilon)$ in particular satisfies $\btheta_t\in\chamberCl^{\epsilon/2}$ for all $t\in [0,T]$, so each $\newphi^j(\btheta_s)$ in the integral is uniformly bounded as $\newphi^j$ is continuous. 
Therefore,  we see that  
\begin{align*}
\; & 
\limsup_{\kappa \to 0+} \kappa \log \bE_\bigT
\bigg[ \exp \bigg( \frac{1}{\kappa} \Phi^{\kappa}_\bigT(\bDyson) \bigg)\one{\{\bDyson_{[0,\bigT]} \in \closed\}} \bigg] 
\\ 
\leq \; & 
\limsup_{\kappa \to 0+}\kappa \log 
\Big(\Pr^{\kappa} \big[\bDyson \in \closed(\epsilon)\big] + \Pr^{\kappa}\big[ \tau_\epsilon \le \bigT\big] \Big) 
\end{align*}
is bounded from above by the maximum of the two terms
\begin{align*}
& \; \limsup_{\kappa \to 0+}\kappa \log \Pr^{\kappa} \big[\bDyson \in \closed(\epsilon)\big] 
\; \leq \; - \inf_{\btheta \in \closed(\epsilon)} 
(\nDenergy_{\bigT}(\btheta)
- \Phi^{0}_\bigT(\btheta) ) 
\; \leq \; - \inf_{\btheta \in \closed} 
(\nDenergy_{\bigT}(\btheta)
- \Phi^{0}_\bigT(\btheta) ) , \\
& \; \limsup_{\kappa \to 0+}\kappa \log \Pr^{\kappa}\big[ \tau_\epsilon \le \bigT\big] 
\; \leq \; -\upbd(\epsilon, \btheta_0, \partfn) 
\qquad \xrightarrow[]{\epsilon \searrow 0} 
\qquad - \infty ,
\end{align*}
where to bound the first term we used the first part of the proof and the fact that $\closed(\epsilon) \subseteq \closed$,
and noted that the second term is exponentially small thanks to~\eqref{eqn:bad event bound} in~\Cref{lem:bad event bound}.

\noindent {\bf Conclusion.} 
By Lemma~\ref{lem: J as sum of independent and interaction terms}, 
the asserted inequalities~(\ref{eq: limsup claim},~\ref{eq: liminf claim})
follow from the above bounds together with~\eqref{eq: RN_terms}. 
\end{proof}

\begin{remark}\label{rem:minimizers-exist}
It follows from the goodness of the multiradial Dirichlet energy (\Cref{lemma:good_rate_function}) that it attains its minimum on $\BesselSpace{T]}{\btheta_0}$. 
Moreover, from \Cref{thm:Bessel_LDP_general} we see that the minimum equals zero: 
taking $\closed = \open = \BesselSpace{\bigT]}{\btheta_0}$, 
(\ref{eq: limsup claim},~\ref{eq: liminf claim}) together imply that 
\begin{align*}
\min_{\btheta \in \BesselSpaceSmall{\bigT]}{\btheta_0}} \nBessel_\bigT(\btheta) 
= \inf_{\btheta \in \BesselSpaceSmall{\bigT]}{\btheta_0}} \nBessel_\bigT(\btheta) 
= \lim_{\kappa \to 0+}\kappa \log
\underbrace{\PrDyson \big[ \bDyson \in \BesselSpace{\bigT]}{\btheta_0} \big]}_{= \; 1} = 0.
\end{align*}
\end{remark}

\section{LDP for multiradial $\SLE_{0+}$}\label{sec:SLE_LDP}

The goal of this section is to prove our second main result, \Cref{thm:radial_LDP_finite_time}, which is
a finite-time LDP for the $n$-radial $\SLE_\kappa$ process as $\kappa \to 0+$.
Recall that in \Cref{def:n-radial_SLE_driving_functions},  $n$-radial $\SLE$ is defined as the Loewner chain whose ($n$-dimensional) driving process is $(e^{\ii \Bessel^1_t}, \ldots, e^{\ii \Bessel^n_t})$, where $\bBessel$ is the $n$-radial Bessel process (cf.~\Cref{cor:SLE_kappa_measure}). 
As \Cref{thm:Bessel_LDP} gives an LDP for the $n$-radial Bessel process, it would be convenient to just apply the Loewner transform and use the contraction principle (recalled in \Cref{thm:contraction-principle}) 
to deduce an LDP for multiradial $\SLE$. 
Unfortunately, the standard contraction principle cannot be applied directly, since the Loewner transform~\eqref{eq: Loewner transform} is not continuous for the Hausdorff metric, but only in the  Carath\'eodory sense. 
As the latter topology is not very useful for addressing geometric properties of hulls,
we need to address the discontinuities of the Loewner transform under the Hausdorff metric~\eqref{eq: Hausdorff metric}.
From the chordal case~\cite{Peltola-Wang:LDP}, 
we know that discontinuities of the Loewner transform (for the Hausdorff metric) occur at hulls with non-empty interiors. 
In contrast, we show that finite-energy hulls are simple radial multichords (Sections~\ref{subsec:Derivative estimate}~\&~\ref{subsec:finite-energy-gives-simple-curves} --- see in particular \Cref{thm:finite-energy-gives-simple-curves}).
This allows us to sidestep the discontinuities: we can apply the contraction principle on a smaller space where the Loewner transform is continuous, and then extend the LDP to the full space using \Cref{prop:restricted-LDP} and \Cref{lem:caratheodory-vs-hausdorff-convergence}.

Proving that finite-energy hulls are simple radial multichords (\Cref{thm:finite-energy-gives-simple-curves}) is the main work of this section. 
In the chordal case with $n=1$, an analogous result has been verified by two methods. 
On the one hand, 
following the methodology of 
Lind, Marshall, and Rohde~\cite{LMR:Collisions_and_spirals_of_Loewner_traces}, 
Wang used quasiconformal maps to argue that each finite-energy ($n=1$) hull is a quasi-arc~\cite{Wang:Energy_of_deterministic_Loewner_chain}, 
which was later generalized to the case of $n$-multichords in~\cite{Peltola-Wang:LDP}.  
However, for radial multichords with $n \geq 2$, quasiconformal maps do not seem to present the most natural geometric setup, so we do not follow this approach here.
(See the recent~\cite{Abuzaid-Peltola:Large_deviations_of_radial_SLE0} for an elaboration of this approach in the case of one radial chord.)

On the other hand, motivated by rough path theory, in~\cite{Friz-Shekhar:Finite_energy_drivers} Friz and Shekhar  
derived a strong derivative estimate for the Loewner uniformizing map near the tip
for drivers with finite Dirichlet  energy~\eqref{eqn:Dirichlet_energy_def}.
This can be used via standard arguments to imply that the Loewner hulls thus obtained are in fact simple curves. 
In the present work, we employ the strategy used by Friz and Shekhar combined with a generalized version of the restriction property~\cite{LSW:Conformal_restriction_the_chordal_case} 
(see Proposition~\ref{prop:removal-of-hulls-loewner}). 
As a by-product, we obtain a radial version of the main theorem of~\cite{Friz-Shekhar:Finite_energy_drivers}, 
but generalized to allow weight functions $\lambda$ --- see \Cref{thm: FS_estimate}.

\subsection{Multiradial Loewner equation and multiradial $\SLE_\kappa$}
\label{subsec: Multiradial Loewner equation background}

We will now consider a more general (weighted) version of the Loewner equation (\Cref{eqn:multiradial_Loewner_general} below, which generalizes~\eqref{eqn:multiradial_Loewner_1common}). 
This allows us to consider Loewner flow with a more general time-dependent parameterization and enables us to reparameterize radial multichords when necessary. We restrict our attention to ``nice'' weights as follows.

\begin{df} A \emph{weight function} is a c\`adl\`ag (i.e., right-continuous with left limits) and locally integrable function
$\lambda \colon [0,\infty) \to (0,\infty)$.
\end{df}
Notice that if $\lambda$ is a weight function, then the map $t \mapsto \int_0^t \lambda_s \ud s$ is strictly increasing 
(hence, it can be used to define a time change), 
and $\lambda$ is bounded on compact time intervals.

\medskip

\noindent \textbf{Multiradial Loewner equation.}
For any weight function $\lambda$, we define the \emph{multiradial Loewner equation with weight $\lambda$} as the boundary value problem 
\begin{align}\label{eqn:multiradial_Loewner_general}
\nradial{\lambda_t}{\tildeg}{t}{z}{w^j}, 
\qquad \tildeg_0(z)=z ,
\qquad z \in \overline{\bD} , \quad t \geq 0 ,
\end{align}
where the driving functions $w^1_t, \ldots, w^n_t \in \partial \bD$ are non-intersecting and continuous in time. 
The solution $\tildeg_t = \tildeg_t^{\lambda}$ to~\eqref{eqn:multiradial_Loewner_general} is called the \emph{Loewner chain with $\lambda$-common parameterization}. 
Abusing terminology, we also refer to the corresponding hulls $(\tildeK_t)_{t \geq 0}$ as a ``Loewner chain.''
Then, $\tildeg_t \colon \bD\smallsetminus \tildeK_t \to \bD$ 
is the uniformizing map normalized at the origin, and in the parameterization in~\eqref{eqn:multiradial_Loewner_general}, we have 
\begin{align*}
\log \tildeg_t'(0) = n \int_0^t \lambda_s \ud s .
\end{align*} 
Note also that the map $\tildeh_t$ related to $\tildeg_t$ via
$\tildeg_t(e^{\ii u}) = \exp( \ii \tildeh_t(u) )$, and with $w_t^j = \exp(\ii \theta_t^j)$ for $1 \leq j \leq n$, satisfies
\begin{align} \label{eqn:multiradial_Loewner_general_cylinder}
\partial_t \tildeh_t(u) = \lambda_t  \, \sum_{j=1}^n \cot \bigg(\frac{\tildeh_t(u) - \theta_t^j}{2}\bigg) .
\end{align}
We say that the generated hulls $\tildeK_t$ have the \emph{$\lambda$-common parameterization}.

An even more general version of~\eqref{eqn:multiradial_Loewner_general} could be obtained by weighting each term in the sum by a different weight $\lambda^j_t$, which would allow the components of the generated hull to be parameterized at different rates, but this is not needed for the present work. 

\begin{df} \label{def: generate multichord}
Fix $\bigT \in (0,\infty)$. 
Let $\btheta \in \BesselSpace{\bigT]}{}$, let $\lambda$ be a weight function, and let $\tildeg$ and $\tildeK_t$ be as in~\eqref{eqn:multiradial_Loewner_general}.
We say that $\btheta$ \emph{generates a radial multichord} $\bgamma_{[0,\bigT]}$ in $\bD$ 
with the $\lambda$-common parameterization 
if $t \mapsto \gamma^j_t$ is a continuous map from $[0,\bigT]$ to $\overline{\bD}$ such that $\gamma_0^j \in \partial \bD$ for each $1 \leq j \leq n$,
the image $\bgamma_{[0,t]}$ generates $\tildeK_t$ for all $t \in [0,\bigT]$,
and the concatenations of $\gamma^j$ with any simple curves from $\gamma^j_{\bigT}$ to the origin form a radial multichord (as in Definition~\ref{def:radial_multichord}). 
We call the radial multichord $\bgamma_{[0,\bigT]}$  \emph{simple} if its each component $\gamma^j$ is injective, 
$\bgamma_{(0,\bigT]} \subset \bD$, and furthermore 
$\gamma^j_{[0,\bigT]} \cap \gamma^k_{[0,\bigT]}$ for all $j \neq k$.
\end{df}

When $n=1$, we call $\gamma_{[0,\bigT]}$ a (simple) radial chord in $\bD$ with the $\lambda$-parameterization.
In this case, $\gamma$ is also often referred to as the \emph{Loewner trace} in the literature.

\begin{remark}\label{rmk:weight_function}
In general, the geometry of Loewner hulls depends on both the weight function and the driving function; reparameterizing a hull allows one to focus on whichever is more convenient.
For example, the well-known phase transition for ($n=1$) chordal $\SLE_\kappa$ \cite{Rohde-Schramm:Basic_properties_of_SLE} from almost surely simple ($\kappa\leq 4$) to self-touching to space-filling ($\kappa \geq 8$) can be understood by performing a time change so that the driving function is standard Brownian motion $B_t$ and analyzing the resulting weight function.
Indeed, the curves generated by the weighted chordal Loewner equation
\begin{align*}
\partial_t g_t(z) = \frac{ \alpha }{g_t(z)-B_t}, \qquad g_0(z)=z, \qquad z \in \overline{\bH} , \quad t \geq 0 ,
\end{align*}
are almost surely simple if $\alpha\geq 1/2$ and space-filling if $\alpha \leq 1/4$. In this case, the time change allows for comparison between the weight function $\alpha=2/\kappa$ and the parameter of the usual Bessel process on the real line.
In this context, Item~\ref{item: FS_estimate_simple} in \Cref{thm: FS_estimate} is rather surprising: 
there, we show that driving functions with finite energy generate simple radial multichords for \emph{any} weight function that is uniformly bounded away from zero.
\end{remark}

\subsubsection{Multiradial $\SLE_\kappa$, for $\kappa \in (0, 4]$}

The next corollary clarifies the relationship between the measures $\PrDyson$ discussed in \Cref{sec:Bessel_LDP} and $n$-radial $\SLE_\kappa$ processes (\Cref{def:n-radial_SLE_driving_functions}).

\begin{cor}\label{cor:SLE_kappa_measure}
Suppose that $\bB_t = (B^1_t, \ldots, B^n_t)$ is an $n$-dimensional standard Brownian motion in $\bR^n$ 
defined on the filtered probability space $(\Omega, \mathcal F_t, \bP)$, 
where $\mathcal F_\bullet$ is its natural right-continuous completed filtration. 
Fix $\btheta_0 \in \chamber$ and define $\bDyson_t = (\Dyson^{1}_t,\ldots, \Dyson^n_t)  \in \chamber$ by
\begin{align*}
\bDyson_t = \btheta_0 + \sqrt \kappa \bB_t , 
\qquad
\bDyson_0 = \btheta_0 ,
\qquad \textnormal{for $0 < \kappa \leq 4$.}
\end{align*}
Let $z^j_t := e^{\ii \Dyson^j_t}$ for $1 \leq j \leq n$. 
Then, in the measure $\PrDyson$ appearing in \Cref{prop:Bessel_measure}, the process $(z^1_t, \ldots, z^n_t)$ 
comprises the driving functions for $n$-radial $\SLE_\kappa$ started from $\bz_0$.
\end{cor}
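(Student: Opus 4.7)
\textbf{Proof proposal for Corollary~\ref{cor:SLE_kappa_measure}.}
The plan is to combine \Cref{prop:Bessel_measure} with \Cref{def:n-radial_SLE_driving_functions} in an essentially direct manner, with one minor point requiring care (non-collision of the driving process) and one more substantive point (that the Loewner chain does produce a random radial multichord in the sense of \Cref{def:radial_multichord}).

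First, I would recall that by \Cref{prop:Bessel_measure}, under the probability measure $\PrDyson = \Pr^{\kappa,4}$ the process $\bDyson_t = \btheta_0 + \sqrt{\kappa} \, \bB_t$ is the unique strong solution in $C_{\btheta_0}\big([0,\infty),\chamber\big)$ to the system of SDEs~\eqref{eqn:driving_spde}, which is precisely the SDE system appearing in \Cref{def:n-radial_SLE_driving_functions}. Hence, setting $z^j_t := e^{\ii \Dyson^j_t}$ produces an $n$-tuple of points on $\partial\bD$ whose angular coordinates coincide with those specified in \Cref{def:n-radial_SLE_driving_functions}.

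Next, I would verify the non-collision assertion $\blowuptime = +\infty$ almost surely under $\PrDyson$, so that the driving functions are defined (and continuous, non-intersecting) on all of $[0,\infty)$. Since $\kappa \in (0,4]$ corresponds to $\alpha = 4/\kappa \geq 1 \geq \tfrac{1}{2}$, this follows from \Cref{prop:martingale}, which states that the exponential local martingale $M^{\kappa,\alpha}_t$ defining the Girsanov change of measure is in fact a true martingale precisely in this regime, and gives $\blowuptime = +\infty$ almost surely. (Equivalently, one may invoke the comparison with the radial Bessel process of parameter $\alpha \geq 1/2$ recalled after \Cref{def:radial_Bessel_DBM}.)

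Finally, one must check that the Loewner chain driven by $(e^{\ii \Dyson^1_t}, \ldots, e^{\ii \Dyson^n_t})$ through~\eqref{eqn:multiradial_Loewner_1common} actually generates a random radial multichord $\bgamma_t$, as required by \Cref{def:n-radial_SLE_driving_functions}. I expect this to be the main (non-routine) step, since it amounts to a Rohde--Schramm type generation theorem in the multiradial setting. For $\kappa \in (0,4]$, this is exactly the content of the main construction of multiradial $\SLE_\kappa$ in~\cite{Healey-Lawler:N_sided_radial_SLE}, which I would cite directly: they show that the Loewner chain with common parameterization driven by the radial Bessel-type SDE~\eqref{eqn:driving_spde} almost surely generates a random radial multichord, with each component being a simple curve tending to the origin. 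Combining these three points gives the claim: under $\PrDyson$ the tuple $(z^1_t, \ldots, z^n_t)$ agrees in law with the driving functions of $n$-radial $\SLE_\kappa$ started from $\bz_0$ in the sense of \Cref{def:n-radial_SLE_driving_functions}.
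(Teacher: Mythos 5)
Your proposal is correct and follows the same route as the paper, whose proof is simply the one-line observation that the claim is an immediate consequence of \Cref{prop:Bessel_measure} and \Cref{def:n-radial_SLE_driving_functions}. The additional details you supply (non-collision via \Cref{prop:martingale} for $\alpha=4/\kappa\geq 1/2$, and the generation of a radial multichord via~\cite{Healey-Lawler:N_sided_radial_SLE}) are consistent with how the paper bundles those facts into \Cref{def:n-radial_SLE_driving_functions} itself.
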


\begin{proof}
This is an immediate consequence of \Cref{prop:Bessel_measure} and \Cref{def:n-radial_SLE_driving_functions}.
\end{proof}

\begin{remark} \label{rmk:def_justification2} 
In~\cite{Healey-Lawler:N_sided_radial_SLE} the authors consider (for $0<\kappa\leq 4$) a sequence of measures $\mu^\kappa_{t,T}$ which are absolutely continuous with respect to $\bP_t$, with Radon-Nikodym derivative which is a large-time $T$ truncation of the chordal Radon-Nikodym derivative~\eqref{eqn:chordal_RN_deriv}. For each fixed $t$, as $\bigT \to \infty$, the measures $\mu^\kappa_{t,T}$ converge in the finite variation distance to $\PrDyson_t$. Combining this convergence result with Corollary \ref{cor:SLE_kappa_measure} justifies the definition of multiradial $\SLE_\kappa$ that we use in this work (Definition \ref{def:n-radial_SLE_driving_functions}). 
This definition is consistent with the well-known multiradial partition function~\eqref{eq:multiradial_partition_function}, as described in \Cref{rmk:def_justification,rmk:HL_Bessel}.
\end{remark}

Since the present work relies on the construction of $n$-radial $\SLE_\kappa$ in \cite{Healey-Lawler:N_sided_radial_SLE}, it is worthwhile to briefly address our differing choice of parameterization.
In \cite{Healey-Lawler:N_sided_radial_SLE}, for each $\kappa \in (0, 4]$, the authors describe $n$-radial $\SLE_\kappa$ as the Loewner chain generated by 
the multiradial Loewner equation~\eqref{eqn:multiradial_Loewner_general} with weight $\lambda \equiv 4/\kappa$ and driving functions $w^j_t = e^{2 \ii \Bessel^j_t}$, $1 \leq j \leq n$, where 
$\bBessel_t^\alpha = (\Bessel^1_t, \ldots, \Bessel^n_t)$ is the
$n$-radial Bessel process with parameter $\alpha=4/\kappa$ from Definition~\ref{def:radial_Bessel_DBM}\footnote{See~\cite[Theorem~3.12 and the discussion following Corollary~3.13]{Healey-Lawler:N_sided_radial_SLE}.}. 
However, the dependence of the weight function on $\kappa$ (i.e., using the $\lambda \equiv 4/\kappa$-common parameterization) poses complications as $\kappa \to 0+$, so this setup is not amenable to large deviations analysis.
Thus, it will be more convenient for us to consider the corresponding process up to the time change $t \mapsto \frac{\kappa}{4} \, t$, so that the curves have the $1$-common parameterization. 
In this setup, we consider the uniformizing conformal maps $g_t \colon \bD\smallsetminus K_t \to \bD$ normalized at the origin and satisfying the multiradial Loewner equation~\eqref{eqn:multiradial_Loewner_general} with $\lambda \equiv 1$ (i.e., Equation~\eqref{eqn:multiradial_Loewner_1common}). 
In particular, we note that the Loewner hulls generated by~\eqref{eqn:multiradial_Loewner_1common} with the $1$-common parameterization 
are the same as those generated by~\eqref{eqn:multiradial_Loewner_general} with the $\lambda=4/\kappa$-common parameterization, if 
\begin{align*}
g_{t}(z)=\tildeg_{\kappa t/4}(z) 
\qquad \textnormal{and} \qquad 
z^j_t=w^j_{\kappa t/4}.
\end{align*}
We shall address more general time changes in the next Section~\ref{subsub:Time change}.

\begin{remark} \label{rem:spiral_SLE}
For each $\kappa \leq 4$ and an additional parameter $\spr \in \bR$, 
one can similarly define multiradial $\SLE_\kappa^\spr$ with spiraling rate $\spr$ (and with the common parameterization) as the random radial multichord $\bgamma$ 
for which the uniformizing conformal maps $g_t \colon \bD \smallsetminus \bgamma[0,t] \to \bD$ satisfy~\eqref{eqn:multiradial_Loewner_1common}
with driving functions $z^j_t = e^{\ii \Dyson^j_t}$ for $1 \leq j \leq n$, 
where $\bDyson_t = (\Dyson^1_t, \ldots, \Dyson^n_t)$ is the strong solution in $\BesselSpace{\infty)}{\btheta_0}$ to the SDEs~\eqref{eqn:diffusion_in_new_measure_spiral}
\cite{Sheffield-Miller:Imaginary_geometry4, KWW:Commutation_relations_for_two-sided_radial_SLE, HPW:Multiradial_SLE_with_spiral}. 
Our results apply directly to derive an LDP for this process as well (i.e., a version of \Cref{thm:radial_LDP_finite_time}), 
with good rate function obtained from \Cref{cor:Bessel_LDP_spiral} similarly as in~\eqref{eqn:nradial-energy}. 
Indeed, note that for any $\spr, \spr' \in \bR$, we have
\begin{align*}
\nBesselSpiral_\bigT(\btheta) 
=\;& \tfrac{1}{2} \int_0^\bigT \sum_{j=1}^n \big| \tfrac{\ud}{\ud s} \theta^j_s - \big( 2 \phi^j(\btheta_s) + \spr \big)\big|^2 \ud s \\
\le\;& \tfrac{1}{2} \int_0^\bigT \sum_{j=1}^n \bigg(2 \, \big| \tfrac{\ud}{\ud s} \theta^j_s - \big( 2 \phi^j(\btheta_s) + \spr' \big)\big|^2  + 2|\spr'-\spr|^2\bigg)\ud s\\
=\;& 2 \nBesselSpiralPrime_\bigT(\btheta) + n \bigT |\spr'-\spr|^2.
\end{align*}
%by applying the inequality $|A + B|^2 \le 2 (|A|^2 +|B|^2)$.
Taking $\spr'=0$ we may conclude that the multiradial Dirichlet energy $\nBesselSpiral_{\bigT}$ with spiral is finite if and only if 
the multiradial Dirichlet energy $\nBessel_{\bigT}$ without spiral is finite. 
Thus, \Cref{thm:finite-energy-gives-simple-curves} (proven in Section~\ref{subsec:Derivative estimate}) also holds with the assumption $\nBessel_\bigT(\btheta) < \infty$ replaced by the assumption $\nBesselSpiral_{\bigT}(\btheta) < \infty$. 
Using this fact, one can check that also the proof of \Cref{thm:radial_LDP_finite_time} applies verbatim to the spiraling case.
\end{remark}

\begin{remark} 
Multiradial $\SLE_\kappa^\spr$ curves are expected to satisfy the so-called re-sampling property:
for each curve $\gamma^j$ in $\bgamma = (\gamma^1, \ldots, \gamma^n)$, 
conditionally on the other curves $\{\gamma^k \; , k \neq j\}$, the law of $\gamma^j$ is that of the chordal $\SLE_\kappa$ in its natural connected component. 
To prove this property, one should first show that the $n$-radial $\SLE_\kappa$ is supported on radial multichords (as in \Cref{def:radial_multichord}), 
continuous at the origin, 
and elsewhere pairwise disjoint (cf.~\cite{Lawler:Continuity_of_radial_and_two-sided_radial_SLE_at_the_terminal_point}). 
This follows from~\cite{Sheffield-Miller:Imaginary_geometry4} by using a coupling of $\SLE_\kappa$ curves as flow lines of the Gaussian free field,
and will be proven in~\cite{HPW:Multiradial_SLE_with_spiral} using SLE techniques. 
We will not need these properties in the present work.
\end{remark}

\subsubsection{Time changes}
\label{subsub:Time change}

\Cref{rmk:weight_function} describes the application of a particular time change to a Loewner chain with $n=1$. More generally, we see that~\eqref{eqn:multiradial_Loewner_general} is related to~\eqref{eqn:multiradial_Loewner_1common} by the following time change. Let $\lambda_t$ and $\tildeg_t$ as in~\eqref{eqn:multiradial_Loewner_general}, and define
\begin{align}\label{eqn:time change}
\sigma(t) : = \int_0^t \lambda_s\ud s, \qquad  \tau(t) : = \sigma^{-1}(t), \qquad  \textnormal{and}\qquad \hatg_t : = \tildeg_{\tau(t)}.
\end{align}
Then, we have $\tfrac{\ud}{\ud t} \tau(t) = 1/\lambda_{\tau(t)}$, so the chain rule shows that $g_t$ satisfies~\eqref{eqn:multiradial_Loewner_1common} with $\hatw^j_t = w^j_{\tau(t)}$. 
Consequently, this time change allows us to conveniently move between the $1$-common parameterization and the $\lambda$-common parameterization as needed. 

Most importantly, this allows us to reparameterize radial multichords: the property of having finite truncated energy is preserved under a large class of time changes, as the next lemma states.

\begin{lem}\label{lem:time change} 
Fix $\bigT \in (0,\infty)$. 
Let $\timechange \colon [0,\bigT]\to [0,\sigma(\bigT)]$ be strictly increasing and differentiable, with $\timechange(0)=0$, and suppose that $\dot\sigma(t) := \frac{\ud}{\ud t} \sigma(t)$ is uniformly bounded away from zero and infinity, i.e., 
\begin{align*} 
\supnorm{\dot\sigma} 
:= \sup_{t \in [0,\bigT]} | \dot\sigma(t) |
\; \in \; (0, \infty)
\qquad \textnormal{and} \qquad 
\supnorm{\tfrac{1}{\dot\sigma}} 
:= \sup_{t \in [0,\bigT]} \tfrac{1}{| \dot\sigma(t) |}
\; \in \; (0, \infty) .
\end{align*}
For $\btheta \in \BesselSpace{\bigT]}{}$, set $\hat \btheta_t := \btheta_{\sigma(t)}$, and $\hat T := \timechange^{-1}(T)$.
Then, we have 
\begin{align*} 
\nBessel_\bigT (\btheta)<\infty
\qquad \textnormal{if and only if} \qquad 
\nBessel_{\hat \bigT} (\hat \btheta)<\infty .
\end{align*}
\end{lem}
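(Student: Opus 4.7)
The idea is to express \(\nBessel_{\hat{\bigT}}(\hat\btheta)\) directly in terms of \(\btheta\) via the chain rule and then control the resulting integrand. First I would note that since \(\sigma\) is a \(C^1\) diffeomorphism with \(\dot\sigma\) and \(1/\dot\sigma\) both uniformly bounded (hence bi-Lipschitz), the map \(\btheta \mapsto \hat\btheta = \btheta \circ \sigma\) preserves absolute continuity, and if either curve fails to be absolutely continuous then both multiradial Dirichlet energies are \(+\infty\) by definition. So we may assume absolute continuity. Then the chain rule gives \(\tfrac{\ud}{\ud t}\hat\theta^j_t = \dot\sigma(t) \cdot \big(\tfrac{\ud}{\ud s}\theta^j_s\big)\big|_{s=\sigma(t)}\) a.e., so that
\[
\tfrac{\ud}{\ud t}\hat\theta^j_t - 2\phi^j(\hat\btheta_t)
\; = \; \dot\sigma(t) \Big( \tfrac{\ud}{\ud s}\theta^j_s - 2\phi^j(\btheta_s) \Big)\Big|_{s=\sigma(t)} \; + \; 2\big(\dot\sigma(t) - 1\big)\phi^j(\btheta_{\sigma(t)}) .
\]
Using the elementary bound \(|a+b|^2 \leq 2|a|^2 + 2|b|^2\), squaring, summing over \(j\), integrating over \([0,\hat{\bigT}]\), and performing the substitution \(s = \sigma(t)\) (with \(\ud s = \dot\sigma(t)\,\ud t\)) one arrives at an estimate of the form
\[
\nBessel_{\hat{\bigT}}(\hat\btheta)
\; \leq \; 2\,\supnorm{\dot\sigma}\,\nBessel_{\bigT}(\btheta)
\; + \; C \int_0^{\bigT} \sum_{j=1}^n \big|\phi^j(\btheta_s)\big|^2 \ud s ,
\]
for a constant \(C\) depending only on the uniform bounds on \(\dot\sigma\) and \(1/\dot\sigma\).

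The only point requiring thought is the finiteness of the last integral, which is where I expect the main (though minor) obstacle. This is handled by invoking \Cref{cor:finite-energy-characterization}: \(\nBessel_{\bigT}(\btheta) < \infty\) forces \(\detblowuptime(\btheta) > \bigT\), so the image \(\{\btheta_s : s \in [0,\bigT]\}\) is a compact subset of \(\chamber\) that stays uniformly bounded away from the collision diagonal. Since each \(\phi^j\) is continuous on \(\chamber\), it is therefore uniformly bounded along the trajectory, and the integral is finite. This proves the implication \(\nBessel_{\bigT}(\btheta) < \infty \Longrightarrow \nBessel_{\hat{\bigT}}(\hat\btheta) < \infty\). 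The converse is symmetric: the inverse time change \(\tau = \sigma^{-1}\) satisfies exactly the same hypotheses, since \(\dot\tau(s) = 1/\dot\sigma(\tau(s))\) inherits uniform upper and lower bounds from those on \(1/\dot\sigma\) and \(\dot\sigma\), respectively.

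The underlying reason a naive ``pull \(\dot\sigma\) out of the integral'' argument does not work is that, under reparameterization, the velocity \(\tfrac{\ud}{\ud s}\theta^j_s\) scales by \(\dot\sigma\) while the drift \(2\phi^j(\btheta_s)\) does not; the mismatch term \(2(\dot\sigma-1)\phi^j(\btheta_{\sigma(t)})\) is precisely what makes uniform boundedness of \(\phi^j\) along the trajectory essential. Note that while \(\phi^j\) is unbounded on \(\chamber\) (blowing up near the diagonal), the finite-energy hypothesis via \Cref{cor:finite-energy-characterization} is exactly what prevents the trajectory from approaching that singular set.
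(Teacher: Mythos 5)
Your proof is correct, but it takes a genuinely different route from the paper's. The paper's proof is shorter and more structural: it invokes \Cref{cor:finite-energy-characterization} as a two-part criterion (finite one-particle Dirichlet energies, and $\detblowuptime > T$), and then simply checks that each part is preserved under the time change — the sandwich bound $\supnorm{\tfrac{1}{\dot\sigma}}^{-1}\nDenergy_{\bigT}(\theta^j) \le \nDenergy_{\hat T}(\hat\theta^j) \le \supnorm{\dot\sigma}\nDenergy_{\bigT}(\theta^j)$ handles the first part, and the trivial observation that reparameterization preserves the image trajectory (hence the collision time in the appropriate sense) handles the second. You instead work directly with the multiradial Dirichlet energy, decomposing $\tfrac{\ud}{\ud t}\hat\theta^j_t - 2\phi^j(\hat\btheta_t)$ into a rescaled copy of the original integrand plus the cross-term $2(\dot\sigma - 1)\phi^j$, and then use \Cref{cor:finite-energy-characterization} only indirectly, to ensure $\detblowuptime > T$ and hence that $\phi^j$ is bounded on the (compact) trajectory so the cross-term integral is finite. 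Both proofs rest on the same corollary; the paper's use of it is cleaner because it makes the whole time-change question a one-line reduction, whereas yours requires the explicit change-of-variables computation and the $|a+b|^2 \le 2|a|^2 + 2|b|^2$ estimate. The trade-off is that your argument produces a quantitative inequality $\nBessel_{\hat\bigT}(\hat\btheta) \le 2\supnorm{\dot\sigma}\,\nBessel_\bigT(\btheta) + C\int_0^\bigT\sum_j|\phi^j(\btheta_s)|^2\ud s$, which the paper's qualitative argument does not. One small caveat: you describe $\sigma$ as a $C^1$ diffeomorphism, but the lemma only assumes differentiability (with uniform bounds on $\dot\sigma$ and $1/\dot\sigma$); this is still enough for bi-Lipschitzness and the a.e.\ chain rule, so the argument goes through, but the hypothesis should be quoted accurately.
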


In particular, if the time change $\timechange$ is defined by~\eqref{eqn:time change} for $\lambda$ bounded away from zero and infinity, then
the conclusion of \Cref{lem:time change} is that on finite time intervals, finite-energy drivers for 1-common and $\lambda$-common parameterizations coincide up to time change. 

\begin{proof}
Since $\sigma$ is strictly increasing and differentiable, 
we have $\dot\sigma(t) > 0$ for all $t$, so we
can estimate the Dirichlet energy of each $\hat \theta^j$ by 
\begin{align*}
\supnorm{\tfrac{1}{\dot\sigma}} \, \nDenergy_{\bigT}(\theta^j) 
\; \leq \; \nDenergy_{\hat T} (\hat \theta^j) 
\; \leq \; 
\supnorm{\dot\sigma} \, \nDenergy_{\bigT}(\theta^j) .
\end{align*} 
Moreover, we have $\detblowuptime( \btheta)> T$ if and only if $\detblowuptime(\hat \btheta)>\hat T$, and
\Cref{cor:finite-energy-characterization} thus implies that $\nBessel_{\bigT}(\btheta) < \infty$ is equivalent to $\nBessel_{\hat \bigT}(\hat{\btheta}) < \infty$. 
\end{proof}

\subsection{Derivative estimate for finite-energy Loewner chains for $n=1$}
\label{subsec:Derivative estimate}

In this section, we consider solutions to the 
(single) radial Loewner equation~\eqref{eqn:multiradial_Loewner_general_cylinder} (with $n=1$) with some weight function $\lambda \colon [0,\bigT] \to (0,\infty)$. 
A well-known condition for 
the property that the driving function 
$\theta \in \CameronMartinSpace{\bigT]}{}$ generates a radial chord $\gamma_{[0,\bigT]}$ in $\bD$ 
is an estimate for the derivative of the inverse map 
$\tildef_t := \tildeh_t^{-1}$ near the driving function $\theta_t$ (locally) uniformly in time. 
More precisely, to verify the existence of the Loewner trace $\gamma$, it suffices to show the existence of the radial limit at its tip (see, e.g.,~\cite[Theorem~4.1]{Rohde-Schramm:Basic_properties_of_SLE} or~\cite[Theorem~6.4]{Kemppainen:SLE_book}):
\begin{align} \label{eq: limit curve}
\gamma_t := \lim_{y \to 0+} \exp \big( \ii \,  \tildef_t(\theta_t + \ii y) \big) , 
\qquad \textnormal{uniformly for all } t \in [0, T] .
\end{align}
It is not hard to check (see, e.g.,~\cite[Appendix]{Friz-Shekhar:Finite_energy_drivers}  or~\cite[Theorem~3.6]{Rohde-Schramm:Basic_properties_of_SLE}) 
that the limit~\eqref{eq: limit curve} exists uniformly in time if 
there exists a constant $b \in (0, 1)$ such that
\begin{align} \label{eq: uniform bound for f'}
|\tildef_t'(\theta_t + \ii y)| \lesssim y^{b-1} , \qquad 
\textnormal{for all } y > 0 \textnormal{ and } t \in [0,\bigT] .
\end{align}
When $\theta$ has finite energy, 
the derivative estimate~\eqref{eq: uniform bound for f'} holds (in a very strong form), 
and hence, the Loewner trace~\eqref{eq: limit curve} exists and is continuous in time.

A chordal version of the next result appeared in~\cite[Theorem~2(i)]{Friz-Shekhar:Finite_energy_drivers} without any weight function. 
\Cref{thm: FS_estimate} includes a general weight function and thanks to its radial setup should be useful in applications to various planar growth processes.

\begin{thm} \label{thm: FS_estimate}
Fix $n=1$ and $\bigT \in (0,\infty)$. 
Let $\lambda \colon [0,\bigT] \to (0,\infty)$ be a weight function that is uniformly bounded away from zero, i.e., 
\begin{align*}
\supnorm{\tfrac{1}{\lambda}} 
:= \sup_{t \in [0,\bigT]} \tfrac{1}{| \lambda_t |}
< \infty . 
\end{align*}
Let $\theta \in \HoneSpace{\bigT]}{0}$ 
\textnormal{(}i.e., absolutely continuous such that $\theta_0 = 0$ and $\nDenergy_{\bigT}(\theta) < \infty$\textnormal{)}. 
\begin{enumerate}
\item \label{item: FS_estimate_derivative}
Then, we have
\begin{align} \label{eq: strong der estimate}
\big| \tildef_t'(\theta_t + \ii y) \big| 
\leq \; & 
\exp \bigg( \frac{1}{2} \, \supnorm{\tfrac{1}{\lambda}}  \, \nDenergy_{\bigT}(\theta) \bigg) ,
\qquad \textnormal{for all } y > 0 ,
\end{align}
where $\tildef_t := \tildeh_t^{-1}$ is the inverse of the Loewner map $\tildeh_t$ satisfying~\eqref{eqn:multiradial_Loewner_general_cylinder} with $n=1$.

\item \label{item: FS_estimate_simple}
Moreover, $\theta$ generates a simple radial chord $\gamma_{[0,\bigT]}$ in $\bD$ with the $\lambda$-parameterization.
\end{enumerate}
\end{thm}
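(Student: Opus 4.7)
For part~\ref{item: FS_estimate_derivative}, I adapt the Friz--Shekhar strategy from~\cite{Friz-Shekhar:Finite_energy_drivers} to the weighted radial setting. Fix $y > 0$, let $z_0 := \tildef_\bigT(\theta_\bigT + \ii y)$, and track the forward flow $Z_t := \tildeh_t(z_0) - \theta_t =: X_t + \ii Y_t$, so that $Z_\bigT = \ii y$ and $|\tildef_\bigT'(\theta_\bigT + \ii y)| = |\tildeh_\bigT'(z_0)|^{-1}$. The one-chord radial Loewner equation~\eqref{eqn:multiradial_Loewner_general_cylinder} produces the ODE system
\begin{align*}
\dot X_t = \frac{\lambda_t \sin X_t}{\cosh Y_t - \cos X_t} - \dot\theta_t, \qquad
\dot Y_t = -\frac{\lambda_t \sinh Y_t}{\cosh Y_t - \cos X_t},
\end{align*}
so $Y_t$ strictly decreases to $Y_\bigT = y$; the derivative evolves as $\partial_t \log \tildeh_t'(z_0) = -\tfrac{\lambda_t}{2}\csc^2(Z_t/2)$. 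Combining the trigonometric identities $|\sin(Z_t/2)|^2 = (\cosh Y_t - \cos X_t)/2$ and $\Re\csc^2(Z_t/2) = 2(1-\cos X_t \cosh Y_t)/(\cosh Y_t - \cos X_t)^2$ with the above ODEs yields the key identity
\begin{align*}
2\,\partial_t \log\!\big|\tildeh_t'(z_0)\big| + \partial_t \log\!\big(\cosh Y_t - \cos X_t\big) = -\lambda_t - \frac{\sin X_t\,\dot\theta_t}{\cosh Y_t - \cos X_t} .
\end{align*}

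Integrating this identity from $0$ to $\bigT$ and rearranging gives a representation of $2\log|\tildef_\bigT'(\theta_\bigT + \ii y)|$ as $\int_0^\bigT \lambda_t\,dt + \int_0^\bigT \sin X_t\,\dot\theta_t/(\cosh Y_t - \cos X_t)\,dt + \log(2\sinh^2(y/2)/(\cosh Y_0 - \cos X_0))$. I bound the cross integral via Young's inequality $|ab| \leq a^2/(4\lambda_t) + \lambda_t b^2$ with $a = \dot\theta_t$ and $b = \sin X_t/(\cosh Y_t - \cos X_t)$; this contributes a term proportional to $\supnorm{1/\lambda}\,\nDenergy_\bigT(\theta)$ plus the remainder $\int_0^\bigT \lambda_t \sin^2 X_t/(\cosh Y_t - \cos X_t)^2\,dt$. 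Using the elementary factorization $\sin^2 X = (1-\cos X)(1+\cos X) \leq (1+\cos X)(\cosh Y - \cos X)$, this remainder is dominated by $\int_0^\bigT \lambda_t(1+\cos X_t)/(\cosh Y_t - \cos X_t)\,dt$, which evaluates in closed form as $2\log(\sinh(Y_0/2)/\sinh(y/2)) - \int_0^\bigT \lambda_t\,dt$ via the ODE identities $\int_0^\bigT \lambda_t/(\cosh Y_t - \cos X_t)\,dt = \log(\tanh(Y_0/2)/\tanh(y/2))$ (which follows from $\partial_t \log \tanh(Y_t/2) = -\lambda_t/(\cosh Y_t - \cos X_t)$) and $\int_0^\bigT \lambda_t(\cosh Y_t - 1)/(\cosh Y_t - \cos X_t)\,dt = 2\log(\cosh(Y_0/2)/\cosh(y/2))$. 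The two $\int_0^\bigT \lambda_t\,dt$ contributions cancel exactly, and the residual $\log(2\sinh^2(Y_0/2)) - \log(\cosh Y_0 - \cos X_0) \leq 0$ (since $\cosh Y_0 - \cos X_0 \geq \cosh Y_0 - 1 = 2\sinh^2(Y_0/2)$), delivering the estimate~\eqref{eq: strong der estimate}.

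For part~\ref{item: FS_estimate_simple}, the uniform derivative bound~\eqref{eq: strong der estimate} combined with the Koebe distortion theorem implies that $\tildef_t(\theta_t + \ii y)$ is Cauchy as $y \to 0+$, uniformly in $t \in [0, \bigT]$; this is the standard criterion for trace existence, see e.g.~\cite[Theorem~4.1]{Rohde-Schramm:Basic_properties_of_SLE}. Hence $\gamma_t := \lim_{y \to 0+} \exp(\ii \tildef_t(\theta_t + \ii y))$ defines a continuous radial chord in $\bD$ with the $\lambda$-parameterization generated by $\theta$. Simplicity (injectivity of $\gamma$ and $\gamma_{(0,\bigT]} \subset \bD$) then follows from the uniform derivative estimate by standard arguments, cf.~\cite{Wang:Energy_of_deterministic_Loewner_chain, Friz-Shekhar:Finite_energy_drivers}. \textbf{Main obstacle.} The technical heart is the precise cancellation in part~\ref{item: FS_estimate_derivative}: the $\int_0^\bigT \lambda_t\,dt$ term (which grows linearly in $\bigT$, as does the apparently divergent boundary contribution $\log(\cosh Y_0 - \cos X_0)$) must be absorbed exactly against the log-boundary data via the closed-form ODE identities for $Y_t$. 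This is the main adaptation required to go from the chordal case of~\cite{Friz-Shekhar:Finite_energy_drivers} to the weighted radial setting, where the trigonometric algebra is considerably more delicate than its rational chordal counterpart.
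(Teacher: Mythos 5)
Your derivation of Item~\ref{item: FS_estimate_derivative} is correct; it is mathematically equivalent to the paper's but differently packaged. The paper works with the mirror backward flow $\tildep_s(z) = \tildeh_{t-s}(\tildef_t(z+\theta_t)) - \theta_t$ and the auxiliary variable $G_s = \integrand_s - X_s$, whereas you track the forward flow from $z_0 := \tildef_\bigT(\theta_\bigT + \ii y)$; these are related by the time reversal $s \mapsto \bigT - s$, and both integrate the same equation $\partial_t \log |\tildeh_t'| = -\tfrac{\lambda_t}{2}\Re\csc^2(Z_t/2)$. Your specific algebraic decomposition — the identity $2\partial_t\log|\tildeh_t'| + \partial_t\log(\cosh Y_t - \cos X_t) = -\lambda_t - \sin X_t \dot\theta_t/(\cosh Y_t - \cos X_t)$, Young's inequality with parameter $\lambda_t$, the factorization $\sin^2 X \leq (1+\cos X)(\cosh Y - \cos X)$, and the closed-form evaluation via $\partial_t \log\tanh(Y_t/2)$ and $\partial_t \log\cosh(Y_t/2)$ — is different from the paper's, which groups the boundary contributions as $\log(\sinh Y_t/\sinh Y_0) - \log(N_t/N_0)$ and bounds the integrand $(\partial_s G_s)(\partial_s \integrand_s) - (\partial_s G_s)^2$ pointwise by $\tfrac14 (\partial_s \integrand_s)^2$. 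I checked your algebra; it is sound, and your split in fact delivers the sharper bound $2\log|\tildef_\bigT'(\theta_\bigT + \ii y)| \leq \tfrac14 \int_0^\bigT \dot\theta_s^2/\lambda_s\,\ud s$ (i.e., a factor of two better than what~\eqref{eq: strong der estimate} states), because it uses the slack in both the boundary terms and the $\sin^2 X$ factorization. This is a nice variant, though it lands in the same place.

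Item~\ref{item: FS_estimate_simple}, however, has a genuine gap. You establish existence and continuity of the trace correctly, but then write that simplicity ``follows from the uniform derivative estimate by standard arguments, cf.~\cite{Wang:Energy_of_deterministic_Loewner_chain, Friz-Shekhar:Finite_energy_drivers}.'' Those references treat the chordal case, where one can exploit either the explicit algebraic form of the chordal Loewner equation or its Brownian/parabolic scaling invariance; neither is available here (the paper explicitly flags this when introducing the theorem). The derivative estimate alone does not rule out the trace hitting $\partial\bD\smallsetminus\{1\}$ or intersecting its own past. The paper closes this with a radial-specific argument: it first reduces a self-intersection at time $\tau$ to a boundary hit for the continued Loewner chain (using additivity of the Dirichlet energy), then reduces to $\lambda \equiv 1$ via \Cref{lem:time change}, and finally tracks the angle $\omega_t = \tildeh_t(-\ii \log x) - \theta_t$ of a hypothetical hit point $x \in \partial\bD\smallsetminus\{1\}$ toward its swallowing time. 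Expanding $\int |\dot\omega_s - \dot\xi_s|^2$ and dropping the square yields $2\nDenergy_{\bigT}(\theta) \geq -4\log(\sin(\omega_t/2)/\sin(\omega_0/2)) \to \infty$ as $t \to \swallowtime{x}-$, contradicting $\nDenergy_\bigT(\theta) < \infty$. You need to supply an argument of this kind (or the alternative quasiconformal-distortion route sketched in the paper's discussion) to complete the proof of simplicity.
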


The proof of Item~\ref{item: FS_estimate_derivative} 
uses a computation similar to that in~\cite[Proof of Theorem~4]{Friz-Shekhar:Finite_energy_drivers}.
(Though such computations have been used in earlier works, including~\cite{Lind:Sharp_condition_for_Loewner_equation_to_generate_slits, Marshall-Rohde:The_loewner_differential_equation_and_slit_mappings, 
Lawler:Multifractal_analysis_of_the_reverse_flow_for_Schramm-Loewner_evolution, 
LMR:Collisions_and_spirals_of_Loewner_traces,
Lawler-Viklund:Optimal_Holder_exponent_for_the_SLE_path}.)
The proof of Item~\ref{item: FS_estimate_simple} relies on the bound~\eqref{eq: uniform bound for f'} implied by Item~\ref{item: FS_estimate_derivative} together with 
an argument that the resulting curve is indeed \emph{simple}, which differs from prior arguments used in the chordal case 
(that in the literature rely on the specific form of the chordal Loewner equation, or scale-invariance which is absent in the radial case).
Alternatively, one could estimate the quasiconformal distortion to show that radial finite-energy hulls are quasislits as in~\cite{Marshall-Rohde:The_loewner_differential_equation_and_slit_mappings,
LMR:Collisions_and_spirals_of_Loewner_traces, Abuzaid-Peltola:Large_deviations_of_radial_SLE0}.

\begin{proof}
Fix $t \ge 0$ and write $\integrand_s := \theta_t-\theta_{t-s}$. 
Then, the (mirror) backward Loewner flow
\begin{align*}
\tildep_s(z) := \tildeh_{t-s}(\tildef_t(z + \theta_t)) - \theta_t , \qquad 0 \le s \le t ,
\end{align*}
satisfies $\tildep_t(z) = \tildef_t(z+\theta_t) - \theta_t$ and the backward Loewner equation
\begin{align*} 
\partial_s \tildep_s(z) =\;& - \ell_s \cot \bigg(\frac{\tildep_s(z) + \integrand_s}{2}\bigg) , \qquad \tildep_0(z) = z , 
\qquad \ell_s := \lambda_{t-s} , \qquad 0 \le s \le t .
\end{align*}
Writing
\begin{align*}
\tildep_s(z) + \integrand_s = X_s + \ii Y_s 
\qquad \textnormal{and} \qquad 
N_s := \cos (X_s) - \cosh (Y_s) ,
\end{align*}
we find for the inverse Loewner map the equation
\begin{align*}
\partial_s \log|\tildef_s'(z + \theta_t)| 
= \partial_s\log|p'_s(z)|  
= \ell_s\frac{(1 - \cos (X_s) \cosh (Y_s))}{N_s^2} , \qquad 0 \le s \le t .
\end{align*}
Writing also $G_s := \integrand_s - X_s$, we obtain
\begin{align*}
\partial_s X_s = \partial_s \integrand_s - \partial_s G_s , \qquad 
\partial_s Y_s = \ell_s\frac{\sinh (Y_s)}{N_s} , \qquad 
\partial_s G_s = \ell_s\frac{\sin (X_s)}{N_s} .
\end{align*}
Now, a straightforward computation shows that 
\begin{align*}
\partial_s \log|\tildef_s'(z + \theta_t)|
=\;& -\ell_s\frac{\sinh^2 (Y_s)}{N_s^2} 
+ \ell_s\frac{\cosh (Y_s)}{N_s}
= -\frac{\sinh (Y_s) }{N_s} (\partial_s Y_s) + \frac{\partial_s \sinh (Y_s)}{\sinh (Y_s)} .
\end{align*}
To write this in a more useful form, let us compute
\begin{align*}
\frac{\partial_s N_s}{N_s} =\;&  \frac{\sinh (Y_s)}{N_s} (\partial_s Y_s) + \frac{\sin (X_s) }{N_s} (\partial_s X_s)  
\\
=\;& \frac{\sinh (Y_s)}{N_s} (\partial_s Y_s) 
+ \frac{1}{\ell_s} (\partial_s G_s)( \partial_s \integrand_s - \partial_s G_s) .
\end{align*}
Putting the above computations together, we see that
\begin{align}\label{eqn:log-derivative}
\log|\tildef_s'(z + \theta_t)| = \log \bigg( \frac{\sinh (Y_t)}{\sinh (Y_0)} \bigg) 
- \log \bigg(\frac{N_t}{N_0}\bigg) 
+ \int_0^t \Big( (\partial_s G_s) (\partial_s \integrand_s) - (\partial_s G_s)^2 \Big) \frac{\ud s}{\ell_s} .
\end{align}
To evaluate~\eqref{eq: strong der estimate}, 
take $z = \ii y$, with $y>0$, so that $X_0 = 0$ and $Y_0 = y$. Then, since $Y_s$ and $\partial_s Y_s$ are positive, we see that
\begin{align*}
\log \bigg( \frac{\sinh (Y_t)}{\sinh (Y_0)} \bigg) - \log \bigg(\frac{N_t}{N_0}\bigg) 
\leq \;& \log \bigg( \frac{\sinh (Y_t)}{\sinh (Y_0)} \bigg) - \log \bigg(\frac{\cosh (Y_t) - 1}{\cosh (Y_0) - 1} \bigg) \\
=\;&\int_0^t \bigg(\frac{\cosh (Y_s)}{\sinh (Y_s)} - \frac{\sinh (Y_s)}{\cosh (Y_s) - 1}\bigg) (\partial_s Y_s) \ud s\\
=\;& -\int_0^t \frac{\partial_s Y_s}{\sinh (Y_s)}\ud s \le 0 .
\end{align*}
Finally, noting that $\frac{1}{4}(\partial_s \integrand_s)^2 \ge (\partial_s G_s) (\partial_s \integrand_s) - (\partial_s G_s)^2$, we obtain from~\eqref{eqn:log-derivative} the sought estimate~\eqref{eq: strong der estimate}:
\begin{align*}
\log|\tildef_t'(iy + \theta_t)| 
\; \le \; \frac{1}{4}\int_0^t (\partial_s \integrand_s)^2 \frac{\ud s}{\ell_s} 
\; = \; \frac{1}{4}\int_0^t (\partial_s \theta_s)^2 \frac{\ud s}{\lambda_s} 
\; \le \; \frac{1}{2} \, \supnorm{\tfrac{1}{\lambda}}  \, \nDenergy_{\bigT}(\theta) .
\end{align*}
This proves Item~\ref{item: FS_estimate_derivative}. 
To prove Item~\ref{item: FS_estimate_simple}, note first that 
the estimate~\eqref{eq: strong der estimate} 
already implies that, for every $t \in [0,\bigT]$ and $0 < y < y' \le y_0$, we have
\begin{align*}
\big| \tildef_t(\theta_t + \ii y) -  \tildef_t(\theta_t + \ii y') \big|
\le  \; & \int_y^{y'} |\tildef_t'(\theta_t + \ii u)| \ud u \\
\le \; & y_0 \exp \bigg( \frac{1}{2} \, \supnorm{\tfrac{1}{\lambda}}  \, \nDenergy_{\bigT}(\theta) \bigg) 
\qquad \xrightarrow{y_0 \to 0} \qquad 0.
\end{align*}
This shows that the radial limit~\eqref{eq: limit curve} exists uniformly in time and in particular is continuous in time. 
By arguments similar to~\cite[Theorem~4.1]{Rohde-Schramm:Basic_properties_of_SLE}, 
this then implies that $\theta$ generates a radial chord $\gamma_{[0,\bigT]}$ in $\bD$. 
It remains to show that $\gamma$ is simple. 
Observe that if $\gamma$ is not simple,
then there exists a time $\tau \in [0,\bigT]$ such that one of the following holds:
\begin{enumerate}[{(i)}]
\item $\gamma$ intersects the boundary at some point $\gamma(\tau) = x \in \partial \bD \smallsetminus \{1\}$ at time $\tau = \swallowtime{x}$; or

\item at time $\tau$, the curve $\gamma$ intersects its own past, so $\gamma(\tau) = \gamma(\tau')$ for some $0\leq \tau' <\tau$.
\end{enumerate} 
If scenario (ii) occurs, then  for any intermediate time $s \in (\tau', \tau)$, the part $t \mapsto \tildeg_s(\gamma_{s+t}) =: \tilde\gamma_t$ of the curve after time $s$ hits $\partial \bD \smallsetminus \{e^{\ii\theta_s}\}$ at time $t = \tau-s$. 
By additivity of the Dirichlet energy~\eqref{eqn:Dirichlet_energy_def}, 
the energy of the driving function $\tilde\theta$ of $\tilde\gamma$ 
satisfies $\nDenergy_{\bigT-s}(\tilde\theta) \leq \nDenergy_{\bigT}(\theta)$, so scenario (ii) reduces to scenario (i). 
It thus remains to show that scenario (i) cannot occur. 
Thanks to \Cref{lem:time change}, by making a time change we may assume without loss of generality that $\lambda_t \equiv 1$. 
Suppose, towards a contradiction, that scenario (i) occurs for some $\gamma(\tau) = x \in \partial \bD \smallsetminus \{1\}$.
Consider the time evolution~\eqref{eqn:multiradial_Loewner_general_cylinder} (with $n=1$) of 
$\xi_t := \tildeh_t(-\ii \log x) \in (0,2\pi)$:
\begin{align*}
\tfrac{\ud}{\ud t} \xi_t 
= \cot \bigg( \frac{\xi_t - \theta_t}{2} \bigg)
= \cot \bigg( \frac{\omega_t}{2} \bigg) , 
\qquad \textnormal{where} \qquad 
\omega_t := \xi_t - \theta_t 
\end{align*}
satisfies $\omega_0 = \xi_0 \in (0,2\pi)$. 
At the hitting time $\swallowtime{x}$ to $x$, 
we have $\omega_{\swallowtime{x}} \in \{0,2\pi\}$, and
\begin{align*}
\infty \; > \;
2 \nDenergy_\bigT(\theta)
\; \geq \;
2 \nDenergy_{\swallowtime{x}}(\theta)
\; \geq \; \; & 2 \limsup_{t \to \swallowtime{x}-} \nDenergy_t(\theta)  \\
= \; & \limsup_{t \to \swallowtime{x}-}
\int_0^t \big( \tfrac{\ud}{\ud s} \omega_s - \tfrac{\ud}{\ud s} \xi_s \big)^2 \ud s  \\
= \; & \limsup_{t \to \swallowtime{x}-}
\int_0^{t} \bigg( 
\big(\tfrac{\ud}{\ud s} \omega_s\big)^2 
- 2 \big( \tfrac{\ud}{\ud s} \, \omega_s \big) \cot\bigg(\frac{\omega_s}{2} \bigg) 
+ \cot^2\bigg(\frac{\omega_s}{2} \bigg)  \bigg) \ud s  \\
\geq \; & - 2 \liminf_{t \to \swallowtime{x}-}
\int_0^{t} \big( \tfrac{\ud}{\ud s} \omega_s \big) \,  \cot\bigg(\frac{\omega_s}{2} \bigg) \ud s  \\
= \; & - 4 \liminf_{t \to \swallowtime{x}-}
\int_0^{t} \frac{\ud}{\ud s} \bigg( \log \sin\bigg(\frac{\omega_s}{2} \bigg) \bigg) \ud s  \\
= \; & - 4 \,  \liminf_{t \to \swallowtime{x}-}
\log \bigg( \frac{\sin \big( \frac{\omega_t}{2} \big)}{\sin \big( \frac{\omega_0}{2} \big)} \bigg)
\; = \; \infty .
\end{align*}
This contradiction shows that scenario (i) cannot occur, and finishes the proof.
\end{proof}

\begin{remark} 
As a consequence of the proof of Theorem~\ref{thm: FS_estimate}, we get the following form for the derivative of the inverse Loewner chain (compare with~\cite[Proposition~1]{Friz-Shekhar:Finite_energy_drivers}):  
\begin{align*}
\log|\tildef_s'(z + \theta_t)| 
= \log \bigg( \frac{\sinh (Y_t)}{\sinh (Y_0)} \bigg) - \log \bigg(\frac{N_t}{N_0} \bigg)
+ \int_0^t \Big( (\partial_s G_s) (\partial_s \integrand_s) 
- (\partial_s G_s)^2 \Big) \frac{1}{\ell_s} \ud s .
\end{align*}
\end{remark}

By a closer investigation of the above computation, it should also be possible to extend other results in~\cite{Friz-Shekhar:Finite_energy_drivers} 
(for example,~\cite[Theorem~4]{Friz-Shekhar:Finite_energy_drivers} in the context of 
It\^o-F\"ollmer type integrals). 
Such generalizations would be, however, beyond the applications that we have in mind in the present work, 
so we shall not attempt to do this.

\subsection{Finite-energy hulls are simple radial multichords}
\label{subsec:finite-energy-gives-simple-curves}

The purpose of this section is to prove~\Cref{thm:finite-energy-gives-simple-curves}. 
The proof comprises a few steps. 
We first show that for each $t \in [0,\bigT]$, the hull $K_t$ consists of $n$ disjoint sets which only touch the boundary $\partial \bD$ at the starting points $z^1_0, \ldots, z^n_0$ 
(\Cref{prop:hulls_avoid_boundary_and_each_other}). 
We then derive a generalized version of the restriction property (\Cref{prop:removal-of-hulls-loewner}, cf.~\cite{LSW:Conformal_restriction_the_chordal_case}), 
which enables us to pass from the case of one radial curve to the case of several curves. 
We combine these results with \Cref{thm: FS_estimate} to finish 
the proof of \Cref{thm:finite-energy-gives-simple-curves} in the end.

\begin{prop} \label{prop:hulls_avoid_boundary_and_each_other}
Consider a multiradial Loewner chain with the 
$1$-common parameterization for which the uniformizing conformal maps $g_t \colon \bD \smallsetminus K_t \to \bD$ satisfy  Equation~\eqref{eqn:multiradial_Loewner_1common}
with driving functions $z^j_t = e^{\ii \theta^j_t}$ for $1 \leq j \leq n$, 
where $\btheta = (\theta^1, \ldots, \theta^n) 
\in C_{\btheta_0}([0,\bigT], \chamber)$.
If the multiradial Dirichlet energy of $\btheta$ is finite, i.e., 
$\nBessel_\bigT(\btheta) < \infty$, then we have 
\begin{align*}
K_t = \bigsqcup_{j=1}^n K^j_t , \qquad \textnormal{for each $t \in [0,\bigT]$,}
\end{align*} 
where $K^j_t$ are pairwise disjoint connected hulls such that $K^j_t \cap \partial \bD = \{e^{\ii \theta^j_0}\}$, for all $j$. 
\end{prop}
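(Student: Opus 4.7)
The plan is to combine the non-collision of driving functions (given by the finite-energy hypothesis) with the single-chord derivative estimate \Cref{thm: FS_estimate}, using the generalized restriction property \Cref{prop:removal-of-hulls-loewner} to reduce from the multiradial to the single-radial setting.

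The first step is to exploit \Cref{cor:finite-energy-characterization}: the hypothesis $\nBessel_\bigT(\btheta) < \infty$ implies $\detblowuptime(\btheta) > \bigT$, so by continuity of $\btheta$ on the compact interval $[0,\bigT]$ there is some $\delta > 0$ with $|e^{\ii \theta^j_t} - e^{\ii \theta^k_t}| \geq \delta$ for all $j \neq k$ and all $t \in [0,\bigT]$, while each coordinate $\theta^j$ is absolutely continuous with finite Dirichlet energy $\nDenergy_\bigT(\theta^j) < \infty$. Next, I would set up a bootstrap argument. Define
\[
T^* := \sup \Big\{ t \in [0,\bigT] \;\Big|\; K_s = \bigsqcup_{j=1}^n K^j_s \textnormal{ with the claimed properties, for all } s \in [0,t] \Big\}.
\]
A short-time analysis of \eqref{eqn:multiradial_Loewner_1common} --- using that $K_0 = \{e^{\ii \theta^1_0}, \ldots, e^{\ii \theta^n_0}\}$ consists of $n$ distinct boundary points and that $t \mapsto g_t$ depends continuously on $t$ in the Carath\'eodory sense --- shows that $T^* > 0$. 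The goal is to prove $T^* = \bigT$. Suppose, for contradiction, that $T^* < \bigT$. By the definition of $T^*$ together with Hausdorff continuity of $t \mapsto K_t$, the failure at $T^*$ takes one of two forms: (a) two distinct components merge in $\overline{\bD}$; or (b) some $K^j_{T^*}$ meets $\partial \bD$ at a point different from $e^{\ii \theta^j_0}$.

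To rule out both scenarios, I would apply \Cref{prop:removal-of-hulls-loewner} component by component. Fix $j$, and for each $t \in [0, T^*)$ consider the uniformizing map $h^{\neq j}_t \colon \bD \smallsetminus \bigcup_{k \neq j} K^k_t \to \bD$ normalized at the origin. The restriction property then expresses $h^{\neq j}_t(K^j_t)$ as the hull generated by a \emph{single-radial} Loewner equation with weight $\lambda^{(j)}_t$ and driving function $\eta^{(j)}_t$. The separation $\delta$ ensures that $\lambda^{(j)}$ is uniformly bounded below on $[0, T^*]$, while the finite Dirichlet energy of $\theta^j$ transfers to $\eta^{(j)}$ via the smoothness of $h^{\neq j}_t$ at the boundary point $z^j_t$. \Cref{thm: FS_estimate} then yields that $h^{\neq j}_t(K^j_t)$ is a simple radial chord in $\bD$ touching $\partial \bD$ only at its starting point; pulling back by $(h^{\neq j}_t)^{-1}$, which extends continuously to the relevant boundary arc thanks to the separation, shows that $K^j_t$ is a simple chord inside $\bD \smallsetminus \bigcup_{k \neq j} K^k_t$ touching $\partial \bD$ only at $e^{\ii \theta^j_0}$. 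Passing to $t \to T^{*-}$ rules out both (a) and (b), contradicting the maximality of $T^*$.

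The main obstacle is verifying the input needed for the restriction step: namely, that $\lambda^{(j)}$ is uniformly bounded away from zero on $[0,T^*]$ and that $\nDenergy_{T^*}(\eta^{(j)}) < \infty$. The lower bound on $\lambda^{(j)}$ follows from the separation $\delta$, since the $n-1$ other Loewner terms in \eqref{eqn:multiradial_Loewner_1common} contribute only uniformly bounded perturbations near $K^j_t$ and do not cause the time-parameterization to degenerate. Control of $\nDenergy_{T^*}(\eta^{(j)})$ relies on uniform $C^1$ estimates on $h^{\neq j}_t$ at $z^j_t$, which again follow from the separation $\delta$ via standard distortion theorems for conformal maps onto slit domains. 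These are precisely the estimates packaged into \Cref{prop:removal-of-hulls-loewner}, so the bulk of the work in this proposition is shifted to establishing that result.
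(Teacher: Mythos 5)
Your proposal is circular: it leans on \Cref{prop:removal-of-hulls-loewner} as the key reduction to the single-chord case, but that proposition's very statement fixes the partition $K_t = R^k_t \sqcup K^k_t$ ``where $K^k_t$ is the connected component of $K_t$ containing $e^{\ii\theta^k_0}$ as in \Cref{prop:hulls_avoid_boundary_and_each_other}'' --- precisely the decomposition you are trying to establish. The bootstrap via $T^*$ is an attempt to escape this, but it leaves two genuine gaps. First, the short-time claim $T^* > 0$ is asserted without argument; for a merely absolutely continuous driver there is no a priori reason the initial hull growth has exactly $n$ connected components, and the only tool in the paper that controls this is the very energy argument being proved. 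Second, and more seriously, the ``uniform $C^1$ estimates on $h^{\neq j}_t$ at $z^j_t$'' you invoke do \emph{not} follow from the separation $\delta$ of the driving functions: $\delta$ bounds $|e^{\ii\theta^j_t} - e^{\ii\theta^k_t}|$ in the \emph{conformally straightened} picture, and the maps $g_t$ can distort distances arbitrarily, so nothing prevents the hulls $K^j_t$ and $K^k_t$ from approaching each other inside $\bD$ as $t \to T^{*-}$, which is exactly the degeneration scenario (a) you are trying to rule out. Without a uniform lower bound on the mutual distance of the hulls, the conformal maps $\chi^k_t$ degenerate and the weight $\lambda^{(j)}$ and driver $\eta^{(j)}$ need not stay controlled.

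The paper's proof is structurally quite different and avoids the circularity altogether: it never decomposes $K_t$ into components and does not invoke \Cref{prop:removal-of-hulls-loewner}. Instead it argues directly in the multiradial Loewner flow \eqref{eqn:multiradial_Loewner_general_cylinder} that if any boundary point $x \notin \{e^{\ii\theta^1_0},\ldots,e^{\ii\theta^n_0}\}$ were swallowed before time $\bigT$, then --- tracking $\xi_t = h_t(-\ii\log x)$ and $\omega^j_t = \xi_t - \theta^j_t$, splitting the integrand into a singular single-pole term $V_s$ and a uniformly bounded cross term $Z_s$ (bounded precisely because $\detblowuptime(\btheta) > \bigT$ by \Cref{cor:finite-energy-characterization}) --- the multiradial Dirichlet energy $\nBessel_\bigT(\btheta)$ would be infinite, since $\int_0^{\swallowtime{x}} |V_s|^2 \ud s = \infty$ by the same telescoping computation appearing at the end of the proof of \Cref{thm: FS_estimate}. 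Once no intermediate boundary point is swallowed, the disjoint decomposition of $K_t$ is obtained by a purely topological separation: one picks $n$ simple arcs in $\overline\bD\smallsetminus K_t$ from the origin to intermediate boundary points $e^{\ii u^j}$ with $u^j\in(\theta^j_0,\theta^{j+1}_0)$, which cut $K_t$ into $n$ disjoint connected hulls. You should replace the reduction to \Cref{prop:removal-of-hulls-loewner} with this direct energy blow-up argument; \Cref{prop:removal-of-hulls-loewner} is the \emph{next} step (used to prove \Cref{thm:finite-energy-gives-simple-curves}), and it genuinely needs the present proposition as input.
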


\begin{figure}
\centering
\includegraphics[width=0.4\textwidth]{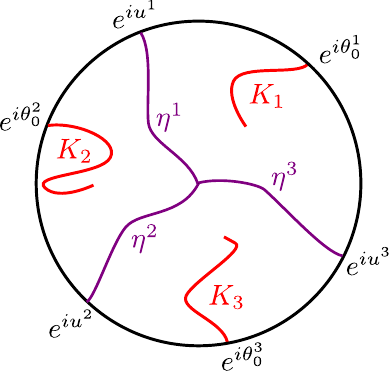}
\caption{\label{fig:spider} 
Illustration of a finite-energy hull and paths separating its components, as in the proof of \Cref{prop:hulls_avoid_boundary_and_each_other}. 
}
\end{figure}

\begin{proof}
We will first prove that $K_t \cap \partial \bD = \{e^{\ii \theta^1_0}, \ldots, e^{\ii \theta^n_0}\}$ for all $t \in [0,\bigT]$. 
As the first step, we show that 
none of the boundary points $x \in \partial \bD \smallsetminus \{e^{\ii \theta^1_0}, \ldots, e^{\ii \theta^n_0}\}$ can be swallowed when the energy is finite. Consider the swallowing times
\begin{align*}
\swallowtime{x} := \min_{1 \le j \le n} \swallowtime{x}^j
 \qquad  \textnormal{where}  \qquad  
\swallowtime{x}^j := \; & \sup \Big\{t \geq 0 \; \colon \;  \inf_{s \in [0,t]} \, \big| g_s(x) - e^{\ii \theta_s^j} \big| > 0 \Big\} .
\end{align*}
Towards a contradiction, suppose that $\swallowtime{x} \leq T$.
On the one hand, 
\Cref{cor:finite-energy-characterization} shows that $T < \detblowuptime$. 
On the other hand, 
if $\swallowtime{x}^j = \swallowtime{x}^i$ for some $i \ne j$, then
\begin{align*}
|e^{\ii\theta_s^j}-e^{\ii\theta_s^i}| 
\; \leq \; |e^{\ii \theta_s^j}-g_s(x)| + | g_s(x) - e^{\ii \theta_s^j} \big| 
\quad
\xrightarrow{s \to  \swallowtime{x}^j-} \quad 0,
\end{align*}
which shows that $\detblowuptime \le \swallowtime{x}^j$. 
Hence, we may without loss of generality assume that $\{j_0\} := \argmin_j \swallowtime{x}^j = \{1\}$, so that
$\swallowtime{x} = \swallowtime{x}^{j_0} = \swallowtime{x}^{1} \leq T$.
Consider the time evolution 
\begin{align*}
g_t(e^{\ii u}) = \exp( \ii h_t(u) ) 
\qquad \textnormal{and} \qquad 
\xi_t := h_t(-\ii \log x) \in (0,2\pi) , \qquad  t < \swallowtime{x} ,
\end{align*}
and denote
\begin{align*}
\omega^j_t := \xi_t - \theta^j_t , \quad 1 \leq j \leq n ,
\quad \textnormal{so that} \qquad
\omega^1_{\swallowtime{x}} \in \{0,2\pi\} .
\end{align*}
From~\eqref{eqn:multiradial_Loewner_general_cylinder} (with $\lambda_t \equiv 1$) we see that
\begin{align*}
\tfrac{\ud}{\ud t} \xi_t = \sum_{j=1}^n \cot \bigg( \frac{\omega^j_t}{2} \bigg) , \qquad t < \swallowtime{x} .
\end{align*}
We will now estimate the multiradial Dirichlet energy of $\btheta$ under the assumption that $\swallowtime{x} = \swallowtime{x}^{1} \leq T$, which will lead to a contradiction with the finiteness of the energy: 
\begin{align} 
\nonumber
\infty \, > \, 
2 \nBessel_{\bigT}(\btheta) 
\, \geq 
2 \nBessel_{\swallowtime{x}}(\btheta) 
= \; & \int_0^{\swallowtime{x}} \sum_{j=1}^n \big| \tfrac{\ud}{\ud s} \theta^j_s - 2 \phi^j(\btheta_s) \big|^2 \ud s 
\; \geq \; \int_0^{\swallowtime{x}} \big| \tfrac{\ud}{\ud s} \theta^1_s - 2 \phi^1(\btheta_s)\big|^2 \ud s \\
= \; & \int_0^{\swallowtime{x}} |V_s-Z_s|^2 \ud s ,
\label{eqn:inf_energy}
\\
\textnormal{where} \qquad 
V_s := \; & \tfrac{\ud}{\ud s} \omega^1_s -\cot\bigg(\frac{\omega^1_s}{2} \bigg) , 
\nonumber 
\\
Z_s := \; & \sum_{j=2}^n \bigg( \cot \bigg(\frac{\omega^j_s}{2} \bigg) 
+ 2 \cot \bigg(\frac{\omega^1_s-\omega^j_s}{2} \bigg)\bigg) .
\nonumber 
\end{align}
We will show that the righthand side of~\eqref{eqn:inf_energy} is infinite, which gives a contradiction. 
First, as $\detblowuptime > \swallowtime{x}$, 
there exists a constant $R \in (0,\infty)$ such that $|Z_s| \leq R$ for all $s \leq \swallowtime{x}$, so 
\begin{align*}
\int_0^{\swallowtime{x}} |Z_s|^2 \ud s 
\; \leq \; R \, \swallowtime{x} .
\end{align*}
Second, the same computation as in the end of the proof of \Cref{thm: FS_estimate} shows that
\begin{align*}
\liminf_{t \to \swallowtime{x}-} \int_0^{t} |V_s|^2  \ud s \;
 \ge
 \; & - 4 \,  \liminf_{t \to \swallowtime{x}-}
\log \bigg( \frac{\sin \big( \frac{\omega_t^1}{2} \big)}{\sin \big( \frac{\omega_0^1}{2} \big)} \bigg)
\; = \; +\infty ,
\end{align*}
since $\omega^1_{\swallowtime{x}} \in \{0,2\pi\}$.
We conclude that 
\begin{align*}
\infty \, > \, 
\textnormal{\eqref{eqn:inf_energy}}
\; = \; & \int_0^{\swallowtime{x}} |V_s-Z_s|^2 \ud s \\
\geq \; & \underbrace{\liminf_{t \to \swallowtime{x}-} \int_0^{t} |V_s|^2 \ud s}_{= \; + \infty}
\; - \; \underbrace{\int_0^{\swallowtime{x}} |Z_s|^2 \ud s}_{\leq \; R \, \swallowtime{x} \; \in \; [0,\infty)}
\; - \; \underbrace{2 \int_0^{\swallowtime{x}} |V_s-Z_s||Z_s| \ud s}_
{\leq \; 2 \, \sqrt{ R \swallowtime{x}} 
\big( \int_0^{\swallowtime{x}} |V_s-Z_s|^2 \ud s \big)^{1/2}}
\; = \; \infty ,
\end{align*}
since the third term is finite by the Cauchy-Schwarz inequality and~\eqref{eqn:inf_energy}:
\begin{align*}
0\leq \;
2 \int_0^{\swallowtime{x}} |V_s-Z_s||Z_s| \ud s
\leq
\Big| 2 \, \sqrt{R \swallowtime{x}} \Big( \int_0^{\swallowtime{x}} |V_s-Z_s|^2 \ud s \Big)^{1/2}  \Big|
\; \leq \; 2 \, \sqrt{R \swallowtime{x}} \sqrt{2 \, \nBessel_{\swallowtime{x}}(\btheta)} \; < \infty.
\end{align*}

This gives the sought contradiction --- so we conclude that for all $x \in \partial \bD \smallsetminus \{e^{\ii \theta^1_0}, \ldots, e^{\ii \theta^n_0}\}$, 
we have $\swallowtime{x} > T$.
We have thus shown that $K_t \cap \partial \bD = \{e^{\ii \theta^1_0}, \ldots, e^{\ii \theta^n_0}\}$ for all $t \in [0,\bigT]$.

To finish, we will prove that $K_t = \bigsqcup_{j=1}^n K^j_t$ is a union of pairwise disjoint connected hulls $K^j_t$.  
Indeed, pick $n$ boundary points $e^{\ii u^1}, e^{\ii u^2}, \ldots, e^{\ii u^n} \in \partial \bD$ such that 
$u^j \in (\theta^j_0, \theta^{j+1}_0)$ for each $j$, with the convention that $\theta^{n+1}_0 = \theta^{1}_0+2\pi$. 
Pick $n$ simple paths $\eta^1, \eta^2, \ldots, \eta^n$ in $\overline{\bD} \smallsetminus K_t$ such that each $\eta^j$ connects the origin to $e^{\ii u^j}$ in $\bD$ 
and $\eta^1, \eta^2, \ldots, \eta^n$ only intersect at the origin --- 
see Figure~\ref{fig:spider} for an illustration.  
Then, $\displaystyle{\cup_j} \eta^j$ separates $K_t$ into $n$ disjoint connected hulls $K_t^j$ satisfying $K_t^j \cap \partial \bD = \{e^{\ii\theta^j_0}\}$, for $1 \leq j \leq n$.
\end{proof}

We will now prove a generalized version of the restriction property (\Cref{prop:removal-of-hulls-loewner}). 
In the $1$-common parameterization, the uniformizing Loewner maps $g_t \colon \bD \smallsetminus K_t \to \bD$ satisfy~\eqref{eqn:multiradial_Loewner_1common}
with $z^j_t=e^{\ii \theta^j_t}$, and the map $h_t$ related to $g_t$ via $g_t(e^{\ii u}) = \exp( \ii h_t(u) )$ 
satisfies
\begin{align} \label{eq: LE for h}
\partial_t h_t(u) = \sum_{j=1}^n \cot \bigg(\frac{h_t(u)-\theta^j_t}{2}\bigg) .
\end{align}

See Figure~\ref{fig:conf_rest} 
for an illustration of the setup of~\Cref{prop:removal-of-hulls-loewner}.

\begin{figure}
\centering
\includegraphics[width=0.6\textwidth]{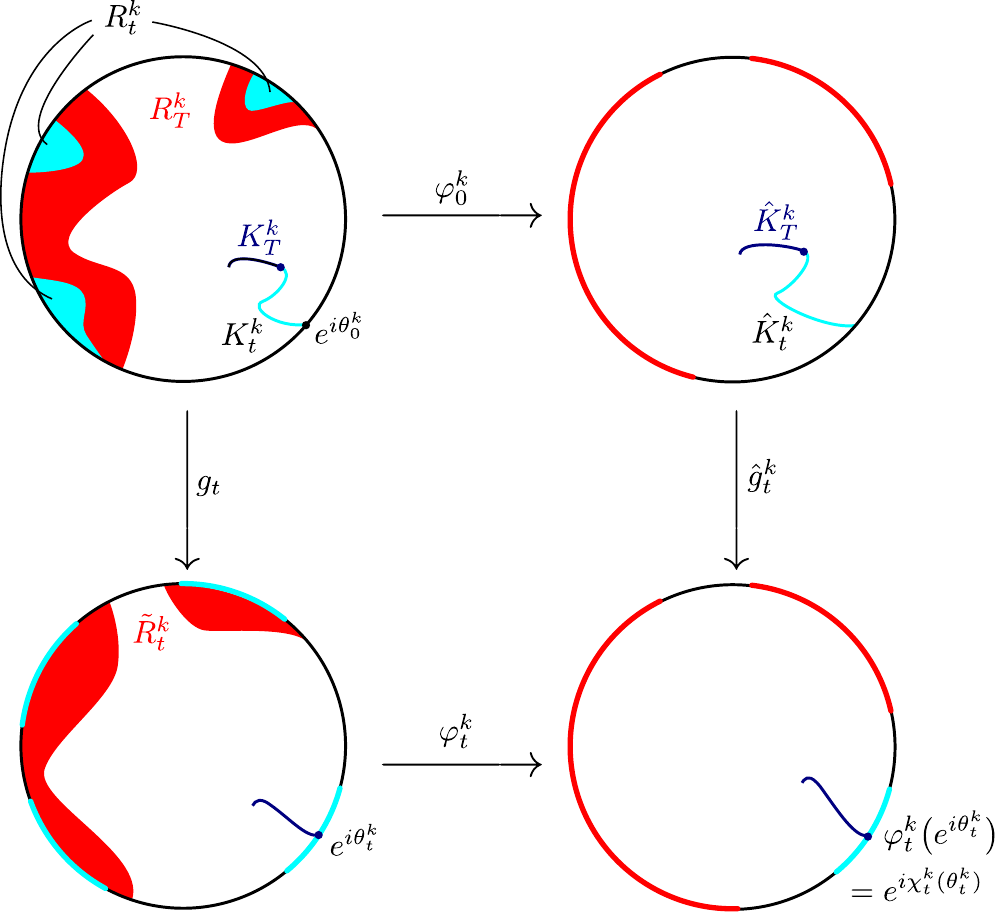}
\caption{\label{fig:conf_rest} 
Illustration of the setup of~\Cref{prop:removal-of-hulls-loewner}. }
\end{figure}

\begin{prop} \label{prop:removal-of-hulls-loewner}
Consider a multiradial Loewner chain with the 
$1$-common parameterization for which the uniformizing conformal maps $g_t \colon \bD \smallsetminus K_t \to \bD$ satisfy  Equation~\eqref{eqn:multiradial_Loewner_1common}
with driving functions $z^j_t = e^{\ii \theta^j_t}$ for $1 \leq j \leq n$, 
where $\btheta = (\theta^1, \ldots, \theta^n) 
\in C_{\btheta_0}([0,\bigT], \chamber)$.
Suppose that the multiradial Dirichlet energy of $\btheta$ is finite, i.e.,  
$\nBessel_\bigT(\btheta) < \infty$.

Fix $k \in \{1, \ldots, n\}$ and consider the partition $K_t = R_t^k \sqcup K^k_t$, where $K^k_t$ is the connected component of $K_t$ containing $e^{\ii \theta^k_0}$ as in~\Cref{prop:hulls_avoid_boundary_and_each_other},
and $R_t^k = K_t \smallsetminus K^k_t$ its complement. 
Define the following quantities, as shown in Figure~\ref{fig:conf_rest}: 
\begin{itemize}
\item 
Write $\tilde{R}^k_t := g_t(R^k_\bigT \smallsetminus R^k_t)$ for $t \in [0,\bigT]$. 

\item 
Let $\Mob^k_t \colon \bD \smallsetminus \tilde{R}^k_t \to \bD$ denote the 
uniformizing map normalized at the origin.

\item 
Write $\hat{K}^k_t = \Mob^k_0(K^k_t)$. 

\item 
Let $\hat{g}^k_t \colon \bD \smallsetminus \hat{K}^k_t \to \bD$ denote the 
uniformizing map normalized at the origin.

\item 
Define $\hat h ^k_t \in [0,2\pi)$ by
$\hat{g}^k_t(e^{\ii u}) = \exp( \ii \hat{h}^k_t(u) )$ for $u\in \bH$ such that $e^{\ii u} \in  \bD \smallsetminus \hat{K}^k_t$.

\item 
Define $\chi^k_t \in [0,2\pi)$ by
$\Mob^k_t(e^{\ii u}) = \exp( \ii \chi^k_t(u) )$  for $u\in \bH$ such that $e^{\ii u} \in  \bD \smallsetminus \tilde{R}^k_t $. 
\end{itemize} 
Then, we have
\begin{align} 
\label{eq: h hat Loewner equation}
\partial_t \hat{h}^k_t(u) 
= \; & \lambda_t^k \, 
\cot \bigg(\frac{\hat{h}^k_t(u) - \hat{\theta}_t^k}{2}\bigg) , 
\qquad t \in [0,\swallowtimehat{u})  , 
\end{align}
where $t \mapsto \lambda_t^k := \big( (\chi^k_t)'(\theta^k_t) \big)^2$ is a continuous weight function, 
$t \mapsto \hat{\theta}_t^k := \chi^k_t(\theta^k_t)$, is the driving function, and 
\begin{align*}
\swallowtimehat{u} 
:= \; & \sup \Big\{t \geq 0 \; \colon \; \inf_{s \in [0,t]} \, \big| \hat{g}^k_s(e^{\ii u}) - e^{\ii \hat{\theta}_s} \big| > 0 \Big\} .
\end{align*}
Furthermore, we have $\nDenergy_{\bigT}(\lambda^k) < \infty$ and 
$\nDenergy_{\bigT}(\hat\theta^k) < \infty$. 
\end{prop}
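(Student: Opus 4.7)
The plan has four steps: (i) establish the conformal-factorization identity $\hat g^k_t \circ \varphi^k_0 = \varphi^k_t \circ g_t$ by uniqueness of the uniformizing map at the origin, equivalently $\hat h^k_t \circ \chi^k_0 = \chi^k_t \circ h_t$; (ii) recognize $\hat K^k_t = \varphi^k_0(K^k_t)$ as a single-curve growing hull in $\bD$, yielding a single radial Loewner equation of the form~\eqref{eq: h hat Loewner equation} for $\hat g^k_t$; (iii) identify $\hat\theta^k_t$ and $\lambda^k_t$ by matching singularities in the identity from~(i) differentiated in $t$; (iv) verify the finite-energy claims.

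For (i): using $K_t \cup (R^k_\bigT \setminus R^k_t) = R^k_\bigT \cup K^k_t$, both $\hat g^k_t \circ \varphi^k_0$ and $\varphi^k_t \circ g_t$ are conformal maps $\bD \setminus (R^k_\bigT \cup K^k_t) \to \bD$ sending $0 \mapsto 0$ with positive derivative, so they coincide. For (ii): by \Cref{prop:hulls_avoid_boundary_and_each_other} (applicable since $\nBessel_\bigT(\btheta) < \infty$), $K^k_t$ is disjoint from $R^k_\bigT$ in $\overline\bD$ and grows locally at its tip $\gamma^k_t$, so $\hat K^k_t$ is a locally growing hull in $\bD$ starting from $\emptyset$. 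Standard radial Loewner theory then yields~\eqref{eq: h hat Loewner equation} with some continuous $\lambda^k_t$ and $\hat\theta^k_t$.

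For (iii), differentiating $\hat h^k_t \circ \chi^k_0 = \chi^k_t \circ h_t$ in $t$ gives
\begin{align*}
\partial_t \hat h^k_t(\chi^k_0(u)) = \partial_t \chi^k_t(h_t(u)) + (\chi^k_t)'(h_t(u)) \sum_{j=1}^n \cot\!\left(\frac{h_t(u) - \theta^j_t}{2}\right).
\end{align*}
By~\eqref{eq: h hat Loewner equation}, the LHS equals $\lambda^k_t \cot((\chi^k_t(h_t(u)) - \hat\theta^k_t)/2)$ and has a pole only at $\chi^k_t(h_t(u)) = \hat\theta^k_t$; the RHS has a pole at $h_t(u) = \theta^k_t$ from the $j=k$ summand (the $j\ne k$ singularities cancel internally against $\partial_t\chi^k_t$ for consistency with the LHS), so matching locations forces $\hat\theta^k_t = \chi^k_t(\theta^k_t)$. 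Matching residues via $\cot(x/2) \sim 2/x$ and $\chi^k_t(v) - \chi^k_t(\theta^k_t) \sim (\chi^k_t)'(\theta^k_t)(v-\theta^k_t)$ yields $2\lambda^k_t/(\chi^k_t)'(\theta^k_t) = 2(\chi^k_t)'(\theta^k_t)$, so $\lambda^k_t = ((\chi^k_t)'(\theta^k_t))^2$.

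For (iv), \Cref{cor:finite-energy-characterization} gives $\detblowuptime > \bigT$ and each $\theta^j \in H^1([0,\bigT])$, so the angles $\theta^j_t$ remain pairwise well separated on $[0,\bigT]$; consequently $e^{\ii\theta^k_t}$ stays uniformly at positive distance from $\overline{\tilde R^k_t}$, so $\varphi^k_t$ extends analytically across $\partial\bD$ near $e^{\ii\theta^k_t}$ with $(\chi^k_t)'(\theta^k_t)$ bounded above and below uniformly in $t$. Subtracting the matched singular parts from the differentiated identity at $v = h_t(u) \to \theta^k_t$ leaves a finite expression for $\partial_t\chi^k_t(\theta^k_t)$ in terms of bounded quantities (the separated $\theta^j_t$, $j \ne k$, and corresponding boundary derivatives of $\chi^k_t$); combined with $\partial_t\theta^k_t \in L^2([0,\bigT])$, the chain-rule identities $\partial_t \hat\theta^k_t = \partial_t\chi^k_t(\theta^k_t) + (\chi^k_t)'(\theta^k_t)\partial_t\theta^k_t$ and $\partial_t\lambda^k_t = 2(\chi^k_t)'(\theta^k_t)\,\partial_t[(\chi^k_t)'(\theta^k_t)]$ give $\nDenergy_\bigT(\hat\theta^k), \nDenergy_\bigT(\lambda^k) < \infty$. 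The main obstacle is this regularity step: quantitative control of $\partial_t \chi^k_t$ and $\partial_t[(\chi^k_t)'(\theta^k_t)]$ requires analyzing how $\tilde R^k_t$ deforms (its boundary moves by the images of the other curves' future growth) and transferring this through the uniformizing map $\varphi^k_t$, with constants depending on $\min_{i \ne j}|\theta^i_t - \theta^j_t|$ and $\dist(e^{\ii\theta^k_t}, \overline{\tilde R^k_t})$.
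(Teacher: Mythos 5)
Your steps (i)--(iii) match the paper's argument essentially line for line: establish the factorization $\hat{h}^k_t = \chi^k_t \circ h_t \circ (\chi^k_0)^{-1}$ by uniqueness of the normalized uniformizing map, observe that the hulls $\hat{K}^k_t$ grow locally so that standard theory yields a weighted single-curve radial Loewner equation for $\hat{g}^k_t$, and then identify $\hat{\theta}^k_t = \chi^k_t(\theta^k_t)$ and $\lambda^k_t = ((\chi^k_t)'(\theta^k_t))^2$ by differentiating in time and matching the Laurent singularity at $v=\theta^k_t$.

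The genuine gap --- which you yourself flag as ``the main obstacle'' --- is in step (iv). You invoke uniform-in-$t$ boundedness of $(\chi^k_t)'(\theta^k_t)$ and $\partial_t\chi^k_t(\theta^k_t)$, and (implicitly, through $\partial_t\lambda^k_t = 2(\chi^k_t)'(\theta^k_t)\,\partial_t[(\chi^k_t)'(\theta^k_t)]$) also of $\partial_t(\chi^k_t)'(\theta^k_t)$ and $(\chi^k_t)''(\theta^k_t)$, but you never establish it. Pointwise Schwarz reflection at each fixed $t$ is not enough; one needs \emph{joint} continuity in $(v,t)$ of all four of these quantities on a compact neighborhood of the graph $\{(\theta^k_t,t): t\in[0,T]\}$ before the extreme value theorem can be applied. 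The paper closes this by (a) reflecting $\chi^k_t$ to a full $\bC$-neighborhood $\bignbhd$ of $\theta^k_t$ (possible by Proposition~\ref{prop:hulls_avoid_boundary_and_each_other}, which keeps the other components at positive distance from $e^{\ii\theta^k_t}$), (b) verifying that $(v,s)\mapsto\partial_s\chi^k_s(v)$ is jointly continuous on $\overline{\bignbhd}\times[t-\epsilon,t+\epsilon]$, and (c) propagating this to all higher spatial derivatives via the Cauchy integral representation together with dominated convergence, which gives $\partial_t \chi^{(m)}_t = (\tfrac{\ud}{\ud z})^m\partial_t\chi_t$ and hence joint continuity of $\partial_t(\chi^k_t)'(v)$ and $(\chi^k_t)''(v)$. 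Step (c) is the concrete tool your sketch is missing; once it is in place, the four quantities above are bounded on $[0,T]$ by compactness, and then $\partial_t\hat{\theta}^k,\ \partial_t\lambda^k \in L^2([0,T])$ follow from the chain rule together with $\partial_t\theta^k\in L^2([0,T])$.
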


\begin{proof}
See Figure~\ref{fig:conf_rest} for the setup. 
Note that $\hat{g}^k_t = \Mob^k_t \circ g_t \circ (\Mob^k_0)^{-1}$, 
since all uniformizing maps are normalized at the origin. 
Similarly, $\hat{h}^k_t = \chi^k_t \circ h_t \circ (\chi^k_0)^{-1}$.

We see that the hulls $(\hat K^k_{t})_{t \ge 0}$ are locally growing, since the hulls $(K^k_{t})_{t \ge 0}$ are locally growing and contained in the domain of $\Mob^k_0$, which is a homeomorphism that extends continuously to the boundary.
This implies that $\hat g^k_t$ satisfies the weighted single-curve radial Loewner equation for some weight $\lambda_t^k$, with driving function $e^{\ii {\hat \theta^k_t}}=\Mob^k_t(e^{\ii \theta^k_t})$.  Consequently,
$\hat h^k_t$ satisfies an equation of the form~\eqref{eq: h hat Loewner equation} with $\hat\theta^k_t = \chi^k_t(\theta_t)$, again with some weight $\lambda_t^k$.

In order to find $\lambda^k_t$, we compute the time derivative of $\hat h_t (u)=  \chi^k_t \circ h_t \circ (\chi^k_0)^{-1} (u)$ using the chain rule, 
substituting into~\eqref{eq: LE for h}, 
and then setting the result equal to the righthand side of~\eqref{eq: h hat Loewner equation},
which shows that 
\begin{align}\label{eqn:hat-comparison}
\lambda_t^k \, \cot \bigg(\frac{\chi^k_t(v) - \chi^k_t(\theta^k_t)}{2}\bigg)
= \; & (\partial_t \chi^k_t)(v)
+ (\chi^k_t)'(v) \; 
\sum_{j=1}^n \cot \bigg(\frac{v - \theta^j_t}{2}\bigg),
\end{align}
where $v  = (h_t \circ (\chi^k_0)^{-1}) (u)$ for $e^{\ii u} \in \bD\smallsetminus\hat K^k_t$. 
The above equation~\eqref{eqn:hat-comparison} holds whenever $e^{\ii v} \in \bD\smallsetminus\tilde R^k_t$.
We will solve for $\lambda^k_t$ and evaluate the limit as $v\to\startheta:=\theta^k_t$. 
For this purpose, we define the notation $\startheta$ to be clear that this value is fixed, even as we consider $\chi^k_s$ for $s\in [t-\epsilon, t+\epsilon]$. 
However, due to the singularity at $\startheta$, we first take care to check that all relevant maps are jointly continuous in a neighborhood of $(t, \startheta)$.

The conformal mapping $\chi^k_t$ is well-defined and extends continuously to the boundary 
in an $\bH$-neighborhood around $\startheta$, so 
by Schwarz reflection, $\chi^k_t$ extends conformally to a neighborhood of $\startheta$ in the complex plane. 
Moreover, the conformal maps $\chi^k_s$ are continuously differentiable in $s$, and we can find $\epsilon>0$ and $\bH$-neighborhood $\nbhd$ around $ \startheta$ 
such that the map $(v, s) \mapsto \partial_s \chi^k_s(v)$ exists and is jointly continuous on $\overline \nbhd \times [t-\epsilon, t+\epsilon]$.
Again extending by Schwarz reflection, there exists a $\bC$-neighborhood $\bignbhd \ni \startheta$ such that for $s\in [t-\epsilon, t+\epsilon]$, each $\partial_s \chi^k_s$ is conformal on $\bignbhd$, the map $(v,s)\mapsto  \partial_s \chi^k_s(v)$ is jointly continuous on $\overline \bignbhd \times [t-\epsilon, t+\epsilon]$. 
Without loss of generality, we may assume that $\bignbhd$ is simply connected with rectifiable boundary. 
Finally, for each $m \in \bN$, the map\footnote{Here, we use the superscript ``$(m)$'' to denote the $m$:th complex derivative.}
$(z, t) \mapsto \partial_t \chi^{(m)}_t$ is jointly continuous on $\overline \bignbhd \times [t-\epsilon, t+\epsilon]$, 
which we can see by applying the Cauchy differentiation formula, 
\begin{align*}
\frac{\chi^{(m)}_{t+s}(z)
-\chi^{(m)}_{t}(z)}{s} =\;& \frac{m!}{2\pi \ii}\int_{\partial U} \frac{\chi_{t+s}(w)
-\chi_t(w)}{s}\frac{\ud w}{(w-z)^m}, \qquad \textnormal{for all } z 
\in \bignbhd,
\end{align*}
and the dominated convergence theorem, which yield 
\begin{align*}
\partial_t \chi^{(m)}_t(z) = \frac{m!}{2\pi \ii}\int_{\partial U}\frac{\partial_t \chi_t(w)}{(w-z)^m}\ud w = (\tfrac{\ud}{\ud z})^m\partial_t \chi_t(z).
\end{align*}

Next, we Laurent expand both sides of~\eqref{eqn:hat-comparison} 
around the singularity $\startheta$, to obtain
\begin{align*}
\frac{2\lambda_t^k}{\chi^k_t(v) - \chi^k_t(\theta^k_t)} + O(1) 
= \; & (\partial_t \chi^k_t)(v)
+ (\chi^k_t)'(v) \bigg(\frac{2}{v - \theta^k_t} + O(1) \bigg) , \qquad v \to \startheta .
\end{align*}
We can solve for $\lambda_t^k$ by multiplying both sides by $\tfrac{1}{2}(\chi^k_t(v) - \chi^k_t(\theta^k_t))$ and taking the limit as $v \to \startheta$ (which is justified by the continuity checks above):
\begin{align*}
\lambda_t^k 
= \lim_{v \to \theta^k_t} \bigg( (\chi^k_t)'(v) \; \frac{\chi^k_t(v) - \chi^k_t(\theta^k_t)}{v - \theta^k_t} \bigg)
= \big( (\chi^k_t)'(\theta^k_t) \big)^2 \; \in \; (0,\infty) .
\end{align*}
Let us also note that $(\chi^k_t(\startheta))'\neq 0$, 
since (by Schwarz reflection, as above) $\chi^k_t$ is conformal in a neighborhood of $\startheta$. 
Finally, the joint continuity of $(s,v)\mapsto(\chi^k_s)'(v)$ implies that $\lambda^k_t$ is continuous and therefore locally integrable. 
We thus conclude that $\lambda^k_t=\big( (\chi^k_t)'(\theta^k_t) \big)^2$ is a continuous weight function.
This proves the asserted equality~\eqref{eq: h hat Loewner equation}.

Finally, we check that $\nDenergy_{\bigT}(\hat\theta^k_t) < \infty$ and $\nDenergy_{\bigT}(\lambda^k_t) < \infty$: differentiating, we have 
\begin{align*}
\tfrac{\ud}{\ud t}\hat\theta^k_t 
= \; & \tfrac{\ud}{\ud t}\chi^k_t(\theta^k_t) = \partial_t \chi^k_t(\theta^k_t) + \big((\chi^k_t)'(\theta^k_t)\big)\tfrac{\ud}{\ud t}\theta^k_t , 
\\
\tfrac{\ud}{\ud t}\lambda^k_t 
= \; & \tfrac{\ud}{\ud t} \big((\chi^k_t)'(\theta^k_t)\big)^2 = 2\big(\partial_t(\chi^k_t)'(\theta^k_t) + \big((\chi^k_t)''(\theta^k_t)\big)\tfrac{\ud}{\ud t}\theta^k_t\big).
\end{align*}
By joint continuity, all of 
$\partial_t\chi^k_t(\theta^k_t)$, 
and $(\chi^k_t)'(\theta^k_t)$, 
and $\partial_t(\chi^k_t)'(\theta^k_t)$, 
and $(\chi^k_t)''(\theta^k_t)$ 
are uniformly bounded on the compact interval $[0,T]$, 
while by \Cref{cor:finite-energy-characterization}, 
the function $\theta^k_t$ has finite Dirichlet energy 
and so $t \mapsto \tfrac{\ud}{\ud t}\theta^k_t$ is integrable on $[0,\bigT]$ (as $\theta^k$ is absolutely continuous). 
Thus, we see that 
$\nDenergy_{\bigT}(\hat\theta^k_t) < \infty$ and $\nDenergy_{\bigT}(\lambda^k_t) < \infty$, proving the last claim.
\end{proof}

\begin{proof}[Proof of \Cref{thm:finite-energy-gives-simple-curves}]
The case of $n = 1$ is covered by \Cref{thm: FS_estimate}, 
so we will consider the case where $n > 1$. 
By \Cref{prop:hulls_avoid_boundary_and_each_other}, the hull $K_t = \bigsqcup_{j=1}^{n} K^j_t$ is a disjoint union of $n$ connected components $K^j_t$ containing the starting points $\theta^j_0$, with $1 \leq j \leq n$. 
It thus suffices to show that each such connected component is generated by a simple curve.

Fix $k \in \{1, \ldots, n\}$.
By conjugating by a suitable rotation, we may assume without loss of generality that $\theta^k_0 = 0$. 
With notation from~\Cref{prop:removal-of-hulls-loewner}, the map 
$\hat{h}^k_t$ satisfies the 
(single) radial Loewner  equation~\eqref{eqn:multiradial_Loewner_general_cylinder} (with $n=1$) 
parameterized by the continuous weight function 
$\lambda^k$ and with driving function $\hat \theta^k$, 
which has finite Dirichlet energy $\nDenergy_{\bigT}(\hat \theta^k) < \infty$. 
Thus, by \Cref{thm: FS_estimate} we know that $\hat K^k$ is a simple radial chord, so $K^k_\bigT = (\Mob^k_0)^{-1}(\hat{K}^k_\bigT)$ is also a simple 
curve (as a conformal image of such).  
As the choice of the index $k \in \{1, \ldots, n\}$ was arbitrary, 
we conclude that every connected component of $K_\bigT$ is a simple curve.
\end{proof}

\subsection{Proof of the LDP for multiradial $\SLE_{0+}$}
\label{subsec: LDP for SLE}

In this section, we prove the main result, \Cref{thm:radial_LDP_finite_time}.
Let us begin by recalling that the Loewner transform $\cL_t$, defined in~\eqref{eq: Loewner transform}, sends driving functions to the hulls generated by the multiradial Loewner equation~\eqref{eqn:multiradial_Loewner_1common} with $1$-common parameterization.
Hence, it would be natural to apply the contraction principle, (\Cref{thm:contraction-principle}) 
to deduce the LDP for multiradial $\SLE_\kappa$ from the LDP for Dyson Brownian motion (\Cref{thm:Bessel_LDP}).
However, as the Loewner transform is not continuous, we cannot do this directly. 
Instead, we first restrict $\cL_t$ into a subset with full measure where it is continuous, 
and use \Cref{prop:restricted-LDP} below, which will allow us to derive the large deviations result for multiradial $\SLE_{0+}$.

\begin{thmA}
[Contraction principle,~{\cite[Theorem~4.2.1]{Dembo-Zeitouni:Large_deviations_techniques_and_applications}}]
\label{thm:contraction-principle}
Let $\spaceX$ and $\spaceY$ be Hausdorff topological spaces, and let $f \colon \spaceX \to \spaceY$ be a continuous map. 
Suppose that the family $(\PrDyson)_{\kappa>0}$ of probability measures satisfies an LDP in $\spaceX$
with good rate function $\lenergy \colon \spaceX \to [0,+\infty]$, that is,
for any closed subset $\closed_0$ and open subset $\open_0$ of $\spaceX$, we have
\begin{align*} 
\limsup_{\kappa \to 0+} \kappa \log \PrDyson \big[ \closed_0 \big] 
\, \leq \, - \inf_{x \in \closed_0} \lenergy(x) 
\qquad \textnormal{and} \qquad
\liminf_{\kappa \to 0+} \kappa \log \PrDyson \big[ \open_0 \big] 
\, \geq \, - \inf_{x \in \open_0} \lenergy(x) ,
\end{align*}
and the level set $\lenergy^{-1}[0,c]$ is a compact subset of $\spaceX$, for all $c \in [0, \infty)$.
Define
\begin{align*}
\nBessel(y) := \inf_{x \in f^{-1}\{y\}} \lenergy(x) , \qquad y \in \spaceY .
\end{align*}
Then, the family $
(\PrPush^\kappa)_{\kappa>0}
:= (\PrDyson \circ f^{-1})_{\kappa>0}$ of pushforward probability measures satisfies an LDP in $\spaceY$ with good rate function $\nBessel$\textnormal{:}
for any closed subset $\closed$ and open subset $\open$ of $\spaceY$, 
\begin{align*}
\limsup_{\kappa \to 0+} \kappa \log \PrPush^\kappa [ \closed ] 
\, \leq \, - \inf_{y \in \closed} \nBessel(y) 
\qquad \textnormal{and} \qquad
\liminf_{\kappa \to 0+} \kappa \log \PrPush^\kappa [ \open ] 
\, \geq \, - \inf_{y \in \open} \nBessel(y) ,
\end{align*}
and the level set $\nBessel^{-1}[0,c]$ is a compact subset of $\spaceY$, for all $c \in [0, \infty)$.
\end{thmA}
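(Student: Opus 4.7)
The plan is to verify the three required ingredients: (i) $\nBessel$ is a good rate function on $\spaceY$; (ii) the upper bound on closed sets; (iii) the lower bound on open sets. All three will reduce to the continuity of $f$ combined with the set-theoretic identity
\begin{align*}
\inf_{x \in f^{-1}(A)} \lenergy(x) = \inf_{y \in A} \nBessel(y) , \qquad A \subseteq \spaceY ,
\end{align*}
which is immediate from the definition $\nBessel(y) = \inf\{\lenergy(x) \; \colon \; f(x) = y\}$ after partitioning $f^{-1}(A) = \bigcup_{y \in A} f^{-1}\{y\}$.

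First, I would show that $\nBessel^{-1}[0,c] = f(\lenergy^{-1}[0,c])$ for each $c \in [0,\infty)$. The inclusion ``$\supseteq$'' is clear from the definition of $\nBessel$. For ``$\subseteq$'', fix $y$ with $\nBessel(y) \leq c$ and choose a sequence $(x_m)_{m \in \bN}$ in $f^{-1}\{y\}$ with $\lenergy(x_m) \to \nBessel(y)$. Then eventually $x_m \in \lenergy^{-1}[0,c+1]$, which is compact, so a subsequence converges to some $x^* \in \spaceX$. Continuity of $f$ gives $f(x^*) = y$, and lower semicontinuity of $\lenergy$ yields $\lenergy(x^*) \leq \nBessel(y) \leq c$, so $y \in f(\lenergy^{-1}[0,c])$. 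Consequently $\nBessel^{-1}[0,c]$ is a continuous image of a compact set, hence compact in $\spaceY$ (which is Hausdorff, so compact subsets are closed). This also gives lower semicontinuity of $\nBessel$, so $\nBessel$ is a good rate function.

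Next, for the upper bound, fix a closed $\closed \subseteq \spaceY$. Continuity of $f$ shows that $f^{-1}(\closed) \subseteq \spaceX$ is closed, and since $\PrPush^\kappa[\closed] = \Pr^\kappa[f^{-1}(\closed)]$, the hypothesized LDP on $\spaceX$ gives
\begin{align*}
\limsup_{\kappa \to 0+} \kappa \log \PrPush^\kappa [\closed]
\; \leq \; - \inf_{x \in f^{-1}(\closed)} \lenergy(x)
\; = \; - \inf_{y \in \closed} \nBessel(y) .
\end{align*}
For the lower bound, fix an open $\open \subseteq \spaceY$. Continuity of $f$ shows that $f^{-1}(\open)$ is open, and since $\PrPush^\kappa[\open] = \Pr^\kappa[f^{-1}(\open)]$, the LDP on $\spaceX$ gives
\begin{align*}
\liminf_{\kappa \to 0+} \kappa \log \PrPush^\kappa [\open]
\; \geq \; - \inf_{x \in f^{-1}(\open)} \lenergy(x)
\; = \; - \inf_{y \in \open} \nBessel(y) .
\end{align*}

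There is no significant obstacle in this argument; the only subtle point is ensuring the infimum defining $\nBessel(y)$ is attained when $\nBessel(y) < \infty$, so that the level sets of $\nBessel$ can be expressed as the image under $f$ of level sets of $\lenergy$. This subtlety is resolved precisely by the goodness of $\lenergy$ together with continuity of $f$, as carried out in the first step.
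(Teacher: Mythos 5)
The paper does not prove this statement at all: it is quoted verbatim as Theorem~4.2.1 of Dembo--Zeitouni and used as a black box, so there is no internal proof to compare against. Your proposal is essentially the standard textbook proof of the contraction principle, and its overall structure (the infimum identity over preimages, the identification $\nBessel^{-1}[0,c]=f(\lenergy^{-1}[0,c])$, and the transfer of the bounds via $\PrPush^\kappa[\cdot]=\Pr^\kappa[f^{-1}(\cdot)]$ with $f^{-1}(\closed)$ closed and $f^{-1}(\open)$ open) is correct.

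One step is not valid at the stated level of generality. The theorem is asserted for arbitrary Hausdorff topological spaces, and in such spaces a compact set need not be sequentially compact, so you cannot extract a convergent subsequence of $(x_m)$ from the compactness of $\lenergy^{-1}[0,c+1]$. The standard fix is to avoid sequences altogether: since $\spaceY$ is Hausdorff, $\{y\}$ is closed, hence $f^{-1}\{y\}$ is closed by continuity of $f$; the sets $K_\epsilon := f^{-1}\{y\}\cap \lenergy^{-1}[0,\nBessel(y)+\epsilon]$ for $0<\epsilon\le 1$ are nonempty compact sets with the finite intersection property, so their intersection contains a point $x^*$ with $f(x^*)=y$ and $\lenergy(x^*)=\nBessel(y)\le c$. (Equivalently, pass to a convergent subnet rather than a subsequence.) With that replacement your argument is complete; and in the actual applications in this paper the space $\spaceX$ is metric, where your sequential version is already legitimate.
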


\begin{prop}
[Restricted LDP]
\label{prop:restricted-LDP} 
Let $\spaceX$ be a Hausdorff topological space, $(\prob^\kappa)_{\kappa >  0}$ a family of probability measures on $\spaceX$, and $\lenergy \colon \spaceX \to [0,+\infty]$ a rate function. 
Suppose $A \subset \spaceX$ is a measurable subset such that $I^{-1}[0,\infty) \subset A$ and $\prob^\kappa[A] = 1$ for every $\kappa > 0$. 
Then, the family $(\prob|_A^\kappa)_{\kappa > 0}$ of restricted measures satisfies an LDP in $A$ with rate function $\lenergy|_A$ if and only if the family $(\prob^\kappa)_{\kappa > 0}$ satisfies an LDP in $\spaceX$ with rate function $\lenergy$.
Specifically, for every closed $\closed \subset \spaceX$ and open $\open \subset \spaceX$, 
the following equivalences hold:
\begin{align*}
\limsup_{\kappa \to 0}\kappa\log\prob^\kappa[\closed]  \, \geq \,  -\inf_{x \in \closed}\lenergy(x) 
\qquad \Longleftrightarrow \;& \qquad 
\limsup_{\kappa \to 0}\kappa\log\prob|_A^\kappa[\closed \cap A]  \, \geq \,  -\inf_{x \in \closed \cap A}\lenergy|_A(x) , \\
\liminf_{\kappa \to 0}\kappa\log\prob^\kappa[\open]  \, \leq \,  -\inf_{x \in \open}\lenergy(x) \qquad \Longleftrightarrow \;& \qquad 
\liminf_{\kappa \to 0}\kappa\log\prob|_A^\kappa[\open \cap A]  \, \ge \,  -\inf_{x \in \open \cap A}\lenergy|_A(x) .
\end{align*}
Furthermore, $I$ is a good rate function if and only if $I|_A$ is a good rate function.
\end{prop}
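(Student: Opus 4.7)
The plan is to exploit two elementary facts: first, that points outside $A$ carry zero $\prob^\kappa$-mass for every $\kappa>0$; second, that points outside $A$ all have $\lenergy = +\infty$. Together these mean that neither the probabilities nor the infima ``see'' the complement of $A$, which should make the equivalences essentially tautological once the subspace topology on $A$ is unpacked.

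More concretely, I would first record two lemmas. The measure-theoretic lemma asserts that for every Borel $B\subseteq \spaceX$,
\begin{align*}
\prob^\kappa[B] \;=\; \prob^\kappa[B\cap A] \;=\; \prob|_A^\kappa[B\cap A] ,
\end{align*}
which is immediate from $\prob^\kappa[A]=1$ and the definition of the restriction. The variational lemma asserts that for every $B\subseteq \spaceX$,
\begin{align*}
\inf_{x\in B}\lenergy(x) \;=\; \inf_{x\in B\cap A}\lenergy|_A(x) ,
\end{align*}
with the usual convention $\inf\emptyset=+\infty$. This holds because every $x\in B\setminus A$ has $\lenergy(x)=+\infty$ (by the hypothesis $\lenergy^{-1}[0,\infty)\subseteq A$), so such points either do not lower the infimum or are the only points in $B$, in which case both sides are $+\infty$.

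Next I would identify open/closed sets in $A$ with their ambient counterparts: by definition of the subspace topology, every open $O'\subseteq A$ has the form $O'=\open\cap A$ for some $\open$ open in $\spaceX$, and every closed $F'\subseteq A$ has the form $F'=\closed\cap A$ for some $\closed$ closed in $\spaceX$. Conversely, $\closed\cap A$ is closed in $A$ and $\open\cap A$ is open in $A$. Combining the two lemmas with this topological dictionary, each of the four inequalities in the proposition rewrites termwise into its counterpart: the probabilities match by the measure lemma and the infima match by the variational lemma. This yields the asserted equivalences. Finally, for the goodness statement, observe that for every $c\in[0,\infty)$ the level set $\lenergy^{-1}[0,c]$ lies inside $A$ (again by $\lenergy^{-1}[0,\infty)\subseteq A$), and equals $(\lenergy|_A)^{-1}[0,c]$ as a set; since $A$ carries the subspace topology, compactness of this set in $A$ is equivalent to compactness in $\spaceX$. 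Hence $\lenergy$ is good if and only if $\lenergy|_A$ is.

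I do not foresee a substantive obstacle; the only delicate point is bookkeeping around the edge case $B\cap A=\emptyset$ (ensuring both infima are $+\infty$ by convention and that both probabilities vanish), together with the care needed to state the topological identification between $A$ and $\spaceX$ so that one does not accidentally conflate ``closed in $A$'' with ``closed in $\spaceX$ and contained in $A$.''
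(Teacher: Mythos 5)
Your proposal is correct and follows essentially the same route as the paper: you isolate the same two identities (equality of probabilities on $B$, $B\cap A$, and the restricted measure; equality of infima of $\lenergy$ over $B$, $B\cap A$, and of $\lenergy|_A$ over $B\cap A$), and deduce the equivalences from them together with the subspace-topology dictionary and the level-set observation for goodness. The paper merely states these identities and the conclusion in compressed form; your write-up fills in the same reasoning more explicitly.
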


\begin{proof}
Since $\prob^\kappa[A] = 1$ and $\lenergy^{-1}[0,\infty) \subset A$, for any measurable set $B \subset \spaceX$, we have
\begin{align*}
\prob^\kappa[B] = \prob^\kappa[B \cap A] = \prob|_A^\kappa[B\cap A] 
\qquad \textnormal{and} \qquad 
\inf_{x \in B}\lenergy(x) = \inf_{x \in B\cap A}\lenergy(x) = \inf_{x \in B\cap A}\lenergy|_A(x).
\end{align*}
The equivalences follow. 
The last claim follows from the assumption $I^{-1}[0,\infty) \subset A$.
\end{proof}

\begin{remark}\label{rem:measurability}
In \Cref{prop:restricted-LDP} it is enough to assume that $A \subset X$ is some (not necessarily measurable) subset such that $I^{-1}[0,\infty) \subset A$ and for every $\kappa > 0$ there is some measurable set $A^\kappa \subset A$ such that $\prob^\kappa[A^\kappa] = 1$. We equip $A$ with the subspace topology induced from $X$, and define the restricted measures by $\prob|^\kappa_A[E] := \prob^\kappa[E\cap A^\kappa]$.
\end{remark}

Recall that a \emph{hull} is a compact set $K \subset \overline{\bD}$ such that $\bD\smallsetminus K$ is simply connected, $0 \in \bD\smallsetminus K$, and the closure $\overline{K \cap \bD} = K$ in $\bC$. 
For each hull $K$, we denote by $g_K \colon \bD\smallsetminus K \to \bD$ the uniformizing map normalized at the origin, i.e., satisfying $g_K(0) = 0$ and $g_K'(0)>0$. 
We call $\log g_K'(0)$ the \emph{capacity} of $K$, so that the complement of $K$ has conformal radius $1/g_K'(0) = e^{-\log g_K'(0)}$.
For each fixed $\bigT \in (0,\infty)$, we denote 
\begin{align}\label{eqn:def_of_K_T}
\hullspace{\bigT} = \big\{ \textnormal{hulls } K \subset \overline{\bD} \textnormal{ of capacity }n \bigT \big\} .
\qquad \hullspace{} := \bigcup_{T \ge 0} \hullspace{\bigT} .
\end{align}
We endow the space $\hullspace{}$ of hulls with the coarsest (Carath\'eodory) topology 
for which a sequence $(K_{(k)})_{k \in \bN}$ in $\hullspace{}$ converges to $K \in \hullspace{} $ if and only if the associated functions $g_{K_{(k)}}^{-1}$ converge to $g_K^{-1}$ uniformly on compact subsets of $\bD$. 
By~\cite[Theorem~3.1]{Duren:Univalent_functions}, 
this is equivalent to the \emph{Carath\'eodory kernel convergence} of the complementary domains $D_{(k)} := \bD\smallsetminus K_{(k)}$ to $D := \bD\smallsetminus K$ with respect to the origin:  
for any subsequence $(D_{(k_j)})_{j \in \bN}$ we have $D = \bigcup_{j\ge 1} \big(\bigcap_{i \ge j} D_{(k_j)}\big)_0$, denoting by $V_0$ the connected component of a set $V \subset \overline{\bD}$ containing the origin. 
Because we require that $\overline{K \cap \bD} = K$ for any hull $K$,
we see that for two hulls $K \neq \tilde{K}$, we have $\bD \smallsetminus K \neq \bD \smallsetminus \tilde{K}$, which shows that the Carath\'eodory topology on the set $\hullspace{}$ has the Hausdorff (T2) property. (This is required in the contraction principle.)

Although $\hullspace{} \subset \compsets$ is contained in the set of compact subsets of $\overline{\bD}$, the Carath\'eodory and Hausdorff~\eqref{eq: Hausdorff metric} 
topologies on $\hullspace{}$ are not comparable. 
However, we can characterize their difference in the following useful manner (via a radial analogue of~\cite[Lemma~2.3]{Peltola-Wang:LDP}).

\begin{lem}\label{lem:caratheodory-vs-hausdorff-convergence}
Suppose that a sequence $(K_{(k)})_{k \in \bN}$ in $\hullspace{}$ converges to $K \in \mathcal{K}$ in the Carath\'eodory sense and to $\tilde K \in \compsets$ in the Hausdorff metric. 
Then $\bD\smallsetminus K = (\bD\smallsetminus \tilde K)_0$. 
In particular, we have $\bD\cap K = \bD\cap \tilde K$ if and only if $\bD\smallsetminus \tilde K$ is connected. 
\end{lem}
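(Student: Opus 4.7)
\textbf{The plan} is to prove the two inclusions $\bD \smallsetminus K \subseteq (\bD \smallsetminus \tilde K)_0$ and $(\bD \smallsetminus \tilde K)_0 \subseteq \bD \smallsetminus K$ separately, and then derive the second claim by complementation in $\bD$. Throughout, I will rely on the standard equivalent characterization of Carathéodory kernel convergence (part of the Carathéodory kernel theorem, see, e.g., \cite{Duren:Univalent_functions}): every compact subset of $D := \bD \smallsetminus K$ is contained in $D_{(k)} := \bD \smallsetminus K_{(k)}$ for all sufficiently large $k$. Note that $0 \in D$ by the definition of a hull.

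For the first inclusion, I would fix $z \in D$ and choose a small open ball $B = \{v : |v-z| < r\}$ with $\overline B \subset D$. Carathéodory convergence yields $\overline B \cap K_{(k)} = \emptyset$ for all large $k$. If $\tilde K$ contained a point $y$ with $|y-z| < r/2$, then Hausdorff convergence would produce $y_{(k)} \in K_{(k)}$ with $y_{(k)} \to y$, eventually landing in $\overline B$ --- a contradiction. Hence $z \notin \tilde K$, so $D \subseteq \bD \smallsetminus \tilde K$; since $D$ is connected and contains $0$, this forces $D \subseteq (\bD \smallsetminus \tilde K)_0$.

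For the reverse inclusion, I would fix $w \in (\bD \smallsetminus \tilde K)_0$. Since $\bD \smallsetminus \tilde K$ is open in $\bC$ (being the intersection of the open set $\bD$ with the complement of the compact set $\tilde K$), the connected component $(\bD \smallsetminus \tilde K)_0$ is open and hence path-connected. Picking a continuous path $\eta \colon [0,1] \to (\bD \smallsetminus \tilde K)_0$ from $0$ to $w$, I would set $\delta := d\big(\eta([0,1]),\, (\bC \smallsetminus \bD) \cup \tilde K\big) > 0$. Then $U := \{v \in \bC : d(v,\, \eta([0,1])) < \delta/2\}$ is an open connected set containing $0$ with compact closure $\overline U \subset \bD \smallsetminus \tilde K$. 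By Hausdorff convergence, $K_{(k)}$ is contained in a $(\delta/4)$-neighborhood of $\tilde K$ for all large $k$, so $\overline U \cap K_{(k)} = \emptyset$, i.e., $\overline U \subset D_{(k)}$ eventually. Since $U$ is connected, contains $0$, and every compact subset of $U$ lies in $\overline U$, the maximality in the defining property of the Carathéodory kernel forces $U \subseteq D$, hence $w \in D$.

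The second claim then follows immediately: taking complements in $\bD$ of the identity $\bD \smallsetminus K = (\bD \smallsetminus \tilde K)_0$ gives $\bD \cap K = \bD \smallsetminus (\bD \smallsetminus \tilde K)_0$, which equals $\bD \cap \tilde K$ if and only if $(\bD \smallsetminus \tilde K)_0 = \bD \smallsetminus \tilde K$ --- equivalently, since $0 \in \bD \smallsetminus \tilde K$, if and only if $\bD \smallsetminus \tilde K$ is connected. \textbf{The main technical point} is the reverse inclusion: Carathéodory convergence alone sees only the component of the complement containing $0$, whereas Hausdorff convergence tracks the full hull, and the construction of a tubular neighborhood $U$ around a connecting path is precisely what couples the two notions so that both can be applied to the same set simultaneously.
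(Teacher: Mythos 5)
Your proof is correct. The paper does not spell out the argument itself but simply cites \cite[Lemma~2.3]{Peltola-Wang:LDP}; your self-contained write-up, proving the two inclusions via the compact-exhaustion and maximality characterization of the Carathéodory kernel and then deducing the second claim by complementation in $\bD$, is the standard route and matches what that reference does.
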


\begin{proof}
This follows by the same proof as~\cite[Lemma~2.3]{Peltola-Wang:LDP}.
\end{proof}

The Loewner transform $\cL_\bigT \colon \BesselSpace{\bigT]}{} \to \compsets$ sends driving functions to hulls, 
\begin{align*}
\cL_\bigT(\btheta) 
:= \; & \{z \in \overline{\bD} \; | \; \swallowtime{z} \le \bigT\} 
\; \subset \; \hullspace{\bigT} 
\; \subset \; \compsets .
\end{align*}
It is well-known that $\cL_\bigT$ is
continuous in the Carath\'eodory sense 
(see~\cite[Proposition~6.1]{Sheffield-Miller:QLE} for a proof for general Loewner chains).
While $\cL_\bigT$ is not continuous in the Hausdorff metric, its discontinuities occur outside of the set of simple curves (cf.~\cite[Lemma~2.4]{Peltola-Wang:LDP}).
 
\begin{proof}[Proof of Theorem~\ref{thm:radial_LDP_finite_time}]
We can write the Loewner transform as a composition $\cL_\bigT = \iota\circ\cL_\bigT'$, where $\cL_\bigT' \colon \BesselSpace{\bigT]}{} \to \hullspace{\bigT}$ is the Loewner transform to the set~\eqref{eqn:def_of_K_T} of hulls of capacity $n \bigT$, 
and $\iota \colon \hullspace{} \hookrightarrow \compsets$ is the inclusion of the hulls to the compact subsets of $\overline{\bD}$.
Now, the map $\cL_\bigT'$ is continuous in the Carath\'eodory sense (e.g., by~\cite[Proposition~6.1]{Sheffield-Miller:QLE}), 
so \Cref{thm:Bessel_LDP} and the contraction principle (\Cref{thm:contraction-principle}) 
together imply that the initial segments $\bgamma^\kappa_{[0,\bigT]}$ of multiradial $\SLE_\kappa$ curves with laws $(\probSLE)_{\kappa > 0}$ satisfy an LDP in $\hullspace{\bigT}$ (in the Carath\'eodory sense)
with good rate function $\lenergy_\bigT' \colon \hullspace{\bigT}\to[0,+\infty]$ defined similarly to~\eqref{eqn:nradial-energy},
\begin{align*}
I'_T(K) := \inf_{\btheta\in(\cL_\bigT')^{-1}(K)} \nBessel_\bigT(\btheta).
\end{align*}
Next, denote by $\rmultichordspace \subset \hullspace{\bigT}$ the set of simple radial multichords with total capacity $n \bigT$ which are generated by a driving function in the $1$-common parameterization (as in \Cref{def: generate multichord}). 
For $\kappa \leq 4$, 
we have $\probSLE[\rmultichordspace] = 1$, while by \Cref{thm:finite-energy-gives-simple-curves}, the set $\rmultichordspace$ contains all finite-energy hulls. 
Thus, we deduce from \Cref{prop:restricted-LDP}\footnote{Note that $\rmultichordspace$ contains a $\probSLE$-measurable subset of full measure, because the solution of the Loewner equation is measurable with respect to the driving process (whose law is $\probSLE$). See also \Cref{rem:measurability}.} 
that the family $(\probSLE)_{\kappa > 0}$ satisfies an LDP in $\rmultichordspace$ in the Carath\'eodory sense
and with good rate function $\lenergy_\bigT'|_{\rmultichordspace}$.

Now, we claim that the restricted map $\iota|_{\rmultichordspace} \colon \rmultichordspace \hookrightarrow \compsets$ is continuous, when the former space carries the Carath\'eodory topology and the latter the Hausdorff metric.  Indeed, suppose that 
a sequence $\big(\boldeta_{(k)} \big)_{k\in \bN}$ of simple radial multichords in $\rmultichordspace$ converges to $\boldeta \in \rmultichordspace$ in the Carath\'eodory sense. 
By compactness of $\compsets$, passing to a subsequence, 
$\boldeta_{(k)}$ also converge in the Hausdorff metric to some $\tilde\boldeta \in \compsets$.
Then \Cref{lem:caratheodory-vs-hausdorff-convergence} implies that $\bD\cap \boldeta = \bD\cap \tilde \boldeta$ (since otherwise, $\boldeta$ would have non-empty interior).
Furthermore, since $\boldeta$ is a hull and $\tilde \boldeta$ is compact, 
this implies that $\boldeta = \overline{\boldeta \cap \bD} = \overline{\bD\cap \tilde \boldeta} \,\subseteq \, \tilde \boldeta$.
Now, if $x \in (\partial\bD) \smallsetminus \boldeta$, 
then the sets $\boldeta_{(k)}$ avoid $x$ for large enough $k$, so $x \notin \tilde \boldeta$.
It follows that 
$\boldeta$ and $\tilde \boldeta$ agree on the interior of the disk and also on the boundary, so 
$\boldeta = \tilde \boldeta$. 
This shows that $\iota|_{\rmultichordspace} \colon \rmultichordspace \hookrightarrow \compsets$ is continuous. 
Therefore, we can apply
the contraction principle (\Cref{thm:contraction-principle}) 
again to deduce that the pushforward measures $(\probSLE \circ (\iota|_{\rmultichordspace}))_{\kappa > 0}$ satisfy an LDP in $\iota(\rmultichordspace) \subset \compsets$ 
in the Hausdorff metric with good rate function $(\lenergy'_\bigT\circ\iota^{-1})|_{\iota(\rmultichordspace)} = (\lenergy_\bigT)|_{\iota(\rmultichordspace)}$. 
From this, we conclude again using \Cref{prop:restricted-LDP} 
that the initial segments $\bgamma^\kappa_{[0,\bigT]}$ of multiradial $\SLE_\kappa$ curves with laws $(\probSLE)_{\kappa > 0}$ 
indeed satisfy the LDP~(\ref{eq: limsup claim curves},~\ref{eq: liminf claim curves}) in $\compsets$ with good rate function $\lenergy_\bigT$.
\end{proof}

\section{Large-time behavior of finite-energy systems}\label{sec:zero-energy}

By the definition of multiradial Loewner energy, finite-energy curves are exactly those whose driving functions have finite multiradial Dirichlet energy. 
In turn, for any function with finite Dyson-Dirichlet energy (of which multiradial Dirichlet energy is a special case), we intuitively expect that as $t\to \infty$, 
the interacting particle system that describes these functions approaches an equilibrium configuration.
This section makes these ideas precise.

The key result, \Cref{thm:U_equally_spaced}, follows by collecting the results of this section:
\begin{proof}[Proof of Theorem~\ref{thm:U_equally_spaced}]
This is the content of Propositions~\ref{prop:limit_equally_spaced_zero}~\&~\ref{prop:finite_energy} below.
\end{proof}

\subsection{Zero-energy flow: existence and uniqueness}

Clearly, the $n$-dimensional Dirichlet energy $\nDenergy_{\bigT}$ appearing in~\eqref{eqn:nDirichlet_energy_def} is non-negative and attains the minimum 
$\nDenergy_{\bigT}(\btheta_0) = 0$ at the constant function $\btheta \equiv \btheta_0$.
Although the sign of the functional $\Phi^0_\bigT$ is not clear from its formula~\eqref{eqn:Phi_kappa}, 
Proposition~\ref{prop:ODE_existence_uniqueness} below shows that the Dyson-Dirichlet energy $\nBessel_\bigT$ also attains the minimum zero. 
From \Cref{def:multiradial_Dirichlet_energy}, we see that its minimizers satisfy an ODE system, which in fact has a unique global solution.

\begin{prop} \label{prop:ODE_existence_uniqueness}
Assume that $\DPotential$ is a Dyson-type potential as in \Cref{def:Dyson_type_potential}. 
The system of differential equations on $\BesselSpace{\infty)}{\btheta_0}$ given by
\begin{align} \label{eqn:ODE_system} 
\tfrac{\ud}{\ud t} \theta^j_t =  \newphi^j (\btheta_t) , 
\qquad \textnormal{for all } t \geq 0 \textnormal{ and } j \in \{1,\ldots,n\} ,
\end{align}
has a unique solution for each initial configuration $\btheta_0 \in \chamber$. 
\end{prop}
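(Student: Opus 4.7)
Since each $\phi^j$ is real-analytic on the open chamber $\chamber$, the Picard-Lindelöf theorem furnishes a unique maximal $C^1$ solution $\btheta_{\bullet} \colon [0, T^*) \to \chamber$ for some $T^* \in (0, +\infty]$. The only obstruction to $T^* = +\infty$ is the solution approaching the boundary of $\chamber$ in finite time, i.e., some consecutive pair $\theta^{j+1}_t - \theta^j_t$ (using the convention $\theta^{n+1} = \theta^1 + 2\pi$) shrinking to zero. The plan is to rule this out by showing that the minimum consecutive gap
\[ m(t) \; := \; \min_{1 \leq j \leq n}(\theta^{j+1}_t - \theta^j_t), \qquad t \in [0, T^*), \]
is non-decreasing along the flow.

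Since the $n$ gaps sum to $2\pi$, one always has $m(t) \leq 2\pi/n$; so by Lemma~\ref{lem:phi-difference-estimate}, applied at any index $j$ attaining the minimum at time $t$,
\[ \frac{\ud}{\ud t}(\theta^{j+1}_t - \theta^j_t) \; = \; 2\bigl(\phi^{j+1}(\btheta_t) - \phi^j(\btheta_t)\bigr) \; \geq \; 2\pi - n\, m(t) \; \geq \; 0. \]
As the pointwise minimum of finitely many $C^1$ functions, $m$ is locally Lipschitz (hence differentiable a.e.), and at each differentiability point $t$ one has $m'(t) = \frac{\ud}{\ud t}(\theta^{j^*+1}_t - \theta^{j^*}_t)$ for any attaining index $j^*$. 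Combined with the display, this yields $m'(t) \geq 0$ a.e., hence $m(t) \geq m(0) > 0$ throughout $[0, T^*)$. Consequently $\btheta_t$ remains in the compact subset $\{\btheta \in \chamber \; | \; \theta^{k+1} - \theta^k \geq m(0) \text{ for all } k\}$ of the torus $(\bR/2\pi\bZ)^n$ for all $t \in [0, T^*)$, so standard continuation theory for ODEs with a locally Lipschitz drift on an open set forces $T^* = +\infty$.

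The only slightly delicate point is the Danskin-type envelope argument identifying $m'(t)$ with the derivative of an attaining gap, which is standard for minima of finitely many smooth functions. As a parallel approach (also useful for the subsequent asymptotic analysis), one can observe that $\GA(\btheta) = -\A \log F(\btheta)$ serves as a Lyapunov function for the flow: a direct computation using $\partial_{\theta^j} \log F = \tfrac{1}{2}\phi^j$ gives $\frac{\ud}{\ud t}\GA(\btheta_t) = -\A \sum_j (\phi^j(\btheta_t))^2 \leq 0$, whence $F(\btheta_t) \geq F(\btheta_0) > 0$; since each pairwise sine factor $|\sin((\theta^k-\theta^j)/2)|$ is at most $1$, each is in fact bounded below by $F(\btheta_0)$, again preventing collisions.
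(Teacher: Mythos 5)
Your primary argument is essentially the paper's: Picard-Lindel\"of on the open chamber, then monotonicity of the smallest consecutive gap via \Cref{lem:phi-difference-estimate}, with the same Danskin/Rademacher reasoning for the a.e.\ derivative of a pointwise minimum. The details of how you conclude global existence differ cosmetically (maximal interval and compact invariant set, vs.\ the paper's uniform-in-$\varepsilon$ existence time on $\chamber^\varepsilon$), but the substance is identical.

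Your alternative Lyapunov argument is genuinely different and worth noting: using $\partial_{\theta^j} \log F = \tfrac{1}{2}\phi^j$, the flow~\eqref{eqn:ODE_system} is a gradient ascent for $\log F$, so $F(\btheta_t) \geq F(\btheta_0) > 0$ along trajectories; since each factor $|\sin((\theta^k-\theta^j)/2)| \leq 1$, this forces every factor to stay $\geq F(\btheta_0)$, ruling out collisions. This route bypasses \Cref{lem:phi-difference-estimate} and the envelope argument entirely, at the modest cost of giving a weaker gap bound (a lower bound on the sines rather than monotonicity of the minimum gap, the latter being what the paper reuses for its asymptotic analysis in \Cref{prop:limit_equally_spaced_zero}). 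Both arguments are correct.
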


The derivatives $\partial_k\newphi^j = \partial_k\partial_j\log \partfn$ in~\eqref{eqn:def_of_drift} are continuous, so 
for each $\epsilon > 0$, the function $\btheta \mapsto \newphi^j(\btheta)$ is Lipschitz on the compact set $\chamberCl^\epsilon$. 
Consequently, for any initial configuration $\btheta_0 \in \chamber^\epsilon$, a unique solution to the ODE system~\eqref{eqn:ODE_system} exists in $\chamber^\epsilon$ up until the time when the boundary $\partial \chamber^\epsilon$ is hit. 
(Indeed, the Picard-Lindel\"of theorem implies that the unique solution exists on a time interval whose length depends only on the Lipschitz constant, which in turn only depends on $\epsilon$.) 
It thus suffices to show that any solution to~\eqref{eqn:ODE_system} with initial condition in $\chamber^\epsilon$ stays in $\chamber^\epsilon$ without hitting $\partial \chamber^\epsilon$. 
Instead of invoking a direct argument, though, we provide a proof that uses properties of the Dyson-Diriclet energy.
(We shall provide a direct argument for separately convex potentials later in \Cref{eqn:Delta_derivative} in the proof of \Cref{prop:limit_equally_spaced_zero}, when we characterize the long-time asymptotics of the solutions.) 

\begin{proof}[Proof of \Cref{prop:ODE_existence_uniqueness}]
By \Cref{rem:minimizers-exist}, for every $\bigT \ge 0$ there exists at least one minimizer $\btheta \in \BesselSpace{\bigT]}{\btheta_0}$ with $\nBessel_{\bigT}(\btheta) = 0$
--- thus, a solution to~\eqref{eqn:ODE_system}. 
Since finite-energy drivers are continuous, from~\eqref{eqn: deterministic collision time} in \Cref{cor:finite-energy-characterization} we see that there exists $\epsilon(\bigT) > 0$ such that $\btheta_t \in \chamber^{\epsilon(\bigT)}$ for all $t \in [0, \bigT]$.
Now, as each $\newphi^j$ is Lipschitz on $\chamber^{\epsilon(\bigT)}$, the uniqueness of the minimizer follows by applying the Picard-Lindel\"of theorem to two possible solutions to~\eqref{eqn:ODE_system} 
with the same initial configuration $\btheta_0 \in \chamber^{\epsilon(\bigT)}$.
Since $T \ge 0$ was arbitrary, we can extend the solution for all times, to obtain the sought $\btheta \in \BesselSpace{\infty)}{\btheta_0}$.
\end{proof}

\begin{remark}\label{rem:spiral_existence_soln}
The above proof in fact also works for Dyson-Dirichlet energy with spiral (recalling \Cref{rem:finite-energy-characterization_spiral} and \Cref{rem:spiral_SLE}). 
We will return to this in \Cref{rem:spiral_minimizers}.
\end{remark}

\subsection{Zero-energy systems for symmetric separately convex potentials}
\label{subsec:zero-energy-existence}

Our primary interest are Dyson-type potentials which are separately convex.  
To simplify the analysis, we will also assume that they are symmetric, i.e., comprise identical pair potentials governing 
the interaction $\pairderiv$ of each pair of neighboring particles (as in \Cref{def:symmetric_Dyson_type_potential}).
Hence, we assume from now on that for each $j \in \{1,\ldots,n\}$, 
\begin{align}\label{eqn:deriv_bound_def}
\newphi^j= \sum_{\substack{1 \leq k \leq n \\[.1em] k\neq j}} \pairderiv(\theta^j - \theta^k) \qquad \textnormal{and} \qquad \constphi = - \sup_{\theta\in(0, 2\pi)} \pairderiv'(\theta) > 0,
\end{align}
where $\pairderiv \in C^1 (\chamberSingle\setminus\{0\}, \bR)$ is an odd function\footnote{In particular, we have $\pairderiv(\theta)=\pairderiv(2\pi-\theta)$, and $\theta \mapsto \pairderiv'(\theta)$ is an even function.} with $\pairderiv(\pi)=0$ and 
$\smash{\underset{\theta \to 0+}{\lim} \, \pairderiv(\theta) \in (0,\infty]}$.

The constant $\constphi>0$ plays an important role in our analysis and appears in the exponential rate of convergence in \Cref{prop:limit_equally_spaced_zero} below,
as well as in the main \Cref{thm:U_equally_spaced}.

In fact, the results of this section hold just under the assumption that $\newphi^j$ satisfy~\eqref{eqn:deriv_bound_def} for each $j$; in other words,
neither the existence of the potential itself, nor the asymptotics~\eqref{eqn:growth_conditions} or differential inequalities~\eqref{eq:differential_inequalities}, are needed to establish the results concerning the zero-energy and finite-energy systems themselves.
In particular, the potential, if exists, is not required to be bounded from below (which allows slightly more general potentials).

\begin{lem}\label{lem:phi-difference-estimate2}
Fix $\btheta \in \chamber$. Suppose $\newphi^j$ satisfy~\eqref{eqn:deriv_bound_def} for each $j$. 
Using the convention that $\theta^{n+1} = \theta^1+2\pi$ as in~\eqref{eq: torus ordering}, 
and $\theta^{0} = \theta^n-2\pi$, we have
\begin{align}
\label{eqn:phi-difference-estimate_min2}
\newphi^{\jminzero+1}(\btheta) - \newphi^\jminzero(\btheta) 
\ge \; & \constphi \big(2\pi -n (\theta^{\jminzero+1}-\theta^\jminzero)\big)\geq 0 ,
\end{align}
for any index $\jminzero \in \smash{\underset{1 \leq k \leq n}{\argmin} \, \big| \theta^{k+1}-\theta^k \big|}$. 
\end{lem}

\begin{proof}
Rewriting the lefthand side of~\eqref{eqn:phi-difference-estimate_min2} using~\eqref{eqn:deriv_bound_def} yields
\begin{align} \label{eqn:phi-difference}
\newphi^{\jminzero+1}(\btheta) - \newphi^\jminzero(\btheta) 
= \; & \sum_{1\leq k \neq \jminzero+1 \leq n} \big( \pairderiv(\theta^{\jminzero+1} - \theta^{k}) - \pairderiv(\theta^{\jminzero} - \theta^{k-1}) \big) .
\end{align}
A key observation is that the sum is telescoping when $\theta^1, \ldots, \theta^n$ are equally-spaced on $[0, 2\pi)$; 
an idea that will be used again in the proof of \Cref{prop:limit_equally_spaced_zero}.  
Note first that 
\begin{align}\label{eqn:difference_est}
\pairderiv(u)-\pairderiv(v) \geq \constphi (v-u) > 0, \qquad 0 < u \leq v < 2\pi ,
\end{align}
where $\constphi> 0$ since $\pairderiv'<0$ by~\eqref{eqn:deriv_bound_def}. 
The definition of $\jminzero$ as the minimizing index guarantees that $\theta^{\jminzero+1}-\theta^k\leq \theta^\jminzero-\theta^{k-1}$ for every $k$, allowing us to apply~\eqref{eqn:difference_est} with $u=\theta^{\jminzero+1}-\theta^k$ and $v= \theta^\jminzero-\theta^{k-1}$.
We can use this to bound the terms in~\eqref{eqn:phi-difference} as
\begin{align*}
\pairderiv \big(\theta^{\jminzero+1} - \theta^k\big) - \pairderiv \big( \theta^\jminzero - \theta^{k-1} \big) 
\geq \; & \constphi ( \theta^\jminzero - \theta^{k-1} ) - \constphi( \theta^{\jminzero+1} - \theta^k ) .
\end{align*}
Substituting these bounds into~\eqref{eqn:phi-difference}, then adding and subtracting $\constphi(\theta^{\jminzero+1}-\theta^\jminzero)$, 
we obtain
\begin{align*} 
\newphi^{\jminzero+1}(\btheta) - \newphi^\jminzero(\btheta) 
\geq \; &
\constphi  \, \underbrace{\sum_{k=1}^n  (\theta^k - \theta^{k-1} )}_{= \,2\pi} 
- \, n \, \constphi ( \theta^{\jminzero+1} - \theta^\jminzero ) .
\end{align*}
This gives the asserted inequality~\eqref{eqn:phi-difference-estimate_min2}. 
\end{proof}

The next result shows that, for any initial configuration, the zero-energy particle system eventually approaches a static equally-spaced configuration, moreover exponentially fast.

\begin{prop}\label{prop:limit_equally_spaced_zero}
Suppose $\newphi^j$ satisfy~\eqref{eqn:deriv_bound_def} for each $j$. 
If $\nBessel(\btheta) = 0$, then there exists $\zeta \in \bR$ such that 
\begin{align} \label{eq:equally-spaced}
\lim_{t\to \infty} \btheta_t = \big( \zeta , \, \zeta + \tfrac{2\pi}{n} , \, \ldots , \, \zeta+ \tfrac{(n-1)2\pi}{n} \big) ,
\end{align}
and the convergence is exponentially fast with exponential rate $\constphi n$.
\end{prop}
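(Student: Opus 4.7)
The plan is to leverage the differential inequality already extracted in the proof of \Cref{prop:ODE_existence_uniqueness} to obtain exponential convergence of the minimum gap, then to exploit conservation of the center of mass to upgrade gap-convergence to pointwise convergence of each coordinate. Throughout, write $d^j(t) := \theta^{j+1}_t - \theta^j_t$ for the $j$-th gap (with $\theta^{n+1} = \theta^1 + 2\pi$), so that $\Delta(t) = \min_j d^j(t)$. Since $\nBessel(\btheta) = 0$, the integrand in Definition~\ref{def:multiradial_Dirichlet_energy} vanishes almost everywhere, hence $\btheta$ is the unique solution in $\BesselSpace{\infty)}{\btheta_0}$ to the ODE system~\eqref{eqn:ODE_system}.

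The first step is to show exponential convergence $\Delta(t) \to 2\pi/n$ at rate $n$. From Equation~\eqref{eqn:der_pos} in the proof of \Cref{prop:ODE_existence_uniqueness}, we have, for every $j \in A_t := \argmin_k d^k(t)$, the pointwise bound $\tfrac{\ud}{\ud t} d^j(t) \geq 2\pi - n\,\Delta(t)$. The function $t \mapsto \Delta(t)$ is a minimum of finitely many $C^1$ functions of $t$, hence locally Lipschitz, hence differentiable a.e.\ by Rademacher's theorem, and at every point of differentiability its derivative coincides with $\tfrac{\ud}{\ud t} d^j(t)$ for some $j \in A_t$ (Danskin-type reasoning). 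Consequently $\tfrac{\ud}{\ud t} \Delta(t) \geq 2\pi - n\,\Delta(t)$ a.e. Setting $f(t) := 2\pi/n - \Delta(t)$, which is non-negative since the $n$ gaps sum to $2\pi$, this rewrites as the a.e.\ differential inequality $\tfrac{\ud}{\ud t} f(t) \leq -n\,f(t)$. As $f$ is absolutely continuous, Gr\"onwall's inequality gives $f(t) \leq f(0)\,e^{-nt}$, that is
\begin{align*}
0 \;\leq\; \tfrac{2\pi}{n} - \Delta(t) \;\leq\; \big( \tfrac{2\pi}{n} - \Delta(0) \big)\,e^{-nt} .
\end{align*}

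The second step upgrades this to convergence of every gap $d^j(t)$ at the same exponential rate. Each $d^j(t) \geq \Delta(t) \geq 2\pi/n - C\,e^{-nt}$, while $\sum_{j} d^j(t) = 2\pi$, so summing the other $(n-1)$ lower bounds yields $d^j(t) \leq 2\pi/n + (n-1)C\,e^{-nt}$. Hence $|d^j(t) - 2\pi/n| \leq (n-1)C\,e^{-nt}$ for every $j$.

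The third step identifies the limit phase $\zeta$ via a conservation law. The antisymmetry of the summand in $\sum_j \phi^j(\btheta) = \sum_{j \neq k} \cot\big(\tfrac{\theta^j - \theta^k}{2}\big)$ gives $\sum_j \phi^j(\btheta) = 0$, so along the ODE flow the quantity $S := \sum_{j=1}^n \theta^j_t$ is conserved. Writing $\theta^j_t = \theta^1_t + \sum_{k=1}^{j-1} d^k(t)$, summing over $j$, and using the exponential gap estimate from the previous step,
\begin{align*}
S \;=\; n\,\theta^1_t + \pi(n-1) + O(e^{-nt}) ,
\end{align*}
whence $\theta^1_t \to \zeta := (S - \pi(n-1))/n$ at rate $e^{-nt}$, and then $\theta^j_t \to \zeta + (j-1)\tfrac{2\pi}{n}$ at rate $e^{-nt}$ for every $j$. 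This yields~\eqref{eq:equally-spaced} with the asserted exponential rate $n$.

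The only nontrivial technical point is the a.e.\ differentiation of $\Delta(t)$: once one justifies that $\Delta$ is absolutely continuous and that its a.e.\ derivative satisfies the Danskin-type identity $\Delta'(t) = \tfrac{\ud}{\ud t} d^{j(t)}(t)$ for some $j(t) \in A_t$, the rest is a straightforward Gr\"onwall/pigeonhole argument. All other ingredients (the key algebraic inequality~\eqref{eqn:phi-difference-estimate_min} and the conservation of $\sum_j \theta^j_t$) are either already proved in the paper or immediate from antisymmetry.
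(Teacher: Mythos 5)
Your proof is correct, and the first two steps (exponential decay of the minimum gap via Lemma~\ref{lem:phi-difference-estimate} and Gr\"onwall, then the pigeonhole bound $\Zprocess_t \le (n-1)\zprocess_t$ to control all gaps) coincide with the paper's. The final step, however, takes a genuinely different route. The paper shows convergence of each $\theta^j_t$ by bounding $|\tfrac{\ud}{\ud t}\theta^j_t|$ directly: it pairs the $k$-th and $(-k)$-th cotangent terms in $\phi^j$, uses the $1$-Lipschitz bound on $\cot$ in~\eqref{eqn:cot-uv}, and concludes that $\int_{t_0}^{\infty}|\tfrac{\ud}{\ud t}\theta^j_t|\,\ud t < \infty$. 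You instead observe that $\sum_j\phi^j \equiv 0$ by antisymmetry of $\cot$, so $S := \sum_j\theta^j_t$ is a first integral of the ODE flow; combined with the exponential gap estimate, this pins down $\theta^1_t$ algebraically via $S = n\,\theta^1_t + \pi(n-1) + O(e^{-nt})$. Your approach is shorter, avoids the case analysis on the parity of $n$, and gives the explicit value $\zeta = (S - \pi(n-1))/n$, which the paper's proof does not. The only thing the paper's proof furnishes that yours does not is the explicit constant in the exponential estimate (tracked through $\int_{t_0}^{\infty}|\tfrac{\ud}{\ud t}\theta^j_t|\,\ud t \leq c n^3 e^{-n t_0}$), but the statement only asks for the rate, so this is immaterial. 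Your Danskin-type justification for differentiating the minimum is also adequate (indeed, at a point of differentiability of $\Delta$, the one-sided comparison with any active $d^j$ from both sides forces $\Delta'(t) = \tfrac{\ud}{\ud t}d^j(t)$ for every $j \in A_t$), and it matches what the paper does at~\eqref{eqn:Delta_derivative}.
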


In particular, we have $\constphi=\ONE$ for the multiradial partition function as in~\eqref{eq:DBM_choices}, so in this case the exponential convergence occurs with rate $n$.

From the assumption $\nBessel(\btheta) = 0$ and \Cref{def:multiradial_Dirichlet_energy}, 
we know that $\btheta$ satisfies the ODEs~\eqref{eqn:ODE_system} in \Cref{prop:ODE_existence_uniqueness}. 
In the below proof of \Cref{prop:limit_equally_spaced_zero}, we will as a byproduct also establish that for any $\epsilon > 0$, 
any solution to the ODEs~\eqref{eqn:ODE_system} with initial condition in $\chamber^\epsilon$ stays in $\chamber^\epsilon$ without hitting $\partial \chamber^\epsilon$. 
This gives another proof for the existence and uniqueness of solutions to the ODEs~\eqref{eqn:ODE_system}, under the assumption~\eqref{eqn:deriv_bound_def} on $\newphi^j$ for each $j$.

\begin{proof}
{\bf Step 1.} 
We will first show that all gaps between adjacent particles approach $2\pi/n$: 
\begin{align} \label{eq:little_d_conv}
d(\btheta_t) := \max_{1\leq j\leq n} \bigg| \frac{2\pi}{n} - ( \theta^{j+1}_t-\theta^j_t )  \bigg| 
\quad \overset{t\to \infty}{\longrightarrow} \quad 0, 
\end{align}
and this convergence happens exponentially fast at rate $\constphi n$.
In fact, for this it is actually sufficient to show that the \emph{smallest} gap approaches $2\pi/n$ as $t\to \infty$. 
Indeed, consider
\begin{align} 
\label{eq: zmin}
\zprocess_t := & \; \frac{2\pi}{n} - 
\min_{1\leq j\leq n} \big( \theta^{j+1}_t - \theta^j_t \big)
= \frac{2\pi}{n} - \DeltaMin(t) \geq 0 , \\
\label{eq: Zmax}
\Zprocess_t := & \; \max_{1\leq j\leq n} \big( \theta^{j+1}_t - \theta^j_t \big)
- \frac{2\pi}{n} \geq 0 ,
\end{align}
where $\DeltaMin(t) := \DeltaMin_{\btheta_t} = \smash{\underset{1\leq j\leq n}{\min} \, \big| \theta_t^{j+1} - \theta_t^j \big|}$. 
Using the ODEs~\eqref{eqn:ODE_system} and \Cref{lem:phi-difference-estimate2}, we infer that 
\begin{align} \label{eqn:der_pos}
\tfrac{\ud}{\ud t}\big(\theta^{\jmin+1}_t - \theta^{\jmin}_t \big)
= \; & \newphi^{\jmin+1}(\btheta_t) - \newphi^\jmin(\btheta_t)  \geq \constphi \left(\pi - \tfrac{n}{2} \DeltaMin(t) \right) \geq 0 , 
\end{align}
for any index $\jmin \in \minindexset_t:= \underset{1 \leq k \leq n}{\argmin} \, \big| \theta^{k+1}_t-\theta^k_t \big|$. 
From this, we deduce that 
\begin{align} \label{eqn:Delta_derivative}
\tfrac{\ud}{\ud t} \DeltaMin(t) 
= \min_{\jmin\in \minindexset_t} \tfrac{\ud}{\ud t} \big( \theta^{\jmin+1}_t - \theta^{\jmin}_t \big) 
\geq 0 ,
\end{align}
so $t \mapsto \DeltaMin(t)$ is non-decreasing\footnote{Since it is continuous for all $t$ and differentiable almost everywhere, to argue that $t \mapsto \DeltaMin(t)$ is non-decreasing, it suffices to know that $\tfrac{\ud}{\ud t} \DeltaMin(t) \geq 0$ for each $t > 0$ where it exists.}.
(This also implies that any solution to~\eqref{eqn:ODE_system} with initial condition in $\chamber^\epsilon$ stays in $\chamber^\epsilon$ without hitting $\partial \chamber^\epsilon$, 
yielding another argument for \Cref{prop:ODE_existence_uniqueness}.)

Since the gaps $(\theta^{j+1}_t-\theta^j_t)$ sum up to $2\pi$, we see that $\Zprocess_t \leq (n-1)\zprocess_t$, and thus,
\begin{align} \label{eq:d_maxZz}
d(\btheta_t) = \max \{ \zprocess_t, \Zprocess_t\}
\leq (n-1) \zprocess_t.
\end{align} 

From~\eqref{eqn:Delta_derivative}, we deduce that 
for almost every $t \ge 0$, there exists an index $\jmin$ such that 
\begin{align*}
\tfrac{\ud}{\ud t} \zprocess_t 
= -  \big(\newphi^{\jmin+1}(\btheta_t) - \newphi^{\jmin}(\btheta_t)\big) \le -  \constphi \, n \zprocess_t
\qquad \Longrightarrow \qquad
\zprocess_t \le \zprocess_0 \,  e^{-  \constphi nt}.
\end{align*} 
Applying~\eqref{eq:d_maxZz}, we obtain a bound for every gap (not only the smallest one): 
\begin{align} \label{eq: exp rate}
\Big| \tfrac{2\pi}{n} - \big( \theta^{j+1}_t-\theta^{j}_t \big) \Big| 
\leq (n-1) \zprocess_t
\leq (n-1) \zprocess_0 \,  e^{-  \constphi nt} , \qquad j \in \{1, \ldots, n\} .
\end{align}
As $t \to \infty$, the righthand side approaches zero exponentially fast with exponential rate $\constphi n$,  yielding~\eqref{eq:little_d_conv} 
and concluding Step~1 of the proof. 

{\bf Step 2.} 
It remains to prove the convergence of $\btheta_t$ to the \emph{static} equally-spaced configuration~\eqref{eq:equally-spaced} as $t \to \infty$. 
Fix $\j \in \{1,\ldots, n\}$. We will show that $\smash{\underset{t\to \infty}{\lim} \, \theta^{\j}_t}$ exists.
From~\eqref{eq: exp rate}, 
\begin{align}\label{eqn:gapexpbound}
\tfrac{2\pi}{n} - (n-1) \zprocess_0 \, e^{- \constphi nt} \;\le\; \theta^{j+1}_t-\theta^{j}_t \;\le\; \tfrac{2\pi}{n} + (n-1)\zprocess_0\, e^{- \constphi nt} , \qquad j \in \{1, \ldots, n\} .
\end{align}
Fix $t_0$ such that $(n-1) \zprocess_0\, e^{-  \constphi nt_0} < \frac{2\pi}{n^2}$.
Then, applying~\eqref{eqn:gapexpbound} $k$ times, we finally obtain 
$(\theta^{\j}_t-\theta^{\j-k}_t) \in (0,\pi)$ and $(\theta^{\j+k}_t-\theta^{\j}_t) \in (0,\pi)$ for all $k \in \{1,2,\ldots,\lfloor (n-1)/2 \rfloor \}$ and $t \geq t_0$.
Next, using the property that $\pairderiv'<0$ (from~\eqref{eqn:deriv_bound_def}), 
 we see that for all $t\geq t_0$, 
\begin{align} \label{eq:cot_sandwich}
\pairderiv \big( \tfrac{2 k \pi}{n} + k(n-1)\zprocess_0\, e^{- \constphi n t} \big)
\;\leq\;\pairderiv (\theta^{\j}_t-\theta^{\j-k}_t) 
\;\leq\; \pairderiv \big( \tfrac{2 k \pi}{n} - k(n-1)\zprocess_0\, e^{- \constphi n t} \big) ,
\end{align}
and similarly for $\pairderiv \big(\theta^{\j+k}_t-\theta^{\j}_t\big)$. 
Since $\pairderiv$ is an odd function, if $n$ is odd, we thus obtain
\begin{align*}
\big| \tfrac{\ud}{\ud t}\theta^{\j}_t \big|  
\leq \; & 2\sum_{k=1}^{ (n - 1)/2 } \;\Big| 
\pairderiv (\theta^{\j}_t-\theta^{\j-k}_t) - \pairderiv (\theta^{\j+k}_t-\theta^{\j}_t) \Big|  \\
\leq \; & 2\sum_{k=1}^{(n - 1)/2 }
\Big| 
\pairderiv \big(\tfrac{2 k \pi}{n} - k (n-1)\zprocess_0 \, e^{- \constphi n t} \big)
- \pairderiv \big(\tfrac{2 k \pi}{n} + k(n-1)\zprocess_0 \, e^{- \constphi n t} \big) 
\Big|
&& \textnormal{[by~\eqref{eq:cot_sandwich}]}
\\
\leq \; &  4 \lipconst (n-1) \zprocess_0\, e^{- \constphi nt} \sum_{k=1}^{ (n - 1)/2 } k
\; = \; \frac{  \lipconst \, (n-1)^2(n-3)}{2} e^{- \constphi nt} 
\qquad \overset{t \to \infty}{\longrightarrow} \qquad 0.
\end{align*}
where $\lipconst := \underset{j,k \in \{1, \ldots, n\}}{\max} \, \underset{\btheta \in \chamberCl^{\pi/n}}{\max} \, |\partial_k \newphi^j(\btheta)| < \infty$.  
Similarly, when $n$ is even, we obtain 
\begin{align*}
\big| \tfrac{\ud}{\ud t}\theta^{\j}_t \big|  
\leq \; & 2 \, \underbrace{\big|\pairderiv (\theta^{\j}_t-\theta^{\j - n/2}_t)\big|}_{\leq \; \lipconst \frac{n(n-1)}{2} e^{-  \constphi nt}}
\; + \; \underbrace{2\sum_{k=1}^{ (n/2) - 1} \big| \pairderiv (\theta^{\j}_t-\theta^{\j-k}_t)
- \pairderiv (\theta^{\j+k}_t-\theta^{\j}_t) \big|}_{\leq \; \lipconst \frac{(n-1)(n-2)(n-4)}{2} e^{- \constphi nt}}  \quad 
\overset{t \to \infty}{\longrightarrow} \quad 0 ,
\end{align*}
where the bound on $|\pairderiv (\theta^{\j}_t-\theta^{\j - n/2}_t)|$
in the last line comes from taking the absolute values in~\eqref{eq:cot_sandwich}.
(When $n=2$, there is no second term, since the upper index of summation is $0$.)
In particular, we see that there exists a universal constant $c \in (0,\infty)$ such that
\begin{align*}
\int_{t_0}^\infty|\tfrac{\ud}{\ud t}\theta^{\j}_t|\ud t \leq c n^3 e^{- 2
\constphi n t_0} < \infty,
\end{align*}
which shows that, first of all, $\underset{t \to \infty}{\lim} \theta^{\j}_t$ exists and is given by~\eqref{eq:equally-spaced} for some $\zeta \in \bR$, 
and second of all, the convergence happens with exponential rate $\constphi n$.
This concludes the proof.
\end{proof}

\begin{remark}\label{rem:equilibrium}
Equations~(\ref{eqn:der_pos},~\ref{eqn:Delta_derivative}) in the proof imply that if $\DeltaMin(t) < 2\pi/n$, then $\frac{\ud}{\ud t} \DeltaMin(t) > 0$, while if $\DeltaMin(t) = 2\pi/n$, then $\DeltaMin(t)$ stays constant after time $t$, since $\frac{\ud}{\ud t} \DeltaMin(t) = 0$. 
\end{remark}

\begin{ex}[Multiradial energy with spiral]\label{rem:spiral_minimizers}
If $\nBesselSpiral_\bigT(\btheta) = 0$, then we get instead 
\begin{align*}
\tfrac{\ud}{\ud t}\zprocess_t = 
-\big(\big( \newphinrad ^{j+1}(\btheta_t) + \spr\big) - ( \newphinrad^{j}(\btheta_t) + \spr)\big) \;=\; -  \big(\newphinrad^{j+1}(\btheta_t) - \newphinrad^j(\btheta_t)\big) \;\le\; - n \zprocess_t,
\end{align*}
with $\newphinrad^{j}=2\phi^j$ as in~\eqref{eq:DBM_choices} and $\constphi = \ONE$ in this example.
Hence, the differential equation
\begin{align*}
\tfrac{\ud}{\ud t} \theta^j_t = 2 \phi^j(\btheta_t) + \spr
\end{align*} 
has a unique solution for each initial configuration $\btheta_0 \in \chamber$, and it satisfies
\begin{align*}
\lim_{t \to \infty} (\btheta_t - \spr t) = \big(\zeta, \zeta + \tfrac{2\pi}{n}, \dots, \zeta + \tfrac{(n-1)2\pi}{n}\big),
\end{align*}
for some $\zeta \in \bR$, where the convergence is exponentially fast with exponential rate $n$.
\end{ex}

\subsection{Finite-energy systems for symmetric separately convex potentials}
\label{subsec:Finite-energy-results}

We will now show that any function with finite Dyson-Dirichlet energy converges to an equally-spaced system in the long run. 
However, if the energy is non-zero, it is possible that the convergence rate is very slow and that the system continues slow rotation for all time. 
(Compare to \Cref{prop:limit_equally_spaced_zero} for zero-energy systems, and see \Cref{rmk:ncopies_of_finite_energy}.)

\begin{prop}\label{prop:finite_energy}
Consider a function $\btheta \in \BesselSpace{\infty)}{\btheta_0}$. 
Suppose $\newphi^j$ satisfy~\eqref{eqn:deriv_bound_def} for each $j$.
If $\nBessel(\btheta) < \infty$, then we have
\begin{align} \label{eq: gaps vanish}
\lim_{t \to \infty}(\theta^{j+1}_t-\theta^j_t) = \frac{2\pi}{n} , \qquad \textnormal{for all } j \in \{1, \ldots, n\}.
\end{align}
\end{prop}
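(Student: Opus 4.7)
The plan is to extend the analysis of \Cref{prop:limit_equally_spaced_zero} to the finite-energy setting by treating the nonzero drift as an $L^2$-integrable perturbation. Set
\begin{align*}
V^j_t := \tfrac{\ud}{\ud t}\theta^j_t - 2\phi^j(\btheta_t), \qquad j = 1, \ldots, n,
\end{align*}
which is well defined a.e.\ because $\nBessel(\btheta) < \infty$ forces $\btheta$ to be absolutely continuous on every compact interval (by \Cref{def:multiradial_Dirichlet_energy}); moreover $\sum_j \int_0^\infty |V^j_s|^2 \ud s = 2\nBessel(\btheta) < \infty$. I will control the quantity $z_t := \tfrac{2\pi}{n} - \Delta(t)$, where $\Delta(t) := \min_k (\theta^{k+1}_t - \theta^k_t)$; note $z_t \ge 0$ since the $n$ gaps are nonnegative and sum to $2\pi$.

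The first step is to derive the differential inequality
\begin{align*}
\tfrac{\ud}{\ud t} z_t \;\le\; -n\, z_t + \eta(t) \quad \textnormal{a.e.,} \qquad \textnormal{with} \quad \eta(t) := \Bigl(4 \sum_k |V^k_t|^2\Bigr)^{1/2} \in L^2([0,\infty)).
\end{align*}
At any point of differentiability of $\Delta$, a Danskin-type identity for the derivative of a pointwise minimum gives $\Delta'(t) = \tfrac{\ud}{\ud t}(\theta^{j+1}_t - \theta^j_t)$ for some $j \in \argmin_k(\theta^{k+1}_t - \theta^k_t)$; then \Cref{lem:phi-difference-estimate} yields $2\bigl(\phi^{j+1}(\btheta_t) - \phi^j(\btheta_t)\bigr) \ge 2\pi - n\Delta(t) = n z_t$, and rearranging produces the claim with the pointwise bound $|V^{j+1}_t - V^j_t| \le \eta(t)$.

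The second step is a Gr\"onwall/variation-of-constants argument: $z_t \le z_0 e^{-nt} + \int_0^t e^{-n(t-s)}\eta(s)\ud s$. Given $\varepsilon > 0$, choose a cutoff $T$ with $\|\eta\|_{L^2([T,\infty))} < \varepsilon$. The contribution from $[0,T]$ is dominated by $e^{-n(t-T)}\int_0^T \eta$, which vanishes as $t \to \infty$ for fixed $T$; Cauchy--Schwarz bounds the contribution from $[T,t]$ by $\tfrac{1}{\sqrt{2n}}\|\eta\|_{L^2([T,\infty))} < \varepsilon/\sqrt{2n}$. Taking $\varepsilon \to 0$ shows $z_t \to 0$, i.e., $\min_k(\theta^{k+1}_t - \theta^k_t) \to 2\pi/n$. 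Since all gaps are nonnegative and sum to $2\pi$, the maximum gap is bounded by $2\pi - (n-1)\min$ and therefore converges to the same limit; each individual gap is squeezed between the min and the max and hence also tends to $2\pi/n$, proving~\eqref{eq: gaps vanish}.

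The main obstacle is making the differential inequality for $z_t$ rigorous: $\Delta$ is only locally Lipschitz (a pointwise minimum of absolutely continuous functions), and the active index set $\argmin_k(\theta^{k+1}_t - \theta^k_t)$ can jump with $t$, so one must carefully justify the Danskin-type identity at a.e.\ point of differentiability (working with the lower Dini derivative and appealing to the fact that $\Delta$ is differentiable a.e.). Once this is in hand, the remaining steps are routine.
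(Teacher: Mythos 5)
Your proof is correct, but it takes a genuinely different route from the paper's own proof of this proposition. The paper proceeds in two stages: first it shows that the superlevel set $\Dset(\delta) := \{t : \zprocess_t \geq \delta\}$ has finite Lebesgue measure (via a Chebyshev-type argument combining \Cref{lem:phi-difference-estimate}, the triangle inequality, and the square-integrability of both $\tfrac{\ud}{\ud t}\theta^j - 2\phi^j$ and $\tfrac{\ud}{\ud t}\theta^j$, the latter requiring \Cref{cor:infinite-energy-characterization}); second it rules out unboundedness of $\Dset(\delta)$ by a contradiction argument in which the shrinking excursion intervals $(s_{(k)}, t_{(k)}]$ would force an oscillation of some $\theta^j$ that is incompatible with finite Dirichlet energy. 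You instead prove a single Gr\"onwall estimate $\zprocess_t \le \zprocess_0 e^{-nt} + \int_0^t e^{-n(t-s)}\eta(s)\,\ud s$ with $\eta \in L^2$ and then kill the convolution tail by Cauchy--Schwarz. Your differential inequality is exactly the one the paper derives separately in \Cref{prop:finite_time_finite_energy_energy_slowconv} (note $2\sqrt{2}\sqrt{\partial_s\nBessel_s(\btheta)} = \eta(s)$ in your notation), so in effect you are combining the paper's rate estimate with the observation that an $L^2$ forcing driven through an exponentially decaying kernel vanishes at infinity. Your argument is cleaner and unifies the two propositions; it also avoids invoking \Cref{cor:infinite-energy-characterization}, since you never need $\nDenergy(\btheta) < \infty$ separately. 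The ``Danskin-type identity'' you flag as the main obstacle is in fact unproblematic and the paper itself uses it repeatedly (in the derivations of \eqref{eqn:Delta_derivative} and \eqref{eq:zt-differential-equation}): since $\btheta$ is absolutely continuous, each gap $g_k := \theta^{k+1} - \theta^k$ is absolutely continuous, hence so is $\Delta = \min_k g_k$, and at a.e.\ $t$ where $\Delta$ and all $g_k$ are differentiable, the two one-sided difference quotients of the inequality $\Delta \le g_j$ (with equality at $t$, for any argmin index $j$) squeeze $\Delta'(t) = g_j'(t)$.
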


Thus, for finite-energy systems the points $e^{\ii  \theta^1_t}, \ldots, e^{\ii \theta^n_t}$ eventually approach equal spacing around the circle
--- but, in contrast to \Cref{prop:limit_equally_spaced_zero}, it is not true that a system with finite energy necessarily converges to a \emph{static} equally-spaced configuration:

\begin{ex}\label{rmk:ncopies_of_finite_energy} 
Consider the system defined by $n$ equally-spaced copies of a single driver~$\theta$, 
\begin{align*} 
\btheta_t = \big( \theta_t, \, \theta_t + \tfrac{2\pi}{n}, \, \ldots , \, \theta_t + \tfrac{(n-1)2\pi}{n} \big) , \qquad t \geq 0 ,
\end{align*}
so that $\newphi^j(\btheta_t)=0$ for every $j$ and $t$.
If $\theta \in \BesselSpaceSingle{\infty)}{\theta_0}$ has finite Dirichlet energy 
$\nDenergy(\theta)<\infty$, then $\btheta$ has finite multiradial Dirichlet energy:
\begin{align*} 
\nBessel(\btheta) 
= \frac{1}{2} \int_0^\infty \sum_{j=1}^n \big| \tfrac{\ud }{\ud s} \theta^j_s - \newphi^j(\btheta_s) \big|^2 \, \ud s
= \frac{1}{2} \int_0^\infty \sum_{j=1}^n \big| \tfrac{\ud }{\ud s} \theta^j_s \big|^2 \, \ud s
= n\, \nDenergy(\theta) <\infty.
\end{align*}
However, this system may slowly spiral, for example if $\theta_t = \log (t+1)$.
\end{ex}

\begin{proof}[Proof of \Cref{prop:finite_energy}]
As in the proof of \Cref{prop:limit_equally_spaced_zero}, now under the assumption that $\nBessel(\btheta) < \infty$, 
the asserted limit \eqref{eq: gaps vanish} will follow by proving the convergence~\eqref{eq:little_d_conv} (in this case with unspecified rate).
By the observation in Equation~\eqref{eq:d_maxZz}, it actually suffices to show that the quantity $\zprocess_t$ defined in~\eqref{eq: zmin} approaches zero as $t \to \infty$.
To this end, we will first show that $\nBessel(\btheta) < \infty$ implies  
\begin{align} \label{claim:finite_meas}
\Dset (r) := \big\{ t \in [0,\infty) \colon \zprocess_t \geq r \big\} \quad \textnormal{ has finite Lebesgue measure for any $r > 0$.}
\end{align}
Thereafter, we will show that if $\nBessel(\btheta) < \infty$, then $\Dset(r)$ is a bounded set for every $r > 0$. 
This is equivalent with $\underset{t \to \infty}{\limsup} \zprocess_t \le r$ for every $r > 0$. 
As $\zprocess$ is non-negative, by taking $r \to 0$ we may then conclude that $\underset{t \to \infty}{\lim} \zprocess_t = 0$, as desired.

Fix $r > 0$. On the one hand, 
\Cref{lem:phi-difference-estimate2} implies that if $\zprocess_t \geq r$, then $\smash{2 \, \underset{1\leq j\leq n}{\max}  | \newphi^j(\btheta_t) | \geq \constphi n r}$. 
On the other hand, the triangle inequality yields
\begin{align}\label{eqn:bd_phi}
2 \max_{1\leq j\leq n}  | \newphi^j(\btheta_t) | 
\; \leq \; 
\max_{1\leq j\leq n}  | \tfrac{\ud}{\ud t} \theta^j_t -  \newphi^j(\btheta_t) |  
\; + \; \max_{1\leq j\leq n}  |  \tfrac{\ud}{\ud t} \theta^j_t | .
\end{align}
Hence, if $\zprocess_t \geq r$, then at least one term on the righthand side of~\eqref{eqn:bd_phi} is greater than or equal to $\constphi \frac{n}{2} r$.
This allows us to bound the Lebesgue measure $\meas{\Dset (r)}$ of the set $\Dset(r)$ as
\begin{align} \label{eqn:bd_meas}
\meas{\Dset (r)} \leq \; & \meas{S} + \meas{R} , \\[.5em]
\nonumber
\textnormal{where}
\qquad
S := 
\Big\{ t \in [0,\infty) \colon\; &  \max_{1\leq j\leq n} | \tfrac{\ud}{\ud t} \theta^j_t -  \newphi^j(\btheta_t) |  \geq \constphi \tfrac{n}{2} r \Big\} , \\
\nonumber
R := \Big\{ t \in [0,\infty) \colon\; &  \max_{1\leq j\leq n} |  \tfrac{\ud}{\ud t} \theta^j_t | \geq \constphi \tfrac{n}{2} r \Big\} .
\end{align}
To bound the righthand side of~\eqref{eqn:bd_meas}, we note that each term on the righthand side of~\eqref{eqn:bd_phi} is square-integrable (for all time):
\begin{align*} 
\int_0^\infty \Big( \max_{1\leq j\leq n} \big| \tfrac{\ud}{\ud s} \theta^j_s -  \newphi^j(\btheta_s) \big| \Big)^2 \, \ud s
\leq \; &
\int_0^\infty \sum_{j=1}^n \big| \tfrac{\ud}{\ud s} \theta^j_s -  \newphi^j(\btheta_s) \big|^2 \, \ud s 
= 2\nBessel(\btheta) , \\
\int_0^\infty \Big(\max_{1\leq j\leq n} \big| \tfrac{\ud}{\ud s} \theta^j_s \big| \Big)^2 \, \ud s
\leq \; &
\int_0^\infty \sum_{j=1}^n \big| \tfrac{\ud}{\ud s} \theta^j_s \big|^2 \, \ud s 
= 2 \nDenergy(\btheta) ,
\end{align*}
so that
\begin{align*}
\meas{\Dset (r)} \leq 
\frac{8}{\constphi^2 n^2r^2} \, \big( \nBessel(\btheta) + \nDenergy(\btheta)  \big) < \infty ,
\end{align*}
as $\nBessel(\btheta)<\infty$ by assumption and $\nDenergy(\btheta)<\infty$ by \Cref{cor:infinite-energy-characterization}.
This verifies~\eqref{claim:finite_meas}.

Next, suppose $\Dset(r)$ is unbounded. Then, there exists and a sequence $(t_{(k)})_{k \in \bN}$ such that $t_{(k)} \smash{\overset{k \to \infty}{\longrightarrow}} \infty$ %as $k \to \infty$ 
and $\zprocess_{t_{(k)}} \ge r$ for all $k$.
Since $\meas{\Dset (\frac{r}{2})} < \infty$, we may assume (passing to a subsequence if necessary) that 
on each interval $(t_{(k)}, t_{(k+1)})$, the function $\zprocess_t$ exits $\Dset(\frac{r}{2})$. 
Set 
\begin{align*}
s_{(k)} := \max  \{ 0 \leq t \leq t_{(k)} \colon \zprocess_t = \tfrac{r}{2} \} , \qquad k \in \bN .
\end{align*}
Since the set $\Dset(\frac{r}{2})$ has finite Lebesgue measure, the length of the intervals $(s_{(k)}, t_{(k)}]$ approaches zero 
as $k\to \infty$, so for any $\epsilon>0$ we can find an index $k_\epsilon$ such that
\begin{align*}
| t_{(k_\epsilon)}-s_{(k_\epsilon)} | < \epsilon.
\end{align*}
By construction, for every $k \in \bN$ and 
$i_k \in \underset{1 \leq j \leq n}{\argmin} \, \big| \theta^{j+1}_{t_{(k)}}-\theta^j_{t_{(k)}} \big|$ 
we also have 
\begin{align*}
\tfrac{r}{2} \;\le\;  \zprocess_{t_{(k)}} - \zprocess_{s_{(k)}} 
= \; & \min_{1\leq j\leq n} \big( \theta^{j+1}_{s_{(k)}}  - \theta^j_{s_{(k)}}  \big)
- \min_{1\leq j\leq n} \big( \theta^{j+1}_{t_{(k)}} - \theta^j_{t_{(k)}} \big)
\\
\leq \; & \big| \theta^{i_k+1}_{s_{(k)}} - \theta^{i_k+1}_{t_{(k)}} \big|
+ \big| \theta^{i_k}_{t_{(k)}} - \theta^{i_k}_{s_{(k)}} \big| .
\end{align*}
Hence, we see that there exists an index $j$ such that 
$\big| \theta^j_{t_{(k_\epsilon)}} - \theta^j_{s_{(k_\epsilon)}} \big| \geq r/4$.
We thus obtain (using also the Cauchy–Schwarz inequality)
\begin{align*}
\infty 
\; > \; \nDenergy(\btheta) 
\; \geq \;  \frac{1}{2} \int_{s_{(k_\epsilon)}}^{t_{(k_\epsilon)}} \big|\tfrac{\ud}{\ud u}\theta^j_u\big|^2 \ud u 
\; \geq \; \frac{\big| \theta^j_{t_{(k_\epsilon)}} - \theta^j_{s_{(k_\epsilon)}} \big|^2}{2 |t_{(k_\epsilon)}-s_{(k_\epsilon)}|} 
\; \geq \; \frac{r^2}{32\epsilon} 
\quad \overset{\epsilon \to 0}{\longrightarrow} \quad \infty ,
\end{align*}
which is a contradiction. 
This shows that $\Dset(r)$ is bounded for every $r > 0$ 
and, in particular, that~\eqref{eq: gaps vanish} holds --- and the proof is complete. 
\end{proof}

\begin{remark}\label{rem:conv_optimal} 
In contrast with \Cref{prop:limit_equally_spaced_zero}, finite-energy systems do not necessarily enjoy an exponential rate of convergence to the equally-spaced configuration~\eqref{eq: gaps vanish}. 
In~fact, as the next \Cref{ex:poly_conv} shows, it is possible to construct systems of arbitrarily small energies with polynomial convergence rates. 
\end{remark}

\begin{ex}\label{ex:poly_conv}
Let us consider the case of two drivers, $n = 2$. Let $f \colon [0,\infty) \to \bR$ be a continuous $L^2$-function, and suppose $\theta^1, \theta^2$ satisfy the differential equations
\begin{align*}
\begin{cases}
\tfrac{\ud}{\ud t}\theta^1_t = 2\cot\Big(\frac{\theta^1_t-\theta^2_t}{2}\Big) - f(t),\\
\tfrac{\ud}{\ud t}\theta^2_t = 2\cot\Big(\frac{\theta^2_t-\theta^1_t}{2}\Big) , \qquad t \geq 0 ,
\end{cases}
\end{align*}
with initial configuration $\btheta_0 = (\theta^1_0, \theta^2_0) = (0, \pi)$ and potential as in~\eqref{eq:DBM_choices}.
Then, the function $\btheta = (\theta^1, \theta^2) \in  \BesselSpace{\infty)}{\btheta_0}$ has multiradial Diriclet energy
\begin{align*}
\nBessel_{\bigT}(\btheta) = \frac{1}{2}\int_0^\bigT f(s)^2 \ud s,
\end{align*}
and $u_t := (\theta^2_t-\theta^1_t)-\pi$ satisfies the differential equation
\begin{align*}
\tfrac{\ud}{\ud t}u_t = 4\cot\bigg(\frac{u_t+\pi}{2}\bigg) + f(t) , 
\qquad 
\textnormal{with initial configuration } 
u_0 = 0.
\end{align*}
Note that $d(\btheta_t) \ge |u_t|$. 
Consider a function $v \colon [0,\infty) \to \bR$ satisfying the IVP
\begin{align*}
\tfrac{\ud}{\ud t}v_t = -4v_t + f(t), 
\qquad 
v_0 = 0.
\end{align*}
Since $x \mapsto \cot(\frac{x+\pi}{2})$ is $1$-Lipschitz on $[0,\frac{\pi}{2}]$,  we have $u_t \ge v_t$ 
for all times before $v_t$ exits the interval $[0,\frac{\pi}{2}]$. 
Using the integrating factor $e^{-4t}$, we find that the solution is
\begin{align*}
v_t =  e^{-4t} \bigg( 1 + \int_0^t e^{4s} f(s)\, \ud s 
\bigg).
\end{align*}
Choosing $f(t) = \frac{\varepsilon}{t+1}$ for $\varepsilon \in (0,\frac{\pi}{2})$ gives rise to the function 
\begin{align*}
v_t =  e^{-4t} \bigg(1 + \varepsilon \int_0^t \frac{e^{4s}}{s+1} \, \ud s \bigg) ,
\end{align*}
which never exits $[0,\frac{\pi}{2}]$, since it satisfies 
\begin{align*}
e^{-4t} +\, \frac{\varepsilon(1-e^{-4t})}{4(t+1)} \leq v_t \leq e^{-4t} + \frac{\varepsilon}{4} (1-e^{-4t}).
\end{align*}
Thus, we find that 
\begin{align*}
d(\btheta_t) \, \ge \, u_t \, \ge \, v_t \, \ge \, \varepsilon \, \frac{1-e^{-4t}}{4(t+1)} \, = \, O(t^{-1}) .
\end{align*}
This gives a polynomial lower bound for the convergence rate 
to the equally-spaced configuration~\eqref{eq: gaps vanish}
for the system $\btheta = (\theta^1, \theta^2)$ having energy
\begin{align*}
\nBessel(\btheta) =\;& \int_0^\infty \frac{\varepsilon^2}{2(t+1)^2} \ud t = \frac{\varepsilon^2}{2},
\end{align*}
which can be made arbitrarily small by taking $\varepsilon \to 0$.
\end{ex}

Finally, we treat the convergence rate for systems with (locally) finite energy. 
For zero-energy systems, we recover the exponential rate of convergence of~\eqref{eq:little_d_conv} from the proof of \Cref{prop:limit_equally_spaced_zero}.

\begin{prop}\label{prop:finite_time_finite_energy_energy_slowconv}
Consider $\btheta \in \BesselSpace{\infty)}{\btheta_0}$. 
Suppose $\newphi^j$ satisfy~\eqref{eqn:deriv_bound_def} for each $j$. 
If $\nBessel_\bigT(\btheta) < \infty$ for every $\bigT \ge 0$, then 
\begin{align}\label{eq:equally-spaced-lower-estimate_d}
d(\btheta_t) \leq \; & (n-1) e^{-  \constphi nt}\bigg(2\sqrt{2}\int_0^t e^{  \constphi ns}\sqrt{\partial_s \nBessel_s(\btheta)}\ud s + d(\btheta_0)\bigg) , \qquad t \geq 0 ,
\\
\nonumber
\textnormal{where}
\qquad 
d(\btheta_t) := \; & \max_{1\leq j\leq n} \bigg| \frac{2\pi}{n} - ( \theta^{j+1}_t-\theta^j_t )  \bigg| .
\end{align}
\end{prop}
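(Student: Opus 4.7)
The strategy is a quantitative upgrade of the argument in Proposition~\ref{prop:limit_equally_spaced_zero}, keeping track of the ``drift error'' against the zero-energy dynamics. Define the error terms
\begin{align*}
\epsilon^j_t := \tfrac{\ud}{\ud t}\theta^j_t - 2\phi^j(\btheta_t), \qquad j=1,\ldots,n,
\end{align*}
which exist for almost every $t \geq 0$ since \Cref{cor:finite-energy-characterization} and the hypothesis $\nBessel_T(\btheta) < \infty$ for all $T$ make each $\theta^j$ absolutely continuous on $[0,\bigT]$. Directly from \Cref{def:multiradial_Dirichlet_energy}, at a.e.~$t$ we have the identity $\sum_{j=1}^n |\epsilon^j_t|^2 = 2\,\partial_t \nBessel_t(\btheta)$, which by the elementary inequality $\max_j|\epsilon^j_t|^2 \leq \sum_k |\epsilon^k_t|^2$ gives the key pointwise bound
\begin{align*}
\max_{1 \leq j \leq n}|\epsilon^j_t| \;\leq\; \sqrt{2\,\partial_t \nBessel_t(\btheta)}.
\end{align*}

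Next, I reintroduce $z_t := \frac{2\pi}{n} - \Delta(t)$ with $\Delta(t) := \min_j(\theta^{j+1}_t - \theta^j_t)$ as in~\eqref{eq: zmin}, together with $Z_t$ from~\eqref{eq: Zmax}. Since the gaps sum to $2\pi$, we have $Z_t \leq (n-1) z_t$, and consequently $d(\btheta_t) = \max\{z_t,Z_t\} \leq (n-1)z_t$ and $z_0 \leq d(\btheta_0)$. The function $t \mapsto \Delta(t)$ is absolutely continuous as a minimum of absolutely continuous functions, so a standard envelope (Danskin-type) argument shows that at a.e.~$t$,
\begin{align*}
\tfrac{\ud}{\ud t}\Delta(t) \;=\; \min_{i \in A_t}\Big[ 2\big(\phi^{i+1}(\btheta_t) - \phi^i(\btheta_t)\big) + (\epsilon^{i+1}_t - \epsilon^i_t) \Big],
\end{align*}
where $A_t = \argmin_k (\theta^{k+1}_t - \theta^k_t)$. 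For $i \in A_t$, \Cref{lem:phi-difference-estimate} yields $2(\phi^{i+1}(\btheta_t) - \phi^i(\btheta_t)) \geq 2\pi - n\Delta(t) = n z_t$, whence
\begin{align*}
\tfrac{\ud}{\ud t}\Delta(t) \;\geq\; n z_t - 2 \max_{1 \leq j \leq n}|\epsilon^j_t| \;\geq\; n z_t - 2\sqrt{2\,\partial_t \nBessel_t(\btheta)}.
\end{align*}

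Rewriting this as a linear differential inequality for the absolutely continuous function $z_t$,
\begin{align*}
\tfrac{\ud}{\ud t} z_t \;\leq\; -n z_t + 2\sqrt{2}\,\sqrt{\partial_t \nBessel_t(\btheta)},
\end{align*}
and multiplying by the integrating factor $e^{nt}$, the left-hand side becomes $\tfrac{\ud}{\ud t}(e^{nt} z_t)$. Integrating from $0$ to $t$ (valid for AC functions by a Gronwall-type argument) and using $z_0 \leq d(\btheta_0)$ together with $d(\btheta_t) \leq (n-1) z_t$ gives exactly the asserted estimate~\eqref{eq:equally-spaced-lower-estimate_d}.

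The main delicate point is justifying the a.e.~formula for $\tfrac{\ud}{\ud t}\Delta(t)$ when the argmin set $A_t$ is not a singleton. I would handle this via upper and lower Dini derivatives: the upper Dini bound $D^+\Delta(t) \leq \min_{i\in A_t} g'_i(t)$ follows since $\Delta(s) \leq g_i(s)$ for each $i \in A_t$, while the matching lower bound $D_+\Delta(t) \geq \min_{i\in A_t} g'_i(t)$ is obtained by observing that $A_s \subseteq A_t$ for small $s > t$ by continuity (a subsequence argument if needed). At points of differentiability of $\Delta$ these coincide, which is all that is needed to integrate the Gronwall inequality.
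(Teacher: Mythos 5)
Your proof is correct and follows essentially the same route as the paper's: derive the differential inequality $\tfrac{\ud}{\ud t} \zprocess_t \leq -n\zprocess_t + 2\sqrt{2}\sqrt{\partial_t \nBessel_t(\btheta)}$ via \Cref{lem:phi-difference-estimate} and the Cauchy--Schwarz bound $\max_j|\epsilon^j_t| \leq \sqrt{2\,\partial_t \nBessel_t(\btheta)}$, then integrate with the factor $e^{nt}$ and use $d(\btheta_t) \leq (n-1)\zprocess_t$ and $\zprocess_0 \leq d(\btheta_0)$. The only cosmetic difference is that you spell out the Dini-derivative justification for $\tfrac{\ud}{\ud t}\Delta(t)$, which the paper leaves implicit by reference to its earlier Equation~\eqref{eqn:Delta_derivative}.
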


\begin{proof}
We will show the slightly stronger claim for $\zprocess_t$~\eqref{eq: zmin} that 
\begin{align}\label{eq:equally-spaced-lower-estimate}
\zprocess_t \le e^{-  \constphi nt}\bigg(2\sqrt{2}\int_0^t e^{  \constphi ns}\sqrt{\partial_s \nBessel_s(\btheta)}\ud s + \zprocess_0\bigg) , \qquad t \geq 0 .
\end{align}
The asserted bound~\eqref{eq:equally-spaced-lower-estimate_d} then follows from~\eqref{eq:d_maxZz}.

Similarly as in~\eqref{eqn:Delta_derivative}, we deduce that for almost all times $t$, we have
\begin{align} \label{eq:zt-differential-equation}
\tfrac{\ud}{\ud t} \zprocess_t 
= \; & 
\big( \tfrac{\ud}{\ud t} \theta^{\jmin}_t - \newphi^{\jmin}(\btheta_t) \big) 
- 
\big( \tfrac{\ud}{\ud t} \theta^{\jmin+1}_t - \newphi^{\jmin+1}(\btheta_t) \big) 
-  \big( \newphi^{\jmin+1}(\btheta_t) - \newphi^{\jmin}(\btheta_t) \big) 
\end{align}
for some $\jmin \in \underset{1 \leq k \leq n}{\argmin} \, \big| \theta^{k+1}_t-\theta^k_t \big|$. 
It follows from  \Cref{lem:phi-difference-estimate2} that
\begin{align*}
 \big( \newphi^{\jmin+1}(\btheta_t) - \newphi^{\jmin}(\btheta_t) \big) \geq    \constphi n \zprocess_t ,
\end{align*}
and 
\begin{align*}
|\tfrac{\ud}{\ud t}\theta^{k}_t - \newphi^{k}(\btheta_t)| \le \Big( \sum_{i=1}^n \big|\tfrac{\ud}{\ud t} \theta^i_t - \newphi^i(\btheta_t)\big|^2 \Big)^{1/2} 
= \sqrt{2}\sqrt{\partial_t \nBessel_t(\btheta)} , \qquad k \in \{1,\ldots,n\} .
\end{align*}
Plugging these back to~\eqref{eq:zt-differential-equation} yields
\begin{align*}
\tfrac{\ud}{\ud t} \zprocess_t \le -  \constphi n \zprocess_t + 2\sqrt{2}\sqrt{\partial_t \nBessel_t(\btheta)},
\end{align*}
which implies~\eqref{eq:equally-spaced-lower-estimate} and concludes the proof.
\end{proof}

\bibliographystyle{alpha}

\end{document}